 \numberwithin{equation}{section}
 \newcommand{\beq}{\begin{equation}}
 \newcommand{\eeq}{\end{equation}}
 \newcommand{\beqs}{\begin{eqnarray*}}
 \newcommand{\eeqs}{\end{eqnarray*}}
 \newcommand{\beqn}{\begin{eqnarray}}
 \newcommand{\eeqn}{\end{eqnarray}}
 \newcommand{\beqa}{\begin{array}}
 \newcommand{\eeqa}{\end{array}}
 \def\lra{\longrightarrow}
 \def\bc{\begin{center}}
 \def\ec{\end{center}}
 \def\begeq{\begin{equation}}
 \def\endeq{\end{equation}}
 \def\and{\quad{\rm and}\quad}
 \let\lra=\longrightarrow
 \def\mapright\#1{\,\smash{\mathop{\lra}\limits^{\#1}}\,}
 \newtheorem{prop}{Proposition}[section]
 \newtheorem{theo}[prop]{Theorem}
 \newtheorem{lem}[prop]{Lemma}
 \newtheorem{cor}[prop]{Corollary}
 \newtheorem{rem}[prop]{Remark}
 \newtheorem{defi}[prop]{Definition}
 \newtheorem{conj}[prop]{Conjecture}
\begin{document}

  \title{ Tian's partial $C^0$-estimate implies Hamilton-Tian's conjecture}


 \author{Feng   Wang  and Xiaohua $\text{Zhu}^{*}$}

 \subjclass[2000]{Primary: 53C25; Secondary: 53C55,
 58J05, 19L10}
 \keywords {  Tian's partial $C^0$-estimate,  Hamilton-Tian's conjecture,  K\"ahler-Ricci flow, K\"ahler-Ricci solitons }

 \address{Department of Mathematics,  Zhejiang
 University, Hangzhou 310028, China.}

 \email{ wfmath@zju.edu.cn}

 \address{School of Mathematical Sciences, Peking
 University, Beijing 100871, China.}

 \email{ xhzhu@math.pku.edu.cn}

 \thanks {* Partially supported by NSFC Grants 11771019 and BJSF Grants Z180004.}

 \begin{abstract}
 In this paper, we prove the    Hamilton-Tian  conjecture for  K\"ahler-Ricci flow  based on a recent work   of Liu-Sz\'ekelyhidi on   Tian's  partical $C^0$-estimate for  poralized K\"ahler metrics with Ricci bounded  below. The Yau-Tian-Donaldson conjecture for the existence of K\"ahler-Einstein metrics  on Fano manifolds  will be also discussed.
 \end{abstract}

 \maketitle

  \date{}

  \section{Introduction}
 Let $(M, J, \omega_0)$ be a Fano manifold and $\omega_t$  a solution of normalized K\"ahler-Ricci flow,
 \begin{align}\label{kr-flow}
 \frac{\partial \omega_t}{\partial t}=-{\rm Ric}\,(\omega_t)+\omega_t,~\omega_0\in 2\pi c_1(M, J).
 \end{align}
 The Hamilton-Tian  conjecture asserts \cite{Tian97}:

  {\it Any sequence of $(M, \omega_t)$ contains a subsequence converging to a length space $(M_\infty,\omega_\infty)$
 in the Gromov-Hausdorff topology and $(M_\infty,\omega_\infty)$ is a smooth K\"ahler-Ricci soliton outside a closed subset $S$, called the singular set, of codimension at least $4$. Moreover, this subsequence of $(M, \omega_t)$ converges  locally to the regular part of $(M_\infty,\omega_\infty)$
 in the Cheeger-Gromov topology.}

  Recall that a K\"ahler-Ricci soliton on a complex manifold $M$ is a pair $( \omega, v)$, where $v$ is a holomorphic vector field on $M$ and $\omega$ is a K\"ahler metric on $M$, such that
 \begin{align}\label{kr-soliton}{\rm Ric}(\omega)\,-\,\omega\,=\,L_v \omega,
 \end{align}
 where $L_v$ is the Lie derivative along $v$. If $v=0$, the K\"ahler-Ricci soliton becomes a
 K\"ahler-Einstein metric. The uniqueness theorem in \cite{TZ1} states that a K\"ahler-Ricci
 soliton on a compact complex manifold, if it exists, must be unique modulo    auto-morphisms  group ${\rm Aut}(M)$.\footnote {In the case of K\"ahler-Einstein metrics, this uniqueness theorem is due to Bando-Mabuchi \cite{BM}.} Furthermore, $v$ lies in the center of Lie algebra of a reductive part of  ${\rm Aut}(M)$.

 The Gromov-Hausdorff convergence part in the Hamilton-Tian conjecture follows from Perelman's non-collapsing result and Zhang's upper volume estimate \cite{Zhq}. More recently, there were very significant progresses on this conjecture, first by Tian and Zhang in dimension less than $4$ \cite{TZhzh}, then by Chen-Wang \cite{Chwang} and Bamler \cite{Bam} in higher dimensions. In fact, Bamler proved a generalized version of the conjecture.

 The purpose of  paper is to give a   new  proof to  the Hamilton-Tian conjecture by using a recent result  of Liu-Sz\'ekelyhidi on  Tian's  partical $C^0$-estimate for  poralized K\"ahler metrics with Ricci bounded  below \cite{LS}.
 More precisely, we prove

   \begin{theo}\label{WZ} For any sequence of $(M, \omega_t)$ of  (\ref{kr-flow}),
  there is a subsequence $t_i\rightarrow \infty$  and a Q-Fano variety $\tilde M_\infty$ with klt singularities such that  $\omega_{t_i}$ is locally  $C^\infty$-convergent to a  K\"ahler-Ricci soliton  $\omega_\infty$ on  ${\rm Reg}(\tilde M_\infty)$ in the Cheeger-Gromov topology.  Moreover,    $\omega_\infty$ can be extended  to a  singular  K\"ahler-Ricci soliton  on $\tilde M_\infty$ with  a $L^\infty$-bounded K\"ahler potential $\psi_\infty$ and the completion of $({\rm Reg}(\tilde M_\infty), \omega_\infty)$ is isometric to the global limit  $(M_\infty, \omega_\infty')$ of  $\omega_{t_i}$  in the Gromov-Hausdorff topology.   In  addition,    if $\omega_\infty$ is a  singular  K\"ahler-Einstein  metrics,  $\psi_\infty$ is continuous and $M_\infty$   is homeomorphic to  $\tilde M_\infty$ which has  at least 4   Hausdroff codimension  of singularities  of  $(M_\infty, \omega_\infty')$.
  \end{theo}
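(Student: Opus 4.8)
The plan is to combine Perelman's a priori estimates for the normalized K\"ahler-Ricci flow with the partial $C^0$-estimate of Liu-Sz\'ekelyhidi \cite{LS} to produce an algebraic limit, and then to analyze the metric structure of that limit by pluripotential theory. First I would recall Perelman's bounds along (\ref{kr-flow}): the scalar curvature, the diameter, and the normalized Ricci potential $u_t$ (together with $|\nabla u_t|$) are uniformly bounded, and the flow is uniformly non-collapsed. Combined with Zhang's volume upper bound \cite{Zhq}, these yield, after passing to a subsequence $t_i\to\infty$, a Gromov-Hausdorff limit $(M_\infty,\omega_\infty',d_\infty)$, which is the metric limit appearing in the statement.

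The second and central step is to pass from metric to algebraic convergence. I would verify the hypotheses needed to apply the partial $C^0$-estimate of \cite{LS} along the flow --- a uniform lower Ricci bound together with the non-collapsing and the polarization $\omega_t\in 2\pi c_1(M)$ --- and then embed $(M,\omega_{t_i})$ into a fixed $\mathbb{P}^N$ by an orthonormal basis of pluri-anticanonical sections of $H^0(M,-mK_M)$ for a single large $m$. The partial $C^0$-estimate makes these embeddings uniformly nondegenerate, so the images are subvarieties of bounded degree, and by compactness of the relevant Hilbert scheme a further subsequence converges to a limit subvariety $\tilde M_\infty\subset\mathbb{P}^N$, which Tian's theory identifies as a normal $Q$-Fano variety. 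The regular part ${\rm Reg}(\tilde M_\infty)$ is matched with the smooth part of the Cheeger-Gromov limit, on which $\omega_{t_i}\to\omega_\infty$ in $C^\infty_{loc}$.

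The third step is to put the soliton structure on the limit. Perelman's monotonicity of the $\mu$-entropy along (\ref{kr-flow}), together with the uniform control of $u_t$, forces the Cheeger-Gromov limit on ${\rm Reg}(\tilde M_\infty)$ to be a gradient shrinking K\"ahler-Ricci soliton: the potentials $u_{t_i}$ converge to a limit whose gradient is the real part of a holomorphic vector field $v$, and passing to the limit in (\ref{kr-flow}) gives ${\rm Ric}(\omega_\infty)-\omega_\infty=L_v\omega_\infty$ there. To extend $\omega_\infty$ across the singularities I would write the soliton equation as a complex Monge-Amp\`ere equation for the K\"ahler potential $\psi_\infty$ relative to a fixed reference metric in $-mK_{\tilde M_\infty}$ and apply pluripotential theory on $Q$-Fano varieties to obtain a global $\omega$-plurisubharmonic solution; a Ko\l{}odziej-type estimate yields the $L^\infty$-bound on $\psi_\infty$, the resulting integrability of the soliton volume form shows the singularities are klt, and comparing the intrinsic distance of $({\rm Reg}(\tilde M_\infty),\omega_\infty)$ with $d_\infty$ (using that the singular set is small) gives that the metric completion is isometric to $(M_\infty,\omega_\infty')$.

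Finally, in the K\"ahler-Einstein case $v=0$ the Monge-Amp\`ere equation carries no drift term, so the Ko\l{}odziej estimate upgrades the $L^\infty$-bound on $\psi_\infty$ to continuity, and the Donaldson-Sun and Cheeger-Colding-Tian theory for non-collapsed Einstein limits shows that the singular set of $(M_\infty,\omega_\infty')$ has Hausdorff codimension at least $4$ and that $M_\infty$ is homeomorphic to $\tilde M_\infty$. The step I expect to be the main obstacle is the second one: securing along the flow the hypotheses of \cite{LS}, in particular the uniform lower Ricci bound, which is not preserved by (\ref{kr-flow}) in the naive pointwise sense, and then reconciling the purely algebraic Hilbert-scheme limit $\tilde M_\infty$ with the metric Gromov-Hausdorff limit $M_\infty$ so that the soliton equation holds globally in the weak pluripotential sense across the singular set. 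This matching of the algebraic and metric limits, and the regularity of $\psi_\infty$ it demands, is where the real work lies.
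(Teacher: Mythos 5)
Your outline reproduces the paper's high-level strategy (Perelman's estimates, the Liu--Sz\'ekelyhidi partial $C^0$-estimate, entropy monotonicity to force the soliton equation, pluripotential theory on the $Q$-Fano limit), but it leaves unresolved exactly the point on which the whole proof turns, and which you yourself flag as ``the main obstacle'': the evolved metrics $\omega_t$ do \emph{not} satisfy a uniform lower Ricci bound, so Theorem \ref{LS} and Theorem \ref{normal} cannot be applied to $(M,\omega_{t_i})$ directly, and your second step as written fails. The paper's fix (following Zhang \cite{ZhK}) is to introduce auxiliary metrics $\eta_t$ solving ${\rm Ric}(\eta_t)=\omega_t$, which exist by Yau's theorem \cite{Yau} applied to the complex Monge-Amp\`ere equation (\ref{yau-equation}); these have positive Ricci curvature and, by Perelman's estimates, satisfy the volume and diameter hypotheses, so \cite{LS} applies to them. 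The conclusions are then transferred back to $\omega_{t_i}$: the bound $\|h_t\|_{C^0}\le C$ on the Ricci potential and the Moser-iteration bound $|\kappa_t|\le A$ (via Zhang's Sobolev inequality) make the two $L^2$-inner products on $H^0(M,K_M^{-l})$ uniformly equivalent, hence the change-of-basis matrices $A(t_i)$ between the orthonormal bases for $\eta_{t_i}$ and for $\omega_{t_i}$ are uniformly bounded, which yields the partial $C^0$-estimate (\ref{partial-WZ}) and the uniformly bounded K\"ahler potentials for $\omega_{t_i}$ themselves. Without this device (or a substitute for it) there is no embedding, no limit variety $\tilde M_\infty$, and nothing for your later steps to act on.

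The same omission breaks your final step. You invoke ``Donaldson--Sun and Cheeger--Colding--Tian theory for non-collapsed Einstein limits'' to get the homeomorphism $M_\infty\cong\tilde M_\infty$ and the codimension-$4$ bound, but $(M_\infty,\omega_\infty')$ is a limit of the flow metrics $\omega_{t_i}$, which are neither Einstein nor uniformly Ricci-bounded, so that theory does not apply to it. In the paper, Cheeger--Colding theory is applied instead to the Gromov--Hausdorff limit $(X,d_\infty)$ of the \emph{auxiliary} metrics $\eta_{t_i}$ (Proposition \ref{corollary-eta}), and only in the K\"ahler--Einstein case is $(M_\infty,\omega_\infty')$ identified with $(X,d_\infty)$, via the observation that $\eta_\infty$ and $\omega_\infty$ then solve the same complex Monge-Amp\`ere equation and hence coincide by uniqueness of bounded solutions \cite{BT, EGZ}; this is why the theorem asserts the homeomorphism and codimension bound only in that case. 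Two smaller inaccuracies: the $L^\infty$-bound on $\psi_\infty$ does not need a Ko\l{}odziej estimate---it is immediate from the partial $C^0$-estimate, the potential being essentially $\frac{1}{l}\log\rho_l$; and the identification of the completion of $({\rm Reg}(\tilde M_\infty),\omega_\infty)$ with $(M_\infty,\omega_\infty')$ is not obtained from ``the singular set is small'' (that is known only a posteriori, and only in the KE case), but from a volume argument: the potential has full mass, so the volume concentrates on regions converging to ${\rm Reg}(\tilde M_\infty)$, and a strictly larger completion would force a region of definite volume to be lost, contradicting Perelman's non-collapsing estimate.
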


 Compared to the proofs in the two long  papers  \cite{Chwang} or \cite{Bam},
   our proof of  Theorem \ref{WZ} is  purely  analytic  by using the technique of  complex
 Monge-Amp\`ere equation.  In particular,  we  avoid to use the hard deeply Perelman's pseudo-locality theorem, which plays a critical role
  in  their  papers.
  The pseudo-locality theorem was established for the blow-up argument to the Hamilton's Ricci flow   by Perelman in his celebrated  paper \cite{Pe}. In   Theorem \ref{WZ}, we also obtain    a structure of   Q-Fano variety  with klt singularities to the Gromov-Hausdorff limit in  the Hamilton-Tian  conjecture.

 There are many  works on Tian's partial $C^0$-estimate \cite{Ti90, Ti12, DS, T2, Ti13, Ji13,  JWZ, Sz16, DS16}, etc.   This estimate
 has been a   powerful tool   in   solving   problems in K\"ahler geometry.  For examples,  Tian proved the Yau-Tian-Donaldson conjecture  for the existence of K\"ahler-Einstein metrics  on Fano manifolds by establishing  the partial $C^0$-estimate for conical  K\"ahler-Einstein metrics \cite{T2}.  Chen, Donaldson and Sun  gave an alternative proof  of  the conjecture,  independently  \cite{CDS}. The  conjecture  asserts
 that a Fano manifold  admits a K\"ahler-Einstein metric  if and only if it is $K$-polystable.
 Our proof of Theorem \ref{WZ} also shows that the partial $C^0$-estimate implies   Hamilton-Tian's conjecture.

 It is interesting in giving some remarks on the limit  $M_\infty$ in  Theorem \ref{WZ} (also see Proposition  \ref{two-equiv}).   In general, $M_\infty$ could not be smooth as   shown recently for  some examples  from K\"ahler-Ricci flow on Fano  compactifications of  Lie groups  \cite{LTZ}.  Moreover,  even for the smooth case of $M_\infty$,  the complex structure of  underlying  manifold may  jump up (cf. \cite{TZ4, SW, Wang}). However, to the  best  of   our   knowledge, there are very few references about the uniqueness of $M_\infty$ except   that $M_\infty$ is a  (smooth) K\"ahler-Einstein manifold  \cite{TZ1, TZ4, TZZZ}.  For  examples,  in  a special case that $M_\infty$ is a  smooth  manifold with a K\"ahler-Ricci soliton (but not a K\"ahler-Einstein metric) and a  different complex structure to $M$,  it is still unknown whether  $M_\infty$ depends on a sequence of evolved metrics or an initial metric of K\"ahler-Ricci flow.

 As an application of Theorem \ref{WZ}, we give an approach of Yau-Tian-Donaldson conjecture by using   K\"ahler-Ricci flow.

 \begin{cor}\label{WZ-2}  Let  $M$ be a $K$-polystable  Fano manifold.  Suppose that  the Mabuchi $K$-energy is bounded below.
  Then   $(M, \omega_t)$ in (\ref{kr-flow})  converges to a  K\"ahler-Einstein metric in $C^\infty$-topology after holomorphisms transformation.   In particular,   $M$ admits a K\"ahler-Einstein metric.
 \end{cor}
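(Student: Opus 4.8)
The plan is to apply Theorem \ref{WZ} along a carefully chosen sequence of times, use the boundedness of the $K$-energy to force the limiting soliton to be K\"ahler-Einstein, and then invoke $K$-polystability to prevent the complex structure from degenerating.

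First I would record the variation of the Mabuchi $K$-energy $\nu$ along (\ref{kr-flow}). Writing $\omega_t=\omega_0+\sqrt{-1}\partial\bar\partial\phi_t$ and letting $h_t$ be the normalized Ricci potential, so that ${\rm Ric}(\omega_t)-\omega_t=-\sqrt{-1}\partial\bar\partial h_t$ and $\dot\phi_t=h_t$ up to a constant, a standard computation gives
\[
\frac{d}{dt}\nu(\omega_t)=-\int_M|\nabla h_t|^2_{\omega_t}\,\omega_t^n\le 0 .
\]
Since $\nu$ is assumed bounded below, $\int_0^\infty\!\!\int_M|\nabla h_t|^2_{\omega_t}\,\omega_t^n\,dt<\infty$, so there is a sequence $t_i\to\infty$ with $\int_M|\nabla h_{t_i}|^2_{\omega_{t_i}}\,\omega_{t_i}^n\to 0$. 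Applying Theorem \ref{WZ} to $\{t_i\}$ (and passing to the subsequence it provides) produces a normal Q-Fano variety $\tilde M_\infty$ with klt singularities and a singular K\"ahler-Ricci soliton $(\omega_\infty,v)$, with local $C^\infty$-convergence on $\Reg(\tilde M_\infty)$.

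Next I would show $v=0$. Under the local smooth convergence the Ricci potentials $h_{t_i}$ converge to the holomorphy potential $h_\infty$ of $v$ on $\Reg(\tilde M_\infty)$, so that for every compact $K\subset\Reg(\tilde M_\infty)$,
\[
\int_{K}|\nabla h_\infty|^2_{\omega_\infty}\,\omega_\infty^n=\lim_i\int_{K}|\nabla h_{t_i}|^2_{\omega_{t_i}}\,\omega_{t_i}^n\le\liminf_i\int_M|\nabla h_{t_i}|^2_{\omega_{t_i}}\,\omega_{t_i}^n=0 .
\]
Since the singular set has Hausdorff codimension at least $4$, it follows that $\nabla h_\infty\equiv 0$, i.e.\ $v=0$ and $\omega_\infty$ is a singular K\"ahler-Einstein metric on $\tilde M_\infty$. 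By the last assertion of Theorem \ref{WZ}, $\psi_\infty$ is then continuous and $\tilde M_\infty$ (homeomorphic to the Gromov-Hausdorff limit $M_\infty$) is a normal Q-Fano variety carrying a weak K\"ahler-Einstein metric.

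It remains to identify $\tilde M_\infty$ with $M$. The degeneration produced by the flow realizes $\tilde M_\infty$ as the central fibre of a special test configuration $\mathcal M$ of $M$. Because $\tilde M_\infty$ admits a singular K\"ahler-Einstein metric its Futaki invariant vanishes, so the Donaldson-Futaki invariant of $\mathcal M$ is zero; since $M$ is $K$-polystable this forces $\mathcal M$ to be a product, whence $\tilde M_\infty\cong M$. As $M$ is smooth, $\omega_\infty$ is a genuine smooth K\"ahler-Einstein metric on $M$, which yields the final ``in particular'' assertion. Finally, K\"ahler-Einstein metrics are unique modulo $\Aut(M)$ (\cite{BM,TZ1}) and realize $\inf\nu$; since $\nu(\omega_t)$ decreases to $\inf\nu$, every subsequential limit is K\"ahler-Einstein on $M$, so after pulling back by a family $\sigma_t\in\Aut(M)$ the whole flow converges in $C^\infty$ to a K\"ahler-Einstein metric on $M$. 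The main obstacle is the third step: matching the analytic Gromov-Hausdorff limit of Theorem \ref{WZ} with an algebraic special test configuration and computing its Donaldson-Futaki invariant, so that the K\"ahler-Einstein notion of $K$-polystability can be applied to rule out a jump of the complex structure---a jump which, as the introduction stresses, does occur for genuine solitons but is excluded here by the combination of the two hypotheses.
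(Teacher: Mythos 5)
Your overall strategy parallels the paper's (apply Theorem \ref{WZ}, force the soliton vector field to vanish, then use $K$-polystability to rule out a jump of complex structure), and your route to $v=0$ is a legitimate and arguably more elementary variant: the paper instead quotes \cite[Proposition 4.2]{TZZZ}, i.e.\ (\ref{f-0}), that a lower bound on the $K$-energy forces the Perelman minimizers to satisfy $\|f_t\|_{C^0}\to 0$, and combines this with (\ref{h-f}) from Lemma \ref{limit-soliton}; your direct use of the $K$-energy monotonicity to select times with $\int_M|\nabla h_{t_i}|^2_{\omega_{t_i}}\,\omega_{t_i}^n\to 0$, together with locality of the Cheeger--Gromov convergence, reaches the same conclusion (and you do not even need the codimension-$4$ statement: constancy of $h_\infty$ on the connected regular part already gives ${\rm Ric}(\omega_\infty)=\omega_\infty$ there).

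However, the step you yourself flag as ``the main obstacle'' is a genuine gap, and it is exactly where the paper does real work. You assert that ``the degeneration produced by the flow realizes $\tilde M_\infty$ as the central fibre of a special test configuration of $M$.'' This is not automatic: what the flow produces is only that $\tilde M_\infty$ lies in the closure of the $SL(N+1,\mathbb C)$-orbit of the Tian embedding of $M$ in $\mathbb{CP}^N$ (as a limit of the cycles $\Phi_i(M)$), not that it is reached by a one-parameter subgroup, which is what is needed before one can even speak of a test configuration and its Futaki invariant. The paper fills this hole in two steps: since $\omega_\infty$ is singular K\"ahler--Einstein, Tian's generalized Matsushima theorem \cite{T2} shows ${\rm Aut}(\tilde M_\infty)$ is reductive, and a well-known GIT (Luna-slice type) argument then produces a genuine $C^*$-action $G_0=\{\sigma_t(v)\}\subset SL(N+1,\mathbb C)$ degenerating $M$ to $\tilde M_\infty$; Ding--Tian's generalized Futaki invariant $F_M(v)$ of this action vanishes because the central fiber is K\"ahler--Einstein \cite{DT}, so $K$-polystability forces $G_0$ to lie in ${\rm Aut}(M)$, i.e.\ $\tilde M_\infty\cong M$. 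Without the reductivity-plus-GIT input, your appeal to $K$-polystability has nothing to act on. A secondary gap: your closing sentence deduces convergence of the whole flow from ``every subsequential limit is K\"ahler--Einstein,'' which is not a proof; the paper handles this cleanly at the outset by invoking the stability theorem of \cite{Zhu}, which reduces the corollary to exhibiting a single sequence $\Psi_i^*\omega_{t_i}$, $\Psi_i\in{\rm Aut}(M)$, converging smoothly to a K\"ahler--Einstein metric on $M$.
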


 By work of  Li-Sun \cite{LS},  the $K$-semistability   is equivalent to the lower bound of $K$-energy.  However, their proof  shall depend on the proof of  Yau-Tian-Donaldson's conjecture in  \cite{T2} or  \cite{CDS}.  Thus if there is a proof of K\"ahler-Ricci flow for Li-Sun's result, Corollary \ref{WZ-2}   will give  a proof  of Yau-Tian-Donaldson conjecture by  K\"ahler-Ricci flow.

 The paper is organized as follows.   In Section 2, we  first recall    Liu-Sz\'ekelyhidi's  results on  Tian's  partical $C^0$-estimate (cf. Theorem \ref{LS} and Theorem \ref{normal}), then we apply them to the  evolved metrics $\omega_t$  of (\ref{kr-flow})   to get a  metrics  comparison to  the  induced metrics by the Fubini-Study  metric (cf. Proposition \ref{q-fano-limit} and
 Proposition \ref{metric-equivalent}).  Section 3 is devoted to get a local  higher regularity  for K\"ahler potentials  (cf. Proposition \ref{regularity-phi}). Theorem \ref{WZ} and Corollary \ref{WZ-2}  will be proved in Section 4.

 \vskip3mm

 \noindent {\bf Acknowledgements.}We would like to thank professor Gang Tian for inspiring conversations.

 \section{Partial $C^0$-estimate}

 \subsection{Liu-Sz\'ekelyhidi's work}
 Let $(M, L, \omega)$ be a polarized manifold such that $\omega$ is a K\"ahler metric in $2\pi c_1(L)$. Choose a hermitian metric $h$ on $L$ such that ${\rm R }(h)=\omega.$ Then for any positive integer $l$, we have the $L^2$-metric on $H^0(M,L^{l}, \omega)$:
 \begin{align}\label{inner-product}
 ( s_1,s_2)=\int_M\langle s_1,s_2\rangle_{h^{\otimes l}}\omega^n,
 \end{align}
 for $s_1, s_2\in H^0(M,L^{\otimes l} )$.

 \begin{defi}
 We denote this linear space $H^0(M,L^{l},\omega)$  with inner product by  (\ref{inner-product}).
 For any orthonormal basis $\{s^\alpha\} ~({0\leq \alpha\leq N=N(l)})$ of $H^0(M,L^{l},\omega)$, we define the Bergman kernel as
 \begin{align}
 \rho_l(M,\omega)(x)\,=\,\Sigma_{i=0}^N \,|s^\alpha|^2_{h^{\otimes l}}(x), ~\forall~ x\in M.
 \end{align}
 \end{defi}

 Recently,   Liu-Sz\'ekelyhidi proved the following in \cite{LS},

 \begin{theo}\label{LS}
 Given $n,D,v>0$, there is a positive integer $l$ and a real number $b>0$ with the following property: Suppose that $(M,L,\omega)$ is a polarized K\"ahler manifold with $\omega\in 2\pi c_1(L)$ such that
  $$ {\rm Ric}\,(\omega)\geq  \omega, ~{\rm vol}(M,\omega )\geq v, ~{\rm diam}(M,\omega)\leq D.$$
  Then for any $x\in M$ it holds,
  \begin{align}\label{partial-c0}\rho_l(M,\omega)(x)\geq b.
  \end{align}
 \end{theo}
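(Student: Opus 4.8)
The plan is to argue by contradiction using Gromov--Hausdorff compactness together with H\"ormander's $L^2$-estimate for $\dbar$. Suppose the conclusion fails. Then, as $l$ ranges over the powers under consideration, one can find polarized K\"ahler manifolds $(M_i,L_i,\omega_i)$ satisfying the three hypotheses ${\rm Ric}(\omega_i)\ge\omega_i$, ${\rm vol}(M_i,\omega_i)\ge v$, ${\rm diam}(M_i,\omega_i)\le D$, together with points $x_i\in M_i$ along which the Bergman kernel degenerates, $\rho_l(M_i,\omega_i)(x_i)\to 0$. Because the Ricci curvature is bounded below by $1$ and the volume is bounded below while the diameter is bounded above, Bishop--Gromov excludes volume collapse, so by Gromov's compactness theorem and Cheeger--Colding theory a subsequence of $(M_i,\omega_i,x_i)$ converges in the pointed Gromov--Hausdorff topology to a non-collapsed limit $(Z,d_Z,x_\infty)$ whose regular set $\mathcal{R}$ is an open, dense K\"ahler manifold and whose singular set $\mathcal{S}$ has Hausdorff codimension at least $2$. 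The transfer mechanism is the lower semicontinuity of the Bergman kernel at a fixed level under this convergence, so it suffices to produce a definite lower bound on the limit at one uniformly controlled power.

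On $\mathcal{R}$ the line bundles $L_i$ with their Chern connections converge to a Hermitian holomorphic line bundle $(L_\infty,h_\infty)$ of curvature $\omega_\infty$, and the task reduces to producing, for $l$ large and fixed, a holomorphic section of $L_\infty^{l}$ of unit $L^2$-norm whose pointwise length at $x_\infty$ is bounded below by a constant depending only on $n$. I would construct this by the peak-section method: after passing to $x_\infty$ or a nearby regular point (using that $\mathcal{R}$ is dense), one analyzes the tangent cone, which is a K\"ahler cone, writes the model Gaussian-type section $e^{-l|z|^2/2}$ in a holomorphic chart, and cuts it off by a logarithmic cutoff supported away from $\mathcal{S}$ to obtain an \emph{approximately} holomorphic section $\tilde s$ with $|\tilde s(x_\infty)|$ of unit size and $\|\dbar\tilde s\|_{L^2}$ small. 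Here the curvature hypothesis is decisive: since the curvature of $L^{l}$ is $l\omega$ and ${\rm Ric}(\omega)\ge\omega$, the Bochner--Kodaira--Nakano Weitzenb\"ock term is bounded below by $(l+1)\omega>0$, so H\"ormander's estimate solves $\dbar u=\dbar\tilde s$ with $\|u\|_{L^2}\le C\,l^{-1}\|\dbar\tilde s\|_{L^2}$ small. Then $s:=\tilde s-u$ is genuinely holomorphic, and since $\dbar u$ vanishes near $x_\infty$ the sub-mean-value inequality for $|u|^2$ upgrades the $L^2$-smallness of $u$ to a small pointwise bound $|u(x_\infty)|$, leaving $|s(x_\infty)|$ bounded below.

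Transferring back, one approximates $s$ by honest holomorphic sections $s_i\in H^0(M_i,L_i^{l})$ with controlled $L^2$-norm and with $|s_i(x_i)|_{h_i^{l}}$ bounded below uniformly in $i$; since $\rho_l(M_i,\omega_i)(x_i)=\sup\{\,|\sigma(x_i)|^2_{h_i^{l}}:\|\sigma\|_{L^2}=1\,\}\ge |s_i(x_i)|^2/\|s_i\|_{L^2}^2\ge c(n)>0$, this contradicts $\rho_l(M_i,\omega_i)(x_i)\to 0$, and fixing such an $l$ produces the desired uniform $b$. The main obstacle is that one is given only a \emph{lower} Ricci bound rather than two-sided bounds, so the limit $Z$ need not be smooth and the classical peak-section construction does not apply directly: the delicate points are establishing, via Cheeger--Colding together with the complex structure, that the tangent cones are genuine K\"ahler cones on which the Gaussian model is almost holomorphic, and that the singular set is small enough (codimension $\ge 2$, hence negligible for the $L^2$ $\dbar$-theory) that the cutoff error $\|\dbar\tilde s\|_{L^2}$ can be made arbitrarily small without destroying the peak. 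Controlling these two issues uniformly along the sequence, independently of the particular $M_i$, is the heart of the argument.
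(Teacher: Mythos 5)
You should first note that the paper you are being compared against contains no proof of this statement at all: Theorem \ref{LS} is quoted verbatim from Liu--Sz\'ekelyhidi \cite{LS}, so the only meaningful comparison is with the argument in that reference. Your overall skeleton --- contradiction, Gromov--Hausdorff compactness of the class, H\"ormander's $L^2$-method to produce a peak section, transfer back to the approximating manifolds via the extremal characterization of $\rho_l$ --- does match the skeleton of \cite{LS} (and of Donaldson--Sun \cite{DS} before it), and your observation that the Bochner--Kodaira term $l\omega+{\rm Ric}\geq (l+1)\omega$ keeps the $\bar\partial$-estimate alive with only a lower Ricci bound is correct; that is indeed why the $L^2$-theory survives in this setting.

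The genuine gap is that the structural inputs you feed into this skeleton are precisely the statements that fail, or are unknown a priori, when one has only ${\rm Ric}\geq \omega$ rather than two-sided Ricci bounds, and they constitute the actual content of the theorem. You assert that the regular set $\mathcal R$ of the limit is an \emph{open}, dense K\"ahler manifold on which $(L_i,h_i)$ converge to a Hermitian holomorphic line bundle, and that tangent cones are K\"ahler cones carrying the Gaussian model section $e^{-l|z|^2/2}$. With only a lower Ricci bound there is no $\varepsilon$-regularity theorem, hence no $C^{1,\alpha}$ or smooth convergence anywhere; the regular set of a Ricci-limit space is dense but not known to be open (its openness in the polarized K\"ahler case is a \emph{conclusion} of \cite{LS}, so assuming it is circular); and Cheeger--Colding theory makes tangent cones mere metric cones, with no a priori complex --- let alone K\"ahler cone --- structure, so there is no model section to cut off and no identification map along which to transplant it or to carry the final step $s\mapsto s_i$. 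These are exactly the inputs Donaldson--Sun had in the Einstein case, and their absence is the whole difficulty here. Liu--Sz\'ekelyhidi circumvent them by never working holomorphically on the limit or its cones: all $\bar\partial$-constructions are performed on the smooth manifolds $M_i$ themselves, with weights and almost-holomorphic data built from Cheeger--Colding almost-splitting functions and heat-kernel estimates (their ``holomorphic splitting maps''/local holomorphic charts), organized by an induction over scales and strata to locate a scale at which H\"ormander's estimate yields the peak section. A smaller, fixable defect: your contradiction setup quantifies over $l$ loosely, whereas the theorem asserts a single $l$ uniform over the whole class, so the compactness argument must be arranged as in \cite{DS}, proving the estimate in a neighborhood of each possible limit and then invoking compactness of the space of limits.
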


 Such an inequality  (\ref{partial-c0}) was called the partial $C^0$-estimate by Tian \cite{Ti90, Ti90a, Ti90b, Ti12}.  The upper bound of
 $\rho_l(M,\omega)$ can be also obtained by using the standard Moser iteration (for examples, see \cite[Lemma 3.2]{JWZ}).
  By (\ref{partial-c0}),  we can write $\omega$ as a metric with bounded  K\"ahler potential using the Fubini-Study metric as the background metric. In fact, if  the orthonormal basis $\{s^\alpha\}$ $(0\leq \alpha\leq N)$ defines an embedding $\Phi$,  then we have
   $$\omega=\Phi^*(\frac{1}{l}\omega_{FS})- \frac{1}{l}\sqrt{-1}
   \partial \bar\partial \log \rho_l(M,\omega).$$
    By the gradient estimate of $s^\alpha$   (cf. \cite{Ti90a, DS, Ti13}) and the lower bound  (\ref{partial-c0})  of $\rho_l(M,\omega)$,  it holds that
 \begin{align}\label{lower-bound-metric}\Phi^*(\frac{1}{l}\omega_{FS})\leq C(n,D,v) \omega.
 \end{align}
 In fact, we have
 $$\Phi^*(\omega_{FS})=\sqrt{-1}\frac{\sum_{\alpha=0}^N\langle \nabla s^\alpha,\nabla s^\alpha\rangle}{\rho_l(M,\omega)}-\sqrt{-1}\frac{(\sum_{\alpha=0}^N\langle \nabla s^\alpha, s^\alpha\rangle)
 (\sum_{\alpha=0}^N\langle s^\alpha,\nabla s^\alpha\rangle)}{\rho^2_l(M,\omega)}.$$

 Based on the partial $C^0$-estimate,     Liu-Sz\'ekelyhidi  also proved \cite{LS},

 \begin{theo}\label{normal}
 Given $n,d,v>0$, let $(M^n_i, L_i, \omega_i)$ be a sequence of polarized K\"ahler manifolds such that
 \begin{itemize}
     \item $L_i$ is a Hermitian holomorphic line bundle with hermitian metric $h_i$ such that $R(h_i)=\omega_i$;

     \item ${\rm Ric}(\omega_i) \geq -\omega_i$ and

   \begin{align}\label{vol-diamerter-condition} {\rm vol}(M_i, \omega_i)>v, ~{\rm diam}(M_i,\omega_i) <d;
    \end{align}
   \item The sequence $(M^n_i ,\omega_i)$ converges  to a limit metric space $(X, d_\infty)$  in the Gromov-Hausdorff topology.
 \end{itemize}
  Then $X$ is homeomorphic to a normal variety. More precisely, there is a positive integer $l=l(n,d,v)$ such that  the orthonormal basis of $H^0(M_i, L_i^{l}, \omega_i)$ defines an embedding $\bar\Phi_i$, and  $\bar\Phi_i(M_i)$  ( perhaps replaced by taking a subsequence) converges to a normal variety $W$ in $\mathbb CP^N$ as cycles, which   is homeomorphic to $X$.
 \end{theo}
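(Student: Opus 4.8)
The plan is to follow the scheme of Donaldson--Sun, but feeding in the lower-Ricci-bound partial $C^0$-estimate rather than a two-sided bound. First I would fix the integer $l=l(n,d,v)$ produced by the partial $C^0$-estimate of \cite{LS} valid under a lower Ricci bound (Theorem~\ref{LS} being stated for $\mathrm{Ric}(\omega)\geq\omega$, but the argument applies verbatim under $\mathrm{Ric}(\omega_i)\geq-\omega_i$ after adjusting constants). For this $l$ one gets $\rho_l(M_i,\omega_i)\geq b>0$ uniformly in $i$. Combined with the upper bound on $\rho_l$ from Moser iteration and the gradient estimate for the sections $s^\alpha$, the orthonormal bases of $H^0(M_i,L_i^l,\omega_i)$ define embeddings $\bar\Phi_i:M_i\to\mathbb{CP}^N$ whose pulled-back Fubini--Study forms are uniformly comparable to $\omega_i$, i.e. $C^{-1}\omega_i\le\bar\Phi_i^*(\frac{1}{l}\omega_{FS})\le C\omega_i$ with $C=C(n,d,v)$. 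To fix $N$, I note that for $l$ large one has uniform Serre vanishing, so $\dim H^0(M_i,L_i^l)=\chi(L_i^l)$, and Riemann--Roch together with the volume bound, the diameter bound and Bishop--Gromov controls $\chi(L_i^l)$; hence, after passing to a subsequence, $N=N(l)$ is constant.

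Next I would pass to cycles. The image $V_i:=\bar\Phi_i(M_i)\subset\mathbb{CP}^N$ is an irreducible $n$-dimensional projective variety, and its degree is proportional to $\int_{M_i}\bar\Phi_i^*\omega_{FS}^{\,n}$, which is bounded above through the metric comparison and the volume bound. Since the cycle space of $\mathbb{CP}^N$ in bounded degree is compact (Chow variety / Bishop's theorem on limits of analytic sets), a subsequence of $V_i$ converges, as cycles and in the Hausdorff topology of supports, to an algebraic cycle $W$ of dimension $n$ in $\mathbb{CP}^N$.

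The third step is to identify $W$ with $X$. Because the metrics are non-collapsed ($\mathrm{vol}>v$) with Ricci bounded below, Cheeger--Colding theory gives $X=\mathcal R\sqcup\mathcal S$ with $\mathcal R$ open, dense and a smooth K\"ahler manifold and $\mathcal S$ closed of Hausdorff codimension $\ge 4$. On $\mathcal R$ the convergence is $C^\infty$ in the Cheeger--Gromov sense, and $\bar\Phi_i$ converge there to a holomorphic embedding of $\mathcal R$ onto an open dense subset of the smooth locus of $W_{\mathrm{red}}$; since $\mathcal R$ is connected, this forces $W$ to be reduced and irreducible. The uniform metric comparison converts the Gromov--Hausdorff convergence $(M_i,\omega_i)\to X$ into the compatible Hausdorff convergence $V_i\to\supp W$, so the map on $\mathcal R$ extends to a homeomorphism $X\cong W_{\mathrm{red}}$ carrying $\mathcal S$ onto the singular locus.

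The essential point is that $W$ is \emph{normal}, and this is where I expect the main obstacle. By Serre's criterion it suffices to verify $R_1$ and $S_2$. Property $R_1$ is immediate, since the singular locus corresponds to $\mathcal S$ of real codimension $\ge4$, hence complex codimension $\ge2$. For $S_2$/normality I would show that holomorphic functions extend across $\mathcal S$: using a H\"ormander $L^2$-estimate on the limit — driven by the lower Ricci bound and the $L^\infty$-control of the Bergman potential, which governs the weight — one constructs near each point of $\mathcal S$ enough holomorphic sections of $\cO(1)|_W$ to separate points and tangent directions of the normalization and to realize local holomorphic functions as restrictions; a Hartogs-type removable-singularity argument across the codimension-$\ge2$ set $\mathcal S$ then shows the normalization $W^\nu\to W$ is bijective, i.e. $W$ is normal. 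Carrying out this peaked $L^2$-construction with only a one-sided Ricci bound and no smoothness at the limit is the technical heart of the argument, and it is exactly what the estimate of \cite{LS} is designed to supply.
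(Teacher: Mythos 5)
First, a point of comparison: the paper itself contains no proof of Theorem \ref{normal}. It is quoted verbatim as a result of Liu--Sz\'ekelyhidi \cite{LS}, and the whole point of the paper is to \emph{use} it (together with Theorem \ref{LS}) as a black box. So your proposal is not being measured against an internal argument but against the actual proof in \cite{LS}, whose skeleton is indeed the Donaldson--Sun scheme you describe: uniform Tian embeddings from the partial $C^0$-estimate, boundedness of degree and Bishop/Chow compactness to extract a limit cycle $W$, identification of $W$ with $X$, and then normality. Your second step (cycle convergence) is essentially right, and your bound on $N$ could even be simplified: $N+1=\int_{M_i}\rho_l\,\omega_i^n\le C(n,d,v)$ by Moser iteration plus Bishop--Gromov, with no need for Serre vanishing or Riemann--Roch.

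The genuine gap is in your third and fourth steps, and it is exactly the point where a lower Ricci bound differs from the Einstein (or two-sided) setting. You assert that $X=\mathcal R\sqcup\mathcal S$ with $\mathcal R$ open and a smooth K\"ahler manifold, that the convergence on $\mathcal R$ is $C^\infty$ in the Cheeger--Gromov sense, and that $\mathcal S$ has codimension $\ge 4$. None of this is available under ${\rm Ric}(\omega_i)\ge-\omega_i$ alone: Cheeger--Colding theory then gives only that the regular set is dense (not open), with no smooth --- indeed not even $C^{1,\alpha}$ --- convergence there, and only codimension $\ge 2$ for the singular set; smooth convergence and codimension $4$ are features of two-sided Ricci bounds (Anderson, Cheeger--Naber) or of the Einstein case treated by Donaldson--Sun. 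Consequently your $R_1$ verification, which identifies the analytic singular locus of $W$ with $\mathcal S$ and quotes codimension $4$, is circular, and your homeomorphism $X\cong W_{\rm red}$ cannot be obtained by ``extending a map defined on $\mathcal R$'' since you have no smooth structure or convergence on $\mathcal R$ to start from. The actual argument in \cite{LS} must work directly on the approximating manifolds $M_i$: H\"ormander $L^2$-constructions are performed on $M_i$ near almost-Euclidean balls (made possible by their new partial $C^0$-estimate) to build holomorphic charts and peak sections with uniform estimates, and separation of points, the homeomorphism, and normality of $W$ are all extracted from these section-level estimates before passing to the limit --- not by solving $\bar\partial$ ``on the limit,'' which has no a priori complex structure. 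Your own closing admission that this peaked $L^2$-construction is the technical heart is accurate, but since the proposal defers precisely that step and leans elsewhere on regularity theory that fails in this generality, it does not constitute a proof.
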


 \begin{rem}\label{LS-theorem}According to the proof of  Theorem \ref{normal}  \cite{LS},  the limit  $\bar\Phi_\infty$ of maps $\bar\Phi_i$ actually gives a  homeomorphism from $X$ to $W$.
 \end{rem}

 We can choose the same positive integer in the above  two theorems.

 Applying  Liu-Sz\'ekelyhidi's result,  Theorem \ref{LS} to the K\"ahler metrics evolved in  (\ref{kr-flow}),   Zhang  proved  very recently  in \cite{ZhK},

 \begin{prop}\label{partial-zhang} Assume that  $(M,\omega_t)$ is the solution of (\ref{kr-flow}).
 Then there exists a positive integer $l=l(M,\omega_0)$ and a positive real number $b=b(M,\omega_0)$ such that $\rho_l(M,\omega_t)(x)\geq b$ for any $x\in M$.
 \end{prop}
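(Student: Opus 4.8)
The plan is to obtain the estimate by applying the uniform partial $C^0$-estimate of Liu-Sz\'ekelyhidi (Theorem~\ref{LS}, together with its companion Theorem~\ref{normal}) to the evolving metrics $\omega_t$, polarized by the fixed anticanonical bundle $L=K_M^{-1}$. Since the normalized flow (\ref{kr-flow}) preserves the cohomology class, $\omega_t\in 2\pi c_1(L)$ for every $t$, so each $(M,L,\omega_t)$ is a polarized K\"ahler manifold to which the theory applies. Two of the three geometric hypotheses come essentially for free: the total volume ${\rm vol}(M,\omega_t)=(2\pi)^n c_1(M)^n/n!$ is a cohomological constant, giving a uniform lower bound $v>0$, while Perelman's a priori estimates along the normalized K\"ahler-Ricci flow on a Fano manifold supply a uniform diameter bound ${\rm diam}(M,\omega_t)\le D$ together with uniform $\kappa$-non-collapsing. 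I would also record the Ricci potential $u_t$, defined by ${\rm Ric}(\omega_t)-\omega_t=\pbp u_t$, for which Perelman's estimates give uniform bounds on $\|u_t\|_{C^0}$, $\|\nabla u_t\|$ and $\|\Delta u_t\|$.

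The remaining hypothesis is a lower bound on the Ricci curvature, and this is the crux. The condition ${\rm Ric}(\omega_t)\ge\omega_t$ fails along the flow, since ${\rm Ric}(\omega_t)=\omega_t+\pbp u_t$ and $\pbp u_t$ has no sign; moreover a pointwise lower Ricci bound is not preserved under Ricci flow and does not follow from Perelman's estimates, which control only the trace $\Delta u_t$ of the complex Hessian of $u_t$ and not its smallest eigenvalue. The key step I would therefore isolate is a uniform bound of the form ${\rm Ric}(\omega_t)\ge-\Lambda\,\omega_t$ with $\Lambda=\Lambda(M,\omega_0)$ independent of $t$. Granting this, the rescaling $\hat\omega_t:=\Lambda\,\omega_t\in 2\pi c_1(L^{\Lambda})$ satisfies ${\rm Ric}(\hat\omega_t)\ge-\hat\omega_t$ with volume $\Lambda^n v$ and diameter $\sqrt{\Lambda}\,D$, so that all hypotheses of the Liu-Sz\'ekelyhidi theory hold with constants depending only on $(M,\omega_0)$.

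With the uniform geometric bounds secured, I would extract the effective lower bound $b$ by a compactness argument rather than by a pointwise peak-section construction at each fixed $t$. Fix the integer $l=l(n,\sqrt{\Lambda}\,D,\Lambda^n v)$ furnished by Theorem~\ref{normal} and suppose, for contradiction, that there were times $t_i$ and points $x_i\in M$ with $\rho_l(M,\omega_{t_i})(x_i)\to 0$. Using the non-collapsing and the upper volume estimate one passes to a subsequence along which $(M,\omega_{t_i})$ converges in the Gromov-Hausdorff topology to a limit $(X,d_\infty)$; by Theorem~\ref{normal} and Remark~\ref{LS-theorem} the Bergman embeddings converge to a homeomorphism of $X$ onto a normal variety $W\subset\mathbb{C}P^{N}$, and the Bergman kernels $\rho_l(M,\omega_{t_i})$ converge to a continuous, strictly positive limit on the (compact) image. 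This contradicts $\rho_l(M,\omega_{t_i})(x_i)\to0$ and produces the uniform constant $b=b(M,\omega_0)$; translating back through the rescaling gives the desired $l$ and $b$ for the original polarization.

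I expect the single genuine obstacle to be the uniform lower Ricci bound ${\rm Ric}(\omega_t)\ge-\Lambda\,\omega_t$ of the second paragraph: everything else is either cohomological bookkeeping, a direct appeal to Perelman's estimates, or an application of the Liu-Sz\'ekelyhidi machinery. This bound is precisely the point at which one must exploit the parabolic structure of (\ref{kr-flow})---for instance through Perelman's pseudolocality theorem, or through an evolution and maximum-principle argument for the lowest eigenvalue of the Ricci tensor, possibly in an integral rather than a pointwise form. It is worth emphasizing that this delicate step enters only in the cited input Proposition~\ref{partial-zhang}; the subsequent Monge-Amp\`ere arguments of the present paper are designed to run from the partial $C^0$-estimate alone, without further recourse to pseudolocality.
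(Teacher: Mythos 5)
Your proposal contains a genuine gap, and it is exactly the step you yourself flag: the uniform lower Ricci bound ${\rm Ric}(\omega_t)\ge-\Lambda\,\omega_t$ along the flow. This is not a technical lemma that can be filled in by the tools you mention; it is precisely the estimate that is unavailable in this subject. Perelman's estimates (Lemma \ref{lem:perelman-1}) control the scalar curvature, i.e.\ the trace $\Delta h_t$, the diameter and the non-collapsing, but give no control of the smallest eigenvalue of ${\rm Ric}(\omega_t)$; Ricci lower bounds are not preserved by (K\"ahler-)Ricci flow in complex dimension $\ge 2$, and a maximum-principle argument for the lowest eigenvalue of the Ricci tensor fails for exactly that reason; pseudolocality yields local curvature bounds only on regions with almost-Euclidean isoperimetric constant, not a global Ricci lower bound. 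Indeed, if a uniform bound ${\rm Ric}(\omega_t)\ge-\Lambda\,\omega_t$ were known, the Hamilton-Tian conjecture would follow rather directly from Cheeger-Colding theory together with Theorem \ref{normal}, which is why the works \cite{TZhzh, Chwang, Bam} are as involved as they are. So reducing Proposition \ref{partial-zhang} to this bound is a reduction to something strictly harder than what is being proved, and the proposal as it stands does not constitute a proof.

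The actual route, due to Zhang \cite{ZhK} and reproduced in the paper in the proof of Proposition \ref{q-fano-limit}, sidesteps any Ricci bound on $\omega_t$ itself: by Yau's theorem one solves (\ref{modified-metric})--(\ref{yau-equation}) for an auxiliary metric $\eta_t=\omega_t+\sqrt{-1}\partial\bar\partial\kappa_t\in 2\pi c_1(M)$ with ${\rm Ric}(\eta_t)=\omega_t$. Since $\omega_t>0$, the hypothesis ${\rm Ric}(\eta_t)\ge-\eta_t$ of the Liu-Sz\'ekelyhidi theorems holds \emph{automatically} for $\eta_t$, while the volume bound is cohomological and the diameter bound for $\eta_t$ is verified from Perelman's estimates. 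One then transfers the conclusion back to $\omega_t$: Perelman's bound $\|h_t\|_{C^0}\le C$ together with Moser iteration and Zhang's uniform Sobolev inequality \cite{Zh} gives $|\kappa_t|\le A$ as in (\ref{c0-psi-kappa}), so the Hermitian metrics and volume forms attached to $\omega_t$ and $\eta_t$ are uniformly equivalent; hence the two $L^2$-inner products on $H^0(M,K_M^{-l})$ are uniformly equivalent, the change-of-basis matrices satisfy (\ref{matrix}), and as in (\ref{partial-WZ}) one gets $\rho_l(M,\omega_t)\ge c_0\,\rho_l(M,\eta_t)\ge c_0'>0$. Your framework (fixed polarization $K_M^{-1}$, cohomological volume bound, Perelman diameter bound, then Liu-Sz\'ekelyhidi) is the right shell, but the missing insight is that the Ricci hypothesis should be arranged to hold for an auxiliary Yau metric in the same class rather than proved for the flow metric; also note that Theorem \ref{LS} already supplies the uniform constant $b$, so your concluding compactness argument is unnecessary once the hypotheses are in place.
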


 By  Proposition \ref{partial-zhang},  one can define  a holomorphic map $\Phi_t: M\to \mathbb CP^N$ by an orthonormal  basis of   $H^0(M,  K^{-l}_M, $
  $\omega_t)$
 for any $t\in(0, \infty)$.   In the following, we  apply  Theorem \ref{normal} to get a normal variety structure for the limit of images $\Phi_t(M)$  in $\mathbb CP^N$ in sense of algebraic geometry.

 \subsection{Partial $C^0$-estimate for K\"ahler-Ricci flow}

 Let $\omega_t=\omega_0+\sqrt{-1}\partial \bar\partial \phi_t$ be the solution of (\ref{kr-flow}). Then
 $${\rm Ric}\,(\omega_t)-\omega_t=\sqrt{-1}\partial \bar\partial (-\dot \phi_t ).$$
  The following estimates are due to G. Perelman. We refer the  reader to
 \cite{ST} for their proof.

 \begin{lem}\label{lem:perelman-1}  There are constants $c>0$ and $C>0$ depending only on the initial metric $\omega_0$ such that the following is true:

 1) ${\rm diam}(M,\omega_t)\le C$, ~
 ${\rm vol}(B_r(p),\omega_t )\ge c r^{2n}$;

 2) For any $t\in (0,\infty)$, there is a constant $c_t$ such that  $h_t=-\dot \phi_t +c_t$ satisfies
 \begin{align}\label{h-t-estimate}  \|h{_t}\|_{C^0(M)}\le C,
 ~\|\nabla h_{t}\|_{\omega_{t}} \le C,
 ~ \|\Delta h_{t}
 \|_{C^0(M)} \le C.
 \end{align}
  \end{lem}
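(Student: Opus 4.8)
The plan is to follow Perelman's original argument, whose engine is the monotonicity of his entropy functionals along the flow together with the maximum principle applied to the normalized Ricci potential $h_t$. The feature that makes every constant uniform in $t$ is that the normalized flow (\ref{kr-flow}) preserves the K\"ahler class, $\omega_t\in 2\pi c_1(M)$ for all $t$, so the total volume $V=\int_M\omega_t^n$ is constant and $h_t$ is a genuine Ricci potential, ${\rm Ric}(\omega_t)-\omega_t=\sqrt{-1}\partial\bar\partial h_t$, with $c_t$ chosen (e.g. so that $\int_M e^{-h_t}\omega_t^n=V$). Differentiating the flow equation shows that $h_t$ satisfies a heat-type equation $\partial_t h_t=\Delta h_t+h_t-b(t)$ with $b(t)$ a spatial constant, and tracing the Ricci-potential identity gives $\Delta h_t=S(\omega_t)-n$; hence the asserted bound $\|\Delta h_t\|_{C^0}\le C$ is equivalent to a uniform two-sided bound on the scalar curvature $S(\omega_t)$.

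The heart of the matter, and the part I expect to be hardest, is the simultaneous bound on $h_t$, $|\nabla h_t|_{\omega_t}$ and $S(\omega_t)$. First I would record Perelman's $\mathcal{W}$- and $\mu$-functionals for $g_t=\omega_t$; since the flow exists for all time and $\mu$ is monotone and bounded above by its limiting value, one obtains uniform control of the weighted integrals involving $|\nabla h_t|^2$. The pointwise bounds are then extracted by applying the maximum principle to auxiliary quantities built from $h_t$: the evolution equation for $h_t$ yields a one-sided $C^0$ bound directly, while the coupled evolution of combinations such as $|\nabla h_t|^2+a\,h_t$ and of the modified scalar curvature $S(\omega_t)+|\nabla h_t|^2-h_t$ produces the gradient bound and the two-sided scalar bound. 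The delicate point is that these maximum-principle computations must be closed off using the integral control coming from the entropy monotonicity; the pointwise and integral estimates are genuinely interlocked, which is the technical core of the Perelman--Sesum--Tian estimate.

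With the scalar curvature, gradient and $C^0$ bounds in hand, part (1) follows from standard geometric input. The volume lower bound ${\rm vol}(B_r(p),\omega_t)\ge c\,r^{2n}$ is Perelman's $\kappa$-non-collapsing on a fixed scale, itself a direct consequence of the lower bound on $\mu(g_t,\tau)$ furnished by the monotonicity, the exponent $2n$ being the real dimension. For the diameter bound I would combine the uniform scalar curvature bound with this non-collapsing and with Zhang's uniform volume upper estimate \cite{Zhq}: a contradiction argument rules out long thin necks, since along a minimizing geodesic the non-collapsing forces definite volume accumulation while the total volume is bounded above, forcing ${\rm diam}(M,\omega_t)\le C$. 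All constants produced this way depend only on $n$, on $V=(2\pi)^n c_1(M)^n$, and on the initial value of Perelman's functional, hence only on $\omega_0$, as claimed; the full maximum-principle computations are carried out in \cite{ST}, to which we refer.
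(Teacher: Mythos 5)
You should first note that the paper itself gives no proof of this lemma: it attributes the estimates to Perelman and refers to Sesum--Tian \cite{ST}. So the only question is whether your sketch is a faithful outline of that argument, and it is not: you have the logical order of the estimates reversed, and the reversal is not cosmetic. In \cite{ST} the maximum principle is applied to quantities of the form $|\nabla h_t|^2/(B-h_t)$ and $(-\Delta h_t)/(B-h_t)$ (with $B\sim \sup_M h_t+1$), and it yields only \emph{relative} estimates
\[
|\nabla h_t|^2 \le C\,(B-h_t), \qquad -\Delta h_t \le C\,(B-h_t),
\]
i.e.\ bounds in terms of the a priori uncontrolled oscillation of $h_t$; no amount of ``closing off with the entropy integral control'' upgrades these to uniform pointwise bounds at that stage. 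The uniform bounds on $\|h_t\|_{C^0}$, $\|\nabla h_t\|_{\omega_t}$ and $R(\omega_t)$ are deduced \emph{from} the diameter bound (together with the normalization $\int_M e^{-h_t}\omega_t^n=V$), not the other way around. The only estimate that comes ``directly'' from an evolution equation is the lower bound $R(\omega_t)\ge -C$; a one-sided $C^0$ bound on $h_t$ does not follow from its heat-type equation, because the $+h_t$ reaction term defeats the naive maximum principle.

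Consequently your derivation of part 1) is circular. You propose to obtain the diameter bound from non-collapsing at a fixed scale plus the bounded total volume; but Perelman's $\kappa$-non-collapsing, as extracted from the lower bound on $\mu$, is conditional: it gives ${\rm vol}(B_r(p),\omega_t)\ge \kappa r^{2n}$ only on balls where the scalar curvature satisfies $R\le r^{-2}$, so invoking it at unit scale requires precisely the uniform scalar curvature bound that, in your ordering, has not yet been established. In \cite{ST} the diameter bound is instead proved by Perelman's separate annulus argument: if ${\rm diam}(M,\omega_t)$ were large, some dyadic annulus $B(p,2^{k+1})\setminus B(p,2^k)$ would have very small volume, and inserting a cutoff function supported near such an annulus into the $\mathcal W$-functional (with the relative estimates above controlling the curvature term) would force $\mu(g_t,\cdot)\to-\infty$, contradicting monotonicity and the initial bound. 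That argument is the crux of the whole lemma and is absent from your sketch. A minor further point: the appeal to Zhang's estimate \cite{Zhq} is misplaced here --- the total volume is automatically the fixed number $V=\int_M\omega_0^n$ because the flow preserves $2\pi c_1(M)$, and \cite{Zhq} (an upper bound on ball volumes, postdating \cite{ST}) is used in this paper for the Gromov--Hausdorff convergence statement, not for Perelman's estimates.
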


 \begin{prop}\label{q-fano-limit}
 Given any sequence $t_i\rightarrow \infty$, there is a subsequence, which is still denoted as $t_i$, such that the embedding of $M$   by an orthonormal  basis  $\{s^\alpha_{t_{i}}\}$ of   $H^0(M,  K^{-l}_M, \omega_{t_i})$
  in $\mathbb CP^N$ converges to a  normal variety $\tilde M_\infty$.
 \end{prop}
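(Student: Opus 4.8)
The plan is to realize the fixed-time slices $(M,\omega_{t_i})$ as a sequence of polarized K\"ahler manifolds to which Theorem \ref{normal} applies, taking $L=K_M^{-1}$ with background forms $\omega_{t_i}$ and a single integer $l$ large enough to serve both Proposition \ref{partial-zhang} and Theorem \ref{normal}. Since the flow (\ref{kr-flow}) preserves the complex structure $J$ and keeps the class $[\omega_{t_i}]=2\pi c_1(M)=2\pi c_1(K_M^{-1})$ fixed, each $\omega_{t_i}$ polarizes $K_M^{-1}$, the space $H^0(M,K_M^{-l})$ has a fixed dimension $N+1$ (independent of $i$, by Kodaira vanishing and Riemann-Roch), and the total volume $\int_M\omega_{t_i}^n=(2\pi)^n c_1(M)^n$ is a fixed positive constant. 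First I would record the geometric hypotheses: by Perelman's estimates in Lemma \ref{lem:perelman-1}(1), the diameters are uniformly bounded above and the metrics are non-collapsed, so the volume lower bound and diameter upper bound (\ref{vol-diamerter-condition}) hold with constants independent of $i$.

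Next I would extract a Gromov-Hausdorff convergent subsequence $(M,\omega_{t_i})\to (X,d_\infty)$, which is the third bullet of Theorem \ref{normal}. Here one cannot invoke Gromov's precompactness theorem, since a pointwise Ricci lower bound is not available along the flow; instead the precompactness follows from Perelman's non-collapsing combined with Zhang's volume upper estimate \cite{Zhq}, which together yield a uniform ball-packing bound and hence subsequential GH convergence.

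The essential input is the partial $C^0$-estimate, and this is where the reconciliation with the Liu-Sz\'ekelyhidi hypotheses must happen. Since ${\rm Ric}(\omega_{t_i})=\omega_{t_i}+\sqrt{-1}\partial\bar\partial h_{t_i}$ and Lemma \ref{lem:perelman-1}(2) controls only the trace $\Delta h_{t_i}$ of the Hessian $\sqrt{-1}\partial\bar\partial h_{t_i}$, its remaining eigenvalues are uncontrolled, so the literal assumption ${\rm Ric}(\omega_{t_i})\geq -\omega_{t_i}$ of Theorem \ref{normal} need not hold along the flow. Its role is taken over by Zhang's Proposition \ref{partial-zhang}, which supplies the uniform lower bound $\rho_l(M,\omega_{t_i})(x)\geq b$ directly. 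Combined with the gradient estimate for $s^\alpha$ and the comparison (\ref{lower-bound-metric}), the orthonormal basis $\{s^\alpha_{t_i}\}$ of $H^0(M,K_M^{-l},\omega_{t_i})$ defines embeddings $\Phi_{t_i}\colon M\to\mathbb{CP}^N$ into a fixed projective space, whose images carry the fixed degree $l^n c_1(M)^n$.

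Finally I would invoke the Liu-Sz\'ekelyhidi conclusion. With the partial $C^0$-estimate and the geometric bounds in hand, the bounded-degree cycles $\Phi_{t_i}(M)$ subconverge, after passing to a further subsequence, to a limit cycle $\tilde M_\infty\subset\mathbb{CP}^N$, and Theorem \ref{normal} together with Remark \ref{LS-theorem} identifies $\tilde M_\infty$ as a normal variety, homeomorphic to $(X,d_\infty)$ via the limit map $\bar\Phi_\infty$. The main obstacle is precisely the substitution carried out in the third step: the two-sided pointwise Ricci control underpinning the Cheeger-Colding theory behind Theorem \ref{normal} is genuinely unavailable along the K\"ahler-Ricci flow, so the whole argument hinges on Zhang's Proposition \ref{partial-zhang} serving as the analytic replacement that forces the algebraic limit $\tilde M_\infty$ to be normal.
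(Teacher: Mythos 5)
There is a genuine gap in your third and fourth steps. You correctly identify that the hypothesis ${\rm Ric}(\omega_{t_i})\geq -\omega_{t_i}$ of Theorem \ref{normal} is not available along the flow (Perelman only controls $\Delta h_t$, not the full Hessian of $h_t$), but your proposed fix --- letting Proposition \ref{partial-zhang} ``take over the role'' of the Ricci lower bound --- does not work. The partial $C^0$-estimate gives you uniform embeddings $\Phi_{t_i}:M\to\mathbb{CP}^N$ and, by compactness of the cycle space, a subsequential limit cycle; but the conclusion you need from Theorem \ref{normal}, namely that this limit is a \emph{normal variety} homeomorphic to the Gromov--Hausdorff limit, is not a formal consequence of the Bergman kernel lower bound. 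In Liu--Sz\'ekelyhidi's proof, normality is extracted from the Cheeger--Colding structure theory of Gromov--Hausdorff limits of manifolds with Ricci bounded below (tangent cones, the structure of the singular set, H\"ormander-type constructions on the limit space), and all of that machinery requires the pointwise Ricci lower bound as an input. So at the point where you ``invoke the Liu--Sz\'ekelyhidi conclusion,'' you are applying a theorem whose hypotheses you have just explained are not satisfied; the substitution you describe is precisely the step that has no proof.

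The paper closes this gap differently: following Zhang \cite{ZhK}, it introduces the auxiliary metrics $\eta_t=\omega_t+\sqrt{-1}\partial\bar\partial\kappa_t$ solving ${\rm Ric}(\eta_t)=\omega_t$ via Yau's theorem, so that ${\rm Ric}(\eta_t)>0\geq-\eta_t$ and Theorem \ref{normal} genuinely applies to the sequence $(M,\eta_{t_i})$ (the volume and diameter bounds (\ref{vol-diamerter-condition}) for $\eta_t$ are verified from Lemma \ref{lem:perelman-1}). This produces embeddings $\bar\Phi_i$ by $\eta_{t_i}$-orthonormal bases of $H^0(M,K_M^{-l})$ whose images converge to a normal variety $W$. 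The conclusion is then transferred to the flow metrics by linear algebra: since $h_t$ and $\kappa_t$ are uniformly bounded, the $L^2$-inner products defined by $\omega_{t_i}$ and $\eta_{t_i}$ are uniformly equivalent, so the change-of-basis matrices $A(t_i)$ between the two orthonormal bases satisfy (\ref{matrix}) and subconverge to some $A\in SL(N+1,\mathbb{C})$; hence the images under the flow-metric embeddings converge to $A\circ W=\tilde M_\infty$, which is normal because it is a projective-linear image of $W$. If you want to salvage your outline, this auxiliary-metric construction (or some equivalent device producing a comparison sequence with an honest Ricci lower bound) is the missing ingredient; the partial $C^0$-estimate for $\omega_{t_i}$ alone cannot deliver normality of the limit.
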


 \begin{proof}As in \cite{ZhK}, we
 let $\eta_t$ be a solution of
 \begin{align}\label{modified-metric}{\rm Ric}\,(\eta_t)=\omega_t.
 \end{align}
 Writing $\eta_t= \omega_t+\sqrt{-1}\partial \bar \partial \kappa_t$,  we have
 \begin{align}\label{yau-equation}
 (\omega_t+\sqrt{-1}\partial \bar \partial \kappa_t)^n=e^{h_t}\omega_t^n,~\sup_M\kappa_t=0,
 \end{align}
 where $h_t$ is the  Ricci potential of $\omega_t$ chosen as in Lemma \ref{lem:perelman-1}.
 By the Yau's solution to Calabi's  problem \cite{Yau},   (\ref{yau-equation})  can be solved. Moreover,
 since
 \begin{align}\label{h-t-estimate-0}  \|h{_t}\|_{C^0(M)}\le C,
 \end{align}
 by the moser iteration  (cf. \cite{TZ1}) together with  Zhang's  Sobolev inequality \cite{Zh},
 \begin{align}\label{c0-psi-kappa}
 |\kappa_t|\le C( \|h{_t}\|_{C^0(M)}, \omega_0)\le A.
 \end{align}

  By Lemma \ref{lem:perelman-1}-2),  it has been verified by Zhang  in \cite{ZhK}
  that  (\ref{vol-diamerter-condition}) is satisfied for metrics $\eta_t$.
 Then applying  Theorem \ref{normal},   there is a positive integer $l$ such that each  orthonormal basis $\{\bar s_{t_i}^\alpha\}$ of $H^0(M, K_M^{-l}, \eta_{t_i})$ defines an embedding $\bar \Phi_i:M\to \mathbb CP^N$.   By choosing  a subsequence
 $t_i\rightarrow \infty$ if necessary,   $\bar \Phi_i(M)$ converges to $W$ which is a normal variety.
  On the other hand, by   (\ref{h-t-estimate-0}) and  (\ref{c0-psi-kappa}),  if we let
  $\{s_{t_i}^\alpha\}$ $(0\leq \alpha\leq N(l))$ be the orthonormal basis of $H^0(M, K_M^{-l}, \omega_{t_i})$, there is an invertible matrix $A(t_i)=[a_\alpha^\beta(t)]$ such that $ s^\beta_{t_i}=a_\alpha^\beta(t_i) \bar s_{t_i}^\alpha$ and
  \begin{align}\label{matrix}
  \|A(t_i)\|\leq C, \|A^{-1}(t_i)\|\leq C,
  \end{align}
  where  $C>0$ is a uniform constant.    Thus by  taking a subsequence, we have $A(t_i)\rightarrow A\in SL(N+1;\mathbb C)$. Hence,  the embedding by $\{s^\alpha_{t_i}\}$ converges to $A\circ W=\tilde M_\infty$.  The proposition is proved.
 \end{proof}

 We note  by (\ref{h-t-estimate-0})   and    (\ref{matrix}) that  there is  also a  partial $C^0$-estimate for the metrics $\omega_{t_i}$ as in  Proposition  \ref{partial-zhang}. Namely,
 \begin{align}\label{partial-WZ}
 \rho_l(M,\omega_i)(x)\, &=\,\Sigma_{i=0}^N \,|s_{t_i}^\alpha|^2_{h^{\otimes l}}(x)\ge c_0 \,\Sigma_{i=0}^N \,|\bar s_{t_i}^\alpha|^2_{h^{\otimes l}}(x)\notag\\
 &= c_0 \rho_i(M,\eta_i)(x)\ge c_0' >0.
 \end{align}
 Denoting the embedding of $\{s^\alpha_{t_{i}}\}$ by $\Phi_i$, we have $\tilde M_i=\Phi_i(M)$ converging  to a normal variety $\tilde M_\infty$  by Proposition \ref{q-fano-limit}.  Thus
    \begin{align}\label{C^0}
   \omega_{t_i}=( \Phi_i)^*(\frac{1}{l}\omega_{FS})-\sqrt{-1}\partial\bar\partial \phi_i,   \end{align}
 where $\phi_i= \frac{1}{l} \log\rho_l(M,\omega_i)$ and it satisfies
 \begin{align}\label{c0-phi}|\log \phi_i|\leq C.
 \end{align}
 Moreover, by  the   gradient  estimate for $s^\alpha_{t_{i}}$  \cite[Lemma 3.1]{JWZ},
 \begin{align}\label{section-estiamte-1}
 \|\nabla s^\alpha_{t_{i}}\|_{\omega_{t_i}}\leq C(   \|h{_{t_i}}\|_{C^0(M)}, C_s,  n)l^{\frac{n}{2} +1},
 \end{align}
 where $C_s$ is the Sobolev constant of $(M,\omega_{t_i})$,  which is uniformly controlled \cite {Zh}.
 Thus as in (\ref{lower-bound-metric}), we have
   \begin{align}\label{upper-bound} \frac{1}{l} \omega_{FS}|_{\tilde M_i}\le C     (\Phi_i^{-1})^*\omega_{t_i}.
    \end{align}

 Next we want to extend the relations (\ref{c0-phi}) and (\ref{upper-bound}) for any metrics $\omega_{t_i+s}$, where $s\in [-1, 1]$.
 Let  $\{s^\alpha_{t_{i}+s}\}$  be   an  orthonormal basis    of $H^0(M, K_M^{-l}, \omega_{t_i+s})$ which gives a  Kodaira embedding  $\Phi_i^s: M\to \mathbb CP^N$.  We show

  \begin{prop}\label{metric-equivalent}For any  $s\in [-1, 1]$, it holds
   \begin{align}\label{upper-metric-2}  \frac{1}{l}\omega_{FS}|_{\tilde M_i}\le C( ( \Phi_{i}^s)^{-1})^*\omega_{t_i+s}.
   \end{align}
   Moreover, we can write
    \begin{align}\label{equivalent}  (  \Phi_{i}^{-1})^*\omega_{t_i+s}=\frac{1}{l}\omega_{FS}|_{\tilde M_i} +\sqrt{-1}\partial\bar\partial \psi_i^s,
   \end{align}
   where  $\psi_i^s$ is uniformly bounded  independently  of $i$ and $s$.
 \end{prop}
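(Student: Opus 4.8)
The plan is to prove the two assertions separately, and in both cases the engine is the same: all the relevant geometric quantities along (\ref{kr-flow}) are controlled uniformly in $t$. For (\ref{upper-metric-2}) I would simply rerun the derivation of (\ref{upper-bound}) at the time $t_i+s$ in place of $t_i$; for (\ref{equivalent}) I would use the parabolic structure of the flow to compare $\omega_{t_i+s}$ with the fixed metric $\omega_{t_i}$ through its uniformly $C^0$-bounded Ricci potential.

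For (\ref{upper-metric-2}) the observation is that nothing in the argument leading to (\ref{upper-bound}) is special to the particular time $t_i$. The bounds in Lemma \ref{lem:perelman-1} and the partial $C^0$-estimate of Proposition \ref{partial-zhang} hold for every $t\in(0,\infty)$ with constants depending only on $\omega_0$, hence in particular at $t=t_i+s$ for $s\in[-1,1]$, with constants uniform in both $i$ and $s$. Thus the Ricci lower bound, the volume lower bound and the diameter upper bound needed to run the Liu--Sz\'ekelyhidi machinery persist at time $t_i+s$, so $\Phi_i^s$ is indeed an embedding, and the gradient estimate (\ref{section-estiamte-1}) for an orthonormal basis $\{s^\alpha_{t_i+s}\}$ of $H^0(M,K_M^{-l},\omega_{t_i+s})$ holds with a uniform constant, since $\|h_{t_i+s}\|_{C^0(M)}$ and the Sobolev constant are uniformly bounded. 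Substituting these into the formula for $\Phi^*\omega_{FS}$ displayed just after (\ref{lower-bound-metric}) and dividing by the uniform lower bound $\rho_l(M,\omega_{t_i+s})\ge b$ reproduces (\ref{upper-bound}) verbatim at time $t_i+s$, which is exactly (\ref{upper-metric-2}).

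For (\ref{equivalent}) I would integrate (\ref{kr-flow}) in time to get
\[ \omega_{t_i+s}-\omega_{t_i}=\sqrt{-1}\,\partial\bar\partial\int_0^s \dot\phi_{t_i+u}\,du. \]
Since the Ricci potential satisfies $h_t=-\dot\phi_t+c_t$ with $c_t$ constant in space, the operator $\sqrt{-1}\,\partial\bar\partial$ annihilates the constants and the right-hand side equals $-\sqrt{-1}\,\partial\bar\partial\int_0^s h_{t_i+u}\,du$. By Lemma \ref{lem:perelman-1} and $|s|\le 1$, the potential $\int_0^s h_{t_i+u}\,du$ is bounded in $C^0(M)$ independently of $i$ and $s$. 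Combining this with (\ref{C^0})--(\ref{c0-phi}), which already express $\omega_{t_i}=\frac{1}{l}\Phi_i^*\omega_{FS}-\sqrt{-1}\,\partial\bar\partial\phi_i$ with $\phi_i$ uniformly bounded, I obtain
\[ \omega_{t_i+s}=\frac{1}{l}\Phi_i^*\omega_{FS}+\sqrt{-1}\,\partial\bar\partial\,\tilde\psi_i^s,\qquad \tilde\psi_i^s=-\phi_i-\int_0^s h_{t_i+u}\,du, \]
with $\|\tilde\psi_i^s\|_{C^0(M)}$ bounded uniformly. Setting $\psi_i^s=\tilde\psi_i^s\circ\Phi_i^{-1}$ and pulling back by $\Phi_i^{-1}$ yields (\ref{equivalent}) with the asserted uniform bound.

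I do not expect a deep difficulty here; the whole argument hinges on the constants being uniform over the time window $[t_i-1,t_i+1]$ and over $i$. The crucial inputs are therefore the genuinely $t$-independent nature of Perelman's bounds in Lemma \ref{lem:perelman-1}, especially $\|h_t\|_{C^0(M)}\le C$, together with the uniform partial $C^0$-estimate and Sobolev constant. The one conceptual point worth emphasizing is that in (\ref{equivalent}) one must compare against the \emph{single} embedding $\Phi_i$ fixed at time $t_i$, not the moving embedding $\Phi_i^s$: it is precisely this choice that converts the short-time $C^0$-smallness of the Ricci potential into a potential bound uniform in $s$.
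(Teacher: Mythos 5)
Your treatment of the second assertion (\ref{equivalent}) is correct, and it takes a genuinely different and cleaner route than the paper: instead of running the maximum principle on the parabolic equation (\ref{equation-s}) to get (\ref{c0-phis})--(\ref{volume-s}) and then passing through the change-of-basis matrix (\ref{matrix-2}) and the Bergman-kernel expression (\ref{c0-psi}), you integrate the flow directly, $\omega_{t_i+s}-\omega_{t_i}=-\sqrt{-1}\partial\bar\partial\int_0^s h_{t_i+u}\,du$, and invoke Perelman's bound $\|h_t\|_{C^0(M)}\le C$ from Lemma \ref{lem:perelman-1}. This gives the uniform potential bound in one line, with no PDE argument, and it handles $s\in[-1,0]$ and $s\in[0,1]$ symmetrically, whereas the paper must restart the flow at $t_i-1$ to treat negative $s$ (see (\ref{equation-s-1})).

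However, your proof of the first assertion (\ref{upper-metric-2}) has a genuine gap, and it is exactly the fixed-versus-moving-embedding issue that you yourself flag at the end for (\ref{equivalent}). Rerunning the derivation of (\ref{upper-bound}) at time $t_i+s$ (partial $C^0$-estimate at $t_i+s$, gradient estimate (\ref{section-estiamte-2})) yields $\frac{1}{l}\omega_{FS}|_{\Phi_i^s(M)}\le C\,((\Phi_i^s)^{-1})^*\omega_{t_i+s}$, which is the paper's intermediate estimate (\ref{upper-bound-2}): here the Fubini--Study metric is restricted to the \emph{moving} image $\Phi_i^s(M)$. That is not (\ref{upper-metric-2}), whose left-hand side is $\omega_{FS}|_{\tilde M_i}$ with $\tilde M_i=\Phi_i(M)$ the \emph{fixed} image; equivalently, pulled back to $M$, the proposition asserts $\frac{1}{l}\Phi_i^*\omega_{FS}\le C\,\omega_{t_i+s}$, and it is this fixed-embedding form that is used later (it supplies the lower bound in (\ref{c2-estimate}) of Lemma \ref{c3-phi}). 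Since $\Phi_i$ and $\Phi_i^s$ are Kodaira embeddings by bases of the same space $H^0(M,K_M^{-l})$ that are orthonormal with respect to \emph{different} $L^2$-inner products, they differ by a projective transformation, and the uniformly bounded K\"ahler potentials you produce do not control how such a transformation distorts $\omega_{FS}$. The missing bridge is precisely the paper's (\ref{fd-mretric}), which rests on the bound (\ref{matrix-2}) for the change-of-basis matrix $A(s)$ and its inverse; that bound in turn requires uniform equivalence of the two $L^2$-inner products, i.e.\ boundedness of the relative K\"ahler potential (which your integration gives) \emph{together with} boundedness of the volume ratio $\omega_{t_i+s}^n/\omega_{t_i}^n$ (the paper derives this from $|\dot\phi_s|\le C$; in your framework it would follow by integrating $|R_t-n|=|\Delta h_t|\le C$ in time). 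With that matrix step inserted your argument closes; as written, what you have proved is (\ref{upper-bound-2}), not (\ref{upper-metric-2}).
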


 \begin{proof} First we assume $s\in [0, 1]$.
   Denoting $\omega_{t_i+s}=\omega_{t_i}+\sqrt{-1}\partial \bar\partial \phi_s$, we have
 \begin{align}\label{equation-s}
 \frac{\partial \phi_s}{\partial s}=\log \frac{(\omega_{t_i}+\sqrt{-1}\partial \bar\partial \phi_s)^n}{\omega^n_{t_i}}+\phi_s-h_{t_i},~\phi_{s=0}=0.
 \end{align}
 Taking the derivative at $s$, we get
 $$\frac{d}{ds}\dot \phi_s=\Delta_{\omega_{t_i+s}} \dot \phi_s+\dot \phi_s,~s\in [0, 1].$$
 Thus by the maximum principle,
 \begin{align}\label{c0-phis}| \phi_s|\le C,  |\dot \phi_s|\le C.
 \end{align}
 As a consequence,
 \begin{align}\label{volume-s}
 |\log \frac{(\omega_{t_i}+\sqrt{-1}\partial \bar\partial \phi_s)^n}{\omega^n_{t_i}}|\le C, ~\forall ~s\in [0, 1],
 \end{align}
   which means the volume elements are equivalent between $\omega_{t_i+s}$ and $\omega_{t_i}$. Hence,  as in the proof of
   Proposition \ref{q-fano-limit},  there is an invertible matrix $A(s)=[a_\alpha^\beta(t_i+s)]$ such that $\ s^\beta_{t_i+s}=a_\alpha^\beta(t_i+s) s_{t_i}^\alpha$ and
  \begin{align}\label{matrix-2}
  \|A(s)\|\leq C, \|A^{-1}(s)\|\leq C.
  \end{align}
  where  $C>0$ is a uniform constant independent of $i, s$.
   Therefore,  there is a uniform constant $c>0$ such that
   \begin{align}\label{fd-mretric}   c^{-1} \omega_{FS}|_{\tilde M_i}\le  (\Phi_i^s \cdot\Phi_i^{-1} )^*\omega_{FS} \le c \omega_{FS}|_{\tilde M_i}.
   \end{align}

  By (\ref{c0-phis}) and (\ref{volume-s}),    similar to (\ref{c0-phi}),    we also have the partial $C^0$-estimate
  for metrics $\omega_{t_i+s}$ and
   \begin{align}\label{partial-WZ-3}
  | \log \rho_l(M,\omega_{t_i+s})|\le A_0.
  \end{align}
 Note that as in   (\ref{section-estiamte-1})
     each section  $s^\alpha_{t_{i}+s}$  satisfies
  \begin{align}\label{section-estiamte-2}
 \|\nabla s^\alpha_{t_{i}+s}\|_{h}\leq C(   \|h{_{t_i+s}}\|_{C^0(M)}, C_s,  n)l^{\frac{n}{2} +1}.
 \end{align}
   Thus  we have an analogy of  (\ref{upper-bound}),
 \begin{align}\label{upper-bound-2} \frac{1}{l} \omega_{FS}|_{\Phi_i^s(M)}\le C     ((\Phi_i^s)^{-1})^*\omega_{t_i+s}.
    \end{align}
 Combining this  with (\ref{fd-mretric}), we derive (\ref{upper-metric-2}).

 It is easy to see that
    \begin{align}\label{c0-psi}\psi_i^s= \log \frac{\Sigma_{i=0}^N |s_{t_i+s}^\alpha|^2}{\Sigma_{i=0}^N \,|\bar s_{t_i}^\alpha|^2}
    -\log \rho_l(M,\omega_{t_i+s}) (\Phi_i^{-1}(\cdot)).
    \end{align}
    Then  by  (\ref{matrix-2}) and (\ref{partial-WZ-3}) together with   (\ref{matrix}),  we  get
    \begin{align}\label{phi-s-c0}
   |  \psi_i^s|\le C.
   \end{align}

 Next we consider $ s\in [-1, 0]$. Then we rewrite  (\ref{equation-s}) as
 \begin{align}\label{equation-s-1}
 \frac{\partial \phi_{s'}}{\partial s'}=\log \frac{(\omega_{t_i-1}+\sqrt{-1}\partial \bar\partial \phi_{s'})^n}{\omega^n_{t_i-1}}+\phi_{s'}-h_{t_i-1},~\phi_{s'=0}=0,~s'\in [0, 1].
 \end{align}
 Thus (\ref{c0-phis}) still holds.  Henc we can  follows the argument in  the case  for $ s\in [0, 1]$ to  get (\ref{phi-s-c0}).

 \end{proof}

 \section{Local higher $C^{k,\alpha}$-estimate}

 In this section, we derive locally  higher $C^{k,\alpha}$-estimate for $ \psi_i^s$ in Proposition \ref{metric-equivalent}.
   Let's  choose  an exhausting open sets $\Omega_\gamma\subset  \tilde M_\infty$.  Then by Proposition \ref{q-fano-limit},  there are  diffeomorphisms $\Psi_\gamma^i: \Omega_\gamma\to \tilde M_i$ such that
  the curvature of $ \omega_{FS}|_{\tilde {\Omega}_\gamma^i}$ is $C^k$-uniformly bounded independently of $i$ , where  $\tilde {\Omega}_\gamma^i=\Psi_\gamma^i(\Omega_\gamma)$.

 For simplicity,  we let $\tilde\omega_i=\frac{1}{l}\omega_{FS}|_{\tilde M_i}$.  Then by (\ref{equivalent}), we have
  $$(\Phi_i^{-1})^*\omega_{t_i+s}=\tilde\omega_{i}+\sqrt{-1}\partial\bar\partial \psi^s_i,   ~{\rm in} ~\tilde M_i, \forall s\in [-1,1].$$

  \begin{lem}\label{c3-phi}
  There exist  constants $A, C_\gamma, A_\gamma$ such that for $s\in [-1,1]$,
  \begin{align}
  &| \psi^s_i|\le A, ~{\rm in} ~\tilde M_i, \label{c0-estimate}\\
  &C_\gamma^{-1}\tilde\omega_{i}\le(\Phi_i^{-1})^*\omega_{t_i+s}\le C_\gamma \tilde\omega_{i}, ~{\rm in}~\tilde \Omega_\gamma^i,
  \label{c2-estimate} \\
  &\| \psi^s_i\|_{C^{3,\alpha}(\tilde {\Omega}_\gamma^i)} \le A_\gamma.\label{c3-estimate}
  \end{align}
 \end{lem}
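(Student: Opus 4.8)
The first estimate \eqref{c0-estimate} requires nothing new: it is precisely \eqref{phi-s-c0} of Proposition \ref{metric-equivalent}, transported to the fixed embedding $\Phi_i$. Likewise the lower half of \eqref{c2-estimate}, namely $(\Phi_i^{-1})^*\omega_{t_i+s}\ge C^{-1}\tilde\omega_i$, already holds globally on $\tilde M_i$, being \eqref{upper-metric-2} combined with the equivalence \eqref{fd-mretric} of the two Fubini--Study pullbacks. Thus the entire content of the lemma is the \emph{upper} metric bound on $\tilde\Omega_\gamma^i$ and the consequent higher regularity; these are local because $\tilde\omega_i=\frac1l\omega_{FS}|_{\tilde M_i}$ has uniformly bounded geometry only on $\tilde\Omega_\gamma^i$, whereas near the limiting singularities its curvature degenerates.

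The plan for \eqref{c2-estimate} is to exploit that $\Phi_i\colon M\to\tilde M_i$ is a biholomorphism, so that $g(s):=(\Phi_i^{-1})^*\omega_{t_i+s}$ is itself a solution of the normalized K\"ahler--Ricci flow $\partial_s g=-\mathrm{Ric}(g)+g$ on the fixed variety $\tilde M_i$, with $g(s)=\tilde\omega_i+\sqrt{-1}\partial\bar\partial\psi^s_i$ and $\partial_s\psi^s_i=-\tilde h(s)+\mathrm{const}$, where $\tilde h(s)=h_{t_i+s}\circ\Phi_i^{-1}$ is the Ricci potential of Lemma \ref{lem:perelman-1}. On $\tilde\Omega_\gamma^i$ one then runs the parabolic Schwarz (Aubin--Yau) calculation with fixed background $\tilde\omega_i$, giving
\[
\Big(\frac{\partial}{\partial s}-\Delta_{g}\Big)\log \mathrm{tr}_{\tilde\omega_i}g\ \le\ C_0\,\mathrm{tr}_{g}\tilde\omega_i+C,
\]
where $C_0$ bounds the bisectional curvature of $\tilde\omega_i$ there. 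The decisive point is that the already established lower bound $g\ge C^{-1}\tilde\omega_i$ forces $\mathrm{tr}_{g}\tilde\omega_i\le nC$, so the right-hand side is a uniform constant; a localized maximum principle (a cutoff supported in a slightly larger $\tilde\Omega^i_{\gamma+1}$, together with the auxiliary term $-A\psi^s_i$, the cutoff errors being absorbed by $g\ge C^{-1}\tilde\omega_i$, the bounded geometry of $\tilde\omega_i$ and \eqref{c0-estimate}) then yields $\mathrm{tr}_{\tilde\omega_i}g\le C_\gamma$ on $\tilde\Omega_\gamma^i$. I expect this to be the main obstacle, and the reason for passing to the flow is exactly that an elliptic second order estimate is unavailable: the Ricci curvature of $\omega_{t_i+s}$ is not bounded from above along the flow, and it is the time derivative in the parabolic operator that absorbs $\mathrm{Ric}(g)$ through the evolution equation.

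Granting \eqref{c2-estimate}, on $\tilde\Omega_\gamma^i$ the metrics $g(s)$ are uniformly equivalent to $\tilde\omega_i$ and $\psi^s_i$ is uniformly $C^{1,1}$, so the relevant complex Monge--Amp\`ere equation is uniformly elliptic. Integrating $\mathrm{Ric}(g)-g=\sqrt{-1}\partial\bar\partial\tilde h(s)$ against $\mathrm{Ric}(\tilde\omega_i)-\tilde\omega_i=\sqrt{-1}\partial\bar\partial\rho_i$, with $\rho_i$ a local potential of bounded geometry, the equation takes the form $\log\frac{g^n}{\tilde\omega_i^n}=\rho_i-\psi^s_i-\tilde h(s)+(\text{pluriharmonic})$, whose right-hand side is uniformly bounded and Lipschitz, using $\|\nabla h_{t_i+s}\|\le C$ from Lemma \ref{lem:perelman-1} and the metric equivalence. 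The complex Evans--Krylov theorem then provides a uniform interior $C^{2,\alpha}$-bound for $\psi^s_i$. Finally $\tilde h(s)$ solves $\Delta_{g}\tilde h=S(g)-n$ with uniformly bounded right-hand side (the bound on $\|\Delta h_{t_i+s}\|_{C^0}$ in Lemma \ref{lem:perelman-1}) and now-$C^\alpha$ coefficients, so $L^p$-theory gives $\tilde h\in C^{1,\alpha}$; differentiating the Monge--Amp\`ere equation once and applying Schauder estimates to the resulting uniformly elliptic linear equation for $\partial_k\psi^s_i$ produces the uniform $C^{3,\alpha}$-bound \eqref{c3-estimate}. The regularity halts at $C^{3,\alpha}$ precisely because the control of the Ricci potential afforded by Lemma \ref{lem:perelman-1} only yields $\tilde h\in C^{1,\alpha}$.
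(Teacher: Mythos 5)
Your treatment of \eqref{c0-estimate}, of the lower bound in \eqref{c2-estimate}, and of the passage from \eqref{c2-estimate} to \eqref{c3-estimate} is sound and essentially coincides with the paper's proof: the $C^0$-bound is indeed just \eqref{phi-s-c0}, the lower metric bound is \eqref{upper-metric-2} with \eqref{fd-mretric}, and your elliptic bootstrap for the third estimate (Lipschitz right-hand side via Perelman's gradient bound, then Evans--Krylov $C^{2,\alpha}$, then $W^{2,p}$-theory giving $C^{1,\alpha}$ for the Ricci potential from the scalar curvature bound, then Schauder after differentiating the equation, halting at $C^{3,\alpha}$ because the Ricci potential is only $C^{1,\alpha}$) is exactly the content of \eqref{c1-hi} and the last step of the paper's proof, written out more carefully than the paper does.

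The gap is in the step you single out as the main obstacle, the upper bound in \eqref{c2-estimate}. First, your premise that ``an elliptic second order estimate is unavailable'' and that one must therefore pass to the flow is mistaken, and it leads you past the one-line argument the paper uses: the Monge--Amp\`ere equation you yourself write down for the $C^{3,\alpha}$ step, i.e.\ \eqref{potenial-equation}, has uniformly bounded right-hand side on $\tilde\Omega_\gamma^i$ (by \eqref{phi-s-c0}, Perelman's bound \eqref{h-t-estimate} on $h_{t_i+s}$, and the bounded geometry of $\tilde\omega_i$ there), so the volume ratio $((\Phi_i^{-1})^*\omega_{t_i+s})^n/\tilde\omega_i^n$ is bounded above; combined with the lower bound $g:=(\Phi_i^{-1})^*\omega_{t_i+s}\ge C^{-1}\tilde\omega_i$, pointwise linear algebra on the eigenvalues of $g$ relative to $\tilde\omega_i$ (each eigenvalue is $\ge C^{-1}$ and their product is bounded, hence each is $\le C'$) gives the upper bound with no Schwarz-lemma computation and no maximum principle, for each $s\in[-1,1]$ separately. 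Second, and more seriously, your parabolic argument as stated does not close: a localized parabolic maximum principle on $\tilde\Omega_{\gamma+1}^i\times[-1,s]$ controls the interior maximum of the cut-off quantity $\log\mathrm{tr}_{\tilde\omega_i}g$ only in terms of its maximum on the parabolic boundary, which includes the initial slice $\{s=-1\}$; but a bound on $\mathrm{tr}_{\tilde\omega_i}g(\cdot,-1)$ is precisely an instance of the estimate you are trying to prove, so the argument is circular at the initial time and yields nothing for $s$ near $-1$. This could be repaired by re-basing the flow at $t_i-2$ (extending Proposition \ref{metric-equivalent} to $s\in[-2,1]$, as the paper's own re-basing for $s\in[-1,0]$ indicates) and inserting a time cutoff so that the estimate becomes interior in time, but you do neither; as written, the central estimate of the lemma is not established.
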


 \begin{proof}
 Let $\tilde h_i$ be a Ricci potential of $\tilde\omega_{i}$. Then
  $\psi_i^s$ satisfies the complex Monge-Amp\`ere equation,
 \begin{align}\label{potenial-equation}
 (\tilde\omega_{i}+\sqrt{-1}\partial\bar\partial \psi^s_i)^n=
 e^{-\psi_i^s+\tilde h_i-h_i^s}\tilde\omega_{i}^n, ~{\rm in}~ \tilde M_i,
 \end{align}
 where   $h_i^s=h_{t_i+s}\circ \Phi_i^{-1}$ is the Ricci potential of $(\Phi_i^{-1})^*\omega_{t_i+s}$, which is uniformly bounded by Lemma \ref{lem:perelman-1}. Thus by (\ref{upper-bound}), we get (\ref{c2-estimate}).
  On the other hand, by
  $$\Delta{\omega_{t_i+s}}h_i^s=R_{t_i+s}-n,$$
  we have
  $$|\Delta_{\tilde\omega_{i}}h_i^s|\leq C,~ ~{\rm in}~\tilde \Omega_\gamma^i.$$
  It follows that
  \begin{align}\label{c1-hi}
 \| h^s_i\|_{C^{1,\alpha}(\Omega')} \le C_\gamma( d(\Omega')), ~\forall ~\Omega'\subset\subset\tilde\Omega_\gamma^i,
 \end{align}
 where  $d(\Omega')={\rm dist}(\Omega', \tilde\Omega_\gamma^i).$
 Hence, the regularity of (\ref{potenial-equation})  implies that
 $$\|\psi^s_i \|_{C^{3,\beta}(\Omega') }\le C_\gamma (d(\Omega')).$$
 Since we may replace $\tilde\Omega_\gamma^i$ by some bigger set $\tilde \Omega_{\gamma'}^i$, we get (\ref{c3-estimate}).
 \end{proof}

 We need to improve (\ref{c3-estimate}) in Lemma \ref{c3-phi} to

 \begin{prop}\label{regularity-phi}
 For any $k$ and $\Omega'\subset\tilde {\Omega}_\gamma^i$, it holds
 $$\| \psi_i^s\|_{C^{k,\alpha}(\Omega')} \le A_\gamma( d(\Omega'), k), ~\forall ~s\in [-\frac{1}{2}, 1].$$
 \end{prop}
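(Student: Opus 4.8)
The plan is to recast the statement as an interior regularity problem for a \emph{parabolic} complex Monge--Amp\`ere flow on $\tilde M_i$ and then bootstrap. Since $\Phi_i\colon M\to \tilde M_i\subset\mathbb{CP}^N$ is a biholomorphism onto its (smooth) image and Ricci curvature is natural under biholomorphisms, the pushed-forward metrics $\hat\omega_s:=(\Phi_i^{-1})^*\omega_{t_i+s}$ solve the normalized K\"ahler--Ricci flow on $\tilde M_i$ for $s\in[-1,1]$ (well defined once $t_i>1$). Writing $\hat\omega_s=\tilde\omega_i+\sqrt{-1}\partial\bar\partial\,\psi_i^s$ with the fixed background $\tilde\omega_i=\frac1l\omega_{FS}|_{\tilde M_i}$, a suitable normalization of $\psi_i^s$ (it differs from the one in (\ref{equivalent}) by a spatial constant depending only on $s$, which affects neither $\sqrt{-1}\partial\bar\partial$ nor the bound (\ref{c0-estimate})) satisfies
\begin{align*}
\frac{\partial \psi_i^s}{\partial s}=\log\frac{(\tilde\omega_i+\sqrt{-1}\partial\bar\partial\,\psi_i^s)^n}{\tilde\omega_i^n}+\psi_i^s-\tilde h_i,
\end{align*}
where $\tilde h_i$ is the Ricci potential of $\tilde\omega_i$ appearing in (\ref{potenial-equation}). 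The decisive feature of this reformulation is that the only inhomogeneous term, $\tilde h_i$, is built solely from the Fubini--Study data and is therefore uniformly bounded in every $C^k(\tilde\Omega_\gamma^i)$, because the curvature of $\omega_{FS}|_{\tilde\Omega_\gamma^i}$ is uniformly $C^k$-bounded in $i$.

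Next I would explain why the elliptic equation (\ref{potenial-equation}) cannot by itself give the conclusion, and why the parabolic picture repairs this. In (\ref{potenial-equation}) the right-hand side carries the evolved Ricci potential $h_i^s$, for which Perelman's estimates (Lemma \ref{lem:perelman-1}) control only $\|h_i^s\|_{C^0}$, $\|\nabla h_i^s\|$ and $\|\Delta h_i^s\|_{C^0}$; this yields no more than $h_i^s\in C^{1,\alpha}$ (cf. (\ref{c1-hi})) and hence only $\psi_i^s\in C^{3,\alpha}$ as in Lemma \ref{c3-phi}, so the purely elliptic bootstrap stalls for lack of higher spatial regularity of $h_i^s$. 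In the parabolic formulation the obstruction disappears: differentiating the flow equation shows $\partial_s\psi_i^s=-h_i^s$ up to a constant, so the time derivative of $\psi_i^s$ \emph{is} the Ricci potential, and improving the parabolic regularity of $\psi_i^s$ automatically improves the spatial regularity of $h_i^s$. Thus the higher regularity of the Ricci potential is produced by the flow rather than assumed.

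The bootstrap itself proceeds by interior parabolic Schauder theory on $\tilde\Omega_\gamma^i$, uniform in $i$. The bounds (\ref{c0-estimate}) and (\ref{c2-estimate}) make the flow uniformly parabolic there, and (\ref{c3-estimate}) together with the equation gives a starting bound in the parabolic H\"older class. Differentiating the fully nonlinear equation in a spatial direction $\partial_e$ produces a linear, uniformly parabolic equation
\begin{align*}
\partial_s(\partial_e\psi_i^s)=g_{\psi}^{j\bar k}\,(\partial_e\psi_i^s)_{j\bar k}+\partial_e\psi_i^s+ b_i,
\end{align*}
whose leading coefficients $g_{\psi}^{j\bar k}$ (the inverse of the evolving metric) are H\"older and uniformly elliptic by (\ref{c2-estimate})--(\ref{c3-estimate}), and whose inhomogeneous term $b_i$, assembled from $\partial_e\tilde\omega_i$ and $\partial_e\tilde h_i$, is uniformly smooth. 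Interior parabolic Schauder estimates upgrade $\partial_e\psi_i^s$ by one derivative; iterating on a nested family of subdomains, with the flow started at $s=-1$ so that the estimates hold on the interior time interval $[-\frac12,1]$, yields $\|\psi_i^s\|_{C^{k,\alpha}(\Omega')}\le A_\gamma(d(\Omega'),k)$ for every $k$, with constants depending only on $\gamma$, $d(\Omega')$ and $k$.

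The main obstacle I anticipate is \emph{uniformity in $i$} throughout the bootstrap: every Schauder constant, every ellipticity ratio, and every bound on the background terms $\tilde\omega_i$ and $\tilde h_i$ must be controlled independently of $i$. This is precisely where the uniform $C^k$-control of the Fubini--Study curvature on $\tilde\Omega_\gamma^i$ (from Proposition \ref{q-fano-limit} and the construction of the diffeomorphisms $\Psi_\gamma^i$) and the uniform two-sided bound (\ref{c2-estimate}) are indispensable. A secondary technical point is the correct handling of the spatial and temporal boundaries: the dependence on $d(\Omega')$ records the blow-up of the interior estimates near $\partial\tilde\Omega_\gamma^i$, while starting the flow at $s=-1$ is exactly what permits the final estimate to be asserted on $[-\frac12,1]$ rather than on all of $[-1,1]$.
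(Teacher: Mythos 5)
Your proposal is correct, and it rests on exactly the key insight of the paper's proof: pass to the parabolic Monge--Amp\`ere flow (\ref{s-equation}) on $\tilde M_i$, whose only inhomogeneity is the Fubini--Study Ricci potential $\tilde h_i$ (uniformly $C^k$-bounded on $\tilde\Omega_\gamma^i$), and observe that $\dot\psi_i^s=-h_i^s$, so that the evolved Ricci potential --- precisely the term at which the elliptic bootstrap of Lemma \ref{c3-phi} stalls --- is produced by the flow itself. Where you diverge is in the mechanics of the bootstrap. The paper works with the linear parabolic equation (\ref{equation-flow}) satisfied by the time derivative $\dot\psi_i^s$: starting from $\|\dot\psi_i^s\|_{C^{1,\alpha}}\le C$ (which is (\ref{c1-hi})) and coefficients that are $C^{1,\alpha}$ by (\ref{c3-estimate}), interior parabolic Schauder on $\tilde\Omega_\gamma^i\times[-\tfrac12,1]$ gives $\dot\psi_i^s\in C^{3,\alpha}$; then the flow equation is reread as the \emph{elliptic} Monge--Amp\`ere equation with right-hand side $e^{\dot\psi_i^s-\psi_i^s-\tilde h_i}$, so elliptic regularity gives $\psi_i^s\in C^{5,\alpha}$, and the cycle repeats, gaining two derivatives per iteration by alternating parabolic and elliptic estimates. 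You instead differentiate the flow spatially and apply parabolic Schauder to the linear equation for $\partial_e\psi_i^s$, gaining one derivative per iteration and never invoking elliptic Monge--Amp\`ere regularity; both schemes close, yours being slightly more self-contained and the paper's slightly more economical per cycle. Two small points in your write-up deserve care. First, the inhomogeneous term $b_i$ in your linearized equation is \emph{not} uniformly smooth: it contains $g_\psi^{j\bar k}\,\partial_e\tilde g_{i,j\bar k}$, hence is only as regular as the evolving inverse metric; this does not break the induction, since at each stage $b_i$ has the same regularity as the leading coefficients and Schauder still upgrades $\partial_e\psi_i^s$, but it should be stated that way. Second, both your argument and the paper's quietly need H\"older-in-time regularity of the coefficients $g_\psi^{j\bar k}(\cdot,s)$ to apply parabolic Schauder; this follows from the uniform bound $\|\dot\psi_i^s\|_{C^{1,\alpha}}\le C$ together with the spatial $C^{3,\alpha}$ bound and interpolation, and is worth a sentence.
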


 \begin{proof} As above,  we denote the Ricci potential of $\tilde \omega_i$ by $\tilde h_i$.  Then  it is easy to see that
  $\psi_i^s$ satisfies
 \begin{align}\label{s-equation}
 \dot \psi_i^s=\log \frac{(\omega_{i}+\sqrt{-1}\partial\bar\partial \psi_i^s)^n}{ \tilde\omega_{i}^n } +\psi_i^s
 -\tilde h_i, ~\forall ~(x,s)\in \tilde M_i\times [-1,1].
 \end{align}
 It follows that
 \begin{align}\label{equation-flow}\frac{d}{ds}\dot \psi_i^s=\Delta_{\omega_{i}^s}\dot \psi_i^s+\dot \psi_i^s.
 \end{align}

 Now we consider the restricted  flow  of (\ref{s-equation}),
 \begin{align}\label{s-equation-2} &\dot \psi_i^s=\log(\frac{(\tilde\omega_{i}+\sqrt{-1}\partial\bar\partial \psi^s_i)^n}{\tilde\omega_{i}^n})
 +\psi_i^s+\tilde h_i, ~\forall ~(x,s)\in \tilde \Omega^i_\gamma\times [{-1},1],
 \end{align}
 where $\tilde\omega_{i}$ is uniformly  $C^k$-bounded metric  on $\tilde \Omega_\gamma^i$ and $\psi_i^s$ satisfies (\ref{c3-estimate}).
 We claim that for $\Omega'\subset \subset \Omega_\gamma^i$:
 \begin{align}\label{c3-dot}\|\dot \psi_i^s\|_{C^{3,\alpha}( \Omega') }\le C_3(\gamma,  d(\Omega'))
 \end{align}
 and
 \begin{align}\label{c5}
 \| \psi_i^s\|_{C^{5,\alpha}(\Omega')} \le C_5(\gamma, \Omega'), ~\forall ~s\in [-\frac{1}{2}, 1].
 \end{align}

 From (\ref{c1-hi}), we have
 $$\|\dot \psi_i^s\|_{C^{1,\alpha}(\tilde\Omega_\gamma^i) }\le C_2(\gamma).$$
 By (\ref{c3-estimate}), we see that the coefficient of $\Delta_{\omega_{i}^s}$  is uniformly $C^{1,\alpha}$-bounded. Then applying the regularity for the uniformly parabolic equation (\ref{equation-flow}) in $ \tilde \Omega_\gamma^i\times [\frac{-1}{2},1]$, we get
 (\ref{c3-dot}).
 Thus  we can regarded  (\ref{s-equation-2}) as a complex Monge-Amp\`ere equation with  uniformly $C^{3,\alpha}$-bounded right term  $f$ such that
 $$\frac{(\tilde\omega_{i}+\sqrt{-1}\partial\bar\partial \psi^s_i)^n}{\tilde\omega_{i}^n}=e^{\dot \psi_i^s-\psi_i^s-\tilde h_i}=f, ~{\rm in}~ \tilde \Omega_\gamma^i.$$
  By  the regularity for the uniformly elliptic equation, we also obtain  (\ref{c5}) immediately.

 Repeating the above argument,  we will prove Proposition \ref{regularity-phi}.

 \end{proof}

 Since $ \Psi_i^*\tilde {\omega}_i$ is locally  uniformly $C^k$-convergent to  $\tilde\omega_\infty$,
   the Ricci potential  $\tilde h_i$ of $\tilde \omega_i$  converges to a smooth function $\tilde h_\infty$ on ${\rm reg}(\tilde M_\infty)$.
  On the other hand,  by  taking a diagonal subsequence of $ \psi_i^s(\Psi_\gamma)|_{\Omega_\gamma}$  in Proposition \ref{regularity-phi} for any $s\in  [-\frac{1}{2}, 1]$,  we get a smooth function $\psi_\infty$ on ${\rm reg}(\tilde M_\infty)$.
    Thus by  the equation (\ref{s-equation}),   the Ricci potential  $\dot \psi_i^s $ of $\omega_{t_i+s}$  is also  converges to
    a smooth limit  function $ h_\infty$ on ${\rm reg}(\tilde M_\infty)$.  Moreover,  $h_\infty$ is uniformly bounded in $\tilde M_\infty.$
   Hence, we get

 \begin{cor}\label{h-limit}    $\psi_\infty$ satisfies the following  complex Monge-Amp\`ere equation in  ${\rm reg}(\tilde M_\infty)$,
   \begin{align}\label{limit-equ}
   (\tilde\omega_{\infty}+\sqrt{-1}\partial\bar\partial \psi_\infty)^n= e^{\tilde h_\infty-\psi_\infty-h_\infty}  \tilde\omega_{\infty}^n.
 \end{align}

 \end{cor}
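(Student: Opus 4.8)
The plan is to obtain (\ref{limit-equ}) by passing to the limit in the parabolic complex Monge-Amp\`ere equation (\ref{s-equation}) satisfied by $\psi_i^s$, on the regular part of $\tilde M_\infty$. The whole point is that Proposition \ref{regularity-phi} supplies enough uniform interior regularity to justify commuting the limit with the operator $\pbp$, so that $\psi_\infty$ inherits the \emph{pointwise} equation rather than merely a weak (current) identity.

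First I would fix the exhaustion $\Omega_\gamma\subset\tilde M_\infty$ together with the diffeomorphisms $\Psi_\gamma^i:\Omega_\gamma\to\tilde\Omega_\gamma^i$ of Section 3 and transplant everything to $\Omega_\gamma$ by pullback. By Proposition \ref{regularity-phi}, the functions $(\Psi_\gamma^i)^*\psi_i^s$ are bounded in $C^{k,\alpha}(\Omega')$ uniformly in $i$ and in $s\in[-\tfrac12,1]$, for every $k$ and every $\Omega'\subset\subset\Omega_\gamma$. A standard Arzel\`a-Ascoli argument, followed by a diagonal extraction over $\gamma$ and over a countable dense set of parameters $s$, then yields a subsequence along which $(\Psi_\gamma^i)^*\psi_i^s$ converges in $C^\infty_{\rm loc}({\rm reg}(\tilde M_\infty))$ to the smooth limit $\psi_\infty$ already introduced before the statement.

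Next I would invoke the three companion convergences recorded just above the corollary: $(\Psi_\gamma^i)^*\tilde\omega_i\to\tilde\omega_\infty$ in $C^k_{\rm loc}$, the Ricci potentials $\tilde h_i\to\tilde h_\infty$ smoothly, and $\dot\psi_i^s\to h_\infty$ smoothly with $h_\infty$ bounded (the last of these again follows from uniform interior estimates for $\dot\psi_i^s$, namely (\ref{c3-dot}) and its higher-order bootstrap). Because all these convergences hold in $C^k_{\rm loc}$ with $k\ge 2$, on each $\Omega'\subset\subset\Omega_\gamma$ the Hessian term converges, $(\tilde\omega_i+\pbp\psi_i^s)^n/\tilde\omega_i^n\to(\tilde\omega_\infty+\pbp\psi_\infty)^n/\tilde\omega_\infty^n$, uniformly. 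Passing to the limit term by term in (\ref{s-equation}) and rearranging then gives precisely (\ref{limit-equ}) on ${\rm reg}(\tilde M_\infty)$.

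The one genuine obstacle is exactly this commuting of the limit with $\pbp$: a weak limit would only let us pass to the limit in the \emph{currents} $\pbp\psi_i^s$, not in the \emph{volume forms} $(\tilde\omega_i+\pbp\psi_i^s)^n$, and one could not then conclude that $\psi_\infty$ solves the pointwise equation on the regular part. This is exactly what the uniform local $C^{k,\alpha}$-estimates of Proposition \ref{regularity-phi}, valid uniformly in both $i$ and $s$, are designed to overcome; once they are in hand the remainder is the routine passage to the limit sketched above.
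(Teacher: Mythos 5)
Your proposal is correct and takes essentially the same route as the paper: the paper's (implicit) proof is exactly the paragraph preceding the corollary, which uses the uniform local $C^{k,\alpha}$-estimates of Proposition \ref{regularity-phi} (together with the local $C^k$-convergence of $\tilde\omega_i$ and $\tilde h_i$) to extract a diagonal $C^\infty_{\rm loc}$-limit of $\psi_i^s$ and $\dot\psi_i^s$ on ${\rm reg}(\tilde M_\infty)$, and then passes to the pointwise limit in (\ref{s-equation}). The only caveat, which your write-up shares with the paper's own wording, is the sign convention: for the exponent in (\ref{limit-equ}) to come out as $-h_\infty$, the function $h_\infty$ must be the limit of the actual Ricci potentials $h_{t_i+s}$ (equal to $-\dot\psi_i^s$ up to uniformly bounded constants, as in (\ref{potenial-equation})), not of $\dot\psi_i^s$ itself.
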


 Next section, we will show  that $(\omega_{\infty}+\sqrt{-1}\partial\bar\partial \psi_\infty)$ is a K\"ahler-Ricci soliton on ${\rm reg}(\tilde M_\infty)$ by using the monotonicity of Perelman's entropy $\lambda(\cdot)$ \cite{Pe}.

 \section{Proof of Theorem \ref{WZ} of Corollary  \ref{WZ-2}}

 Perelman's entropy $\lambda(\cdot)$ is based  on his  $\mathcal W$-functional \cite{Pe}. In our case, for $\omega_g\in 2\pi c_1(M, J)$,
 $\mathcal W$-functional can be expressed as (cf. \cite{TZZZ}),
 $$\mathcal W(g,f)=(2\pi)^{-n}\int_M (R+|\nabla f|^2+f)e^{-f} \omega_g^n,$$
 where $R(g)$ is a scalar curvature of $g$ and $(g,f)$  satisfies
  a normalization condition
 \begin{equation}
 \label{norm-1} \int_M e^{-f} \omega_g^n\,= \int_M (2\pi c_1(M))^n=V.
 \end{equation}
   Then
 $\lambda(g)$ is defined by
 \begin{equation*}
 \lambda(g)=\inf_{f}\{\mathcal W(g,f)| ~(g,f)~\text{satisfies}~ (\ref{norm-1})
 \}.
 \end{equation*}

 It is well-known that $\lambda(g)$ can be attained by some smooth function $f$ (cf. \cite{Ro}).
 In fact, such a $f$ satisfies the Euler-Lagrange equation of $\mathcal W(g,\cdot)$,
 \begin{equation}\label{EulerLagrange}
 \triangle f+f+\frac{1}{2}(R-|\nabla
 f|^{2})=(2\pi)^{n}V^{-1}\lambda(g).
 \end{equation}
 Following Perelman's computation in \cite{Pe}, we can deduce the first
 variation of $\lambda(g)$,
 \begin{equation}\label{variation}
 \delta\lambda(g)=-(2\pi)^{-m/2}\int_{M}<\delta g,
 \mathrm{Ric}(g)-g+\nabla^{2}f>e^{-f}\omega_{g}^n,
 \end{equation}
 where $\mathrm{Ric}(g)$ denotes the Ricci tensor of $g$ and
 $\nabla^{2} f$ is the Hessian of $f$. Hence, $g$ is a critical point
 of $\lambda(\cdot)$ in $2\pi c_1(M, J)$  if and only if $g$ is a gradient shrinking
 K\"ahler-Ricci soliton which satisfies
 \begin{equation}\label{RicciSolitons}
 \mathrm{Ric}(\omega_g)+\sqrt{-1}\partial\bar\partial f=\omega_g, ~\partial\partial f=0,
 \end{equation}
 where $f$ is  a  minimizer of $\mathcal W(g,\cdot)$.
 Namely, $\omega$ satisfies (\ref{kr-soliton}) with the holomorphic vector field
 $$v=g^{i\bar j}(-f)_{ \bar z^j }\frac{\partial }{\partial  z^i}.$$

 Denote the minimizer of evolved metric $g_t$  in (\ref{kr-flow}) by $f_t$.  Then by  (\ref{variation}), we get
 \begin{align}\label{derivative-lambda}\frac{d\lambda(g_t)}{dt}=\int_M ( |\partial\bar\partial (h_t+f_t)|^2
 + |\partial\partial f_t|^2)  \omega_{g_t}^n.
 \end{align}
 In particular,  $\lambda(g_t)=\mathcal W(g_t,f_t)$ is non-decreasing.

 The following lemma is due to \cite[Theorem 7.1]{TZZZ}.

 \begin{lem}\label{mini}
 There is a uniform constant $C$ such that
 \begin{align}\label{f-estimate}\|f_t\|_{C^0}+\|\nabla f_t\|_{\omega_t}+\|\triangle_{\omega_t}  f_t\|_{C^0}\le C, ~~\forall ~t>0.
 \end{align}
 \end{lem}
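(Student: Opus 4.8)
The plan is to pin down the minimizer $f_t$ of $\mathcal{W}(g_t,\cdot)$ by combining the Euler--Lagrange equation (\ref{EulerLagrange}) with Perelman's estimates from Lemma \ref{lem:perelman-1} and Zhang's uniform Sobolev inequality \cite{Zh}. First I would record two uniform preliminary bounds. Writing $\mu_t=(2\pi)^n V^{-1}\lambda(g_t)$ for the constant on the right of (\ref{EulerLagrange}), an upper bound on $\lambda(g_t)$ comes from the admissible test function $f\equiv 0$ (admissible because $\omega_t\in 2\pi c_1(M,J)$ forces $\int_M\omega_t^n=V$, so (\ref{norm-1}) holds): this gives $\lambda(g_t)\le\mathcal{W}(g_t,0)=(2\pi)^{-n}\int_M R\,\omega_t^n$, and the last integral is determined by $2\pi c_1(M,J)$ alone, hence independent of $t$. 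Since $\lambda(g_t)$ is non-decreasing by (\ref{derivative-lambda}), it is also bounded below by $\lambda(g_0)$, so $|\mu_t|\le C$. For the scalar curvature I would trace ${\rm Ric}(\omega_t)-\omega_t=\sqrt{-1}\partial\bar\partial h_t$, which expresses $R$ up to an additive constant as $\Delta_{\omega_t}h_t$; the third bound in (\ref{h-t-estimate}) then yields $\|R\|_{C^0}\le C$ uniformly in $t$.

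The heart of the matter is the uniform two-sided $C^0$ bound on $f_t$, which I would obtain through the logarithmic Sobolev reformulation. Setting $w=e^{-f_t/2}>0$, the constraint becomes $\int_M w^2\,\omega_t^n=V$ and (\ref{EulerLagrange}) turns into a semilinear elliptic equation of the schematic form $\Delta w=a_t\,w-w\log w^2$, where $a_t$ is assembled from $R$, a constant, and $\mu_t$ and is therefore uniformly bounded in $C^0$ by the previous step; the virtue of this substitution is that it removes the indefinite $-|\nabla f_t|^2$ term. An upper bound $w\le C$, i.e. $f_t\ge -C$, follows by Moser iteration: testing against powers of $w$, feeding the result into the uniform Sobolev inequality of \cite{Zh}, and absorbing the nonlinear term via the elementary inequality $\log w^2\le\epsilon\,w^\theta+C_\epsilon$ with $\theta$ a small exponent below the Sobolev exponent. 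The reverse estimate, a uniform positive lower bound $w\ge c>0$, i.e. $f_t\le C$, is the delicate one and is inaccessible to the maximum principle; here I would invoke the boundedness theory for minimizers of the logarithmic Sobolev functional (Rothaus \cite{Ro}), whose constants depend only on the Sobolev constant and $\|R\|_{C^0}$, both uniform in $t$. Together these give $\|f_t\|_{C^0}\le C$.

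With the $C^0$ bound secured, the remaining two quantities follow from (\ref{EulerLagrange}). For the gradient I would run the Bochner argument familiar from gradient shrinking solitons: apply the Laplacian to $|\nabla f_t|^2$, substitute $\Delta f_t=\mu_t-f_t-\tfrac12 R+\tfrac12|\nabla f_t|^2$ from (\ref{EulerLagrange}), and apply the maximum principle to $|\nabla f_t|^2$ (or to an auxiliary combination $|\nabla f_t|^2+\kappa f_t$). The deviation of $g_t$ from a genuine soliton that appears in this computation is exactly the tensor $\sqrt{-1}\partial\bar\partial(h_t+f_t)$ (together with $\partial\partial f_t$) whose $L^2$-norm drives the monotonicity (\ref{derivative-lambda}); I would control it pointwise using the bounds (\ref{h-t-estimate}) on $h_t$ and the fact that, on a soliton, $f_t$ reduces to $-h_t$. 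Once $\|\nabla f_t\|_{\omega_t}\le C$, the Laplacian bound is immediate: reading (\ref{EulerLagrange}) as $\Delta f_t=\mu_t-f_t-\tfrac12 R+\tfrac12|\nabla f_t|^2$, every term on the right is now uniformly bounded, so $\|\Delta_{\omega_t}f_t\|_{C^0}\le C$.

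The step I expect to be the main obstacle is the upper bound on $f_t$, equivalently the strict positivity $w\ge c>0$ of the entropy minimizer: unlike the lower bound on $f_t$, it is not reachable by a maximum principle and requires the finer variational/Harnack input of \cite{Ro}. All the uniformity in $t$ ultimately rests on two facts that are available here but not in a general flow, namely that the Sobolev constant of $(M,\omega_t)$ and the scalar curvature $\|R\|_{C^0}$ are bounded independently of $t$, supplied respectively by \cite{Zh} and Lemma \ref{lem:perelman-1}.
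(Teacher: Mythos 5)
The first thing to note is that the paper offers no proof of this lemma at all: it is quoted directly from \cite[Theorem 7.1]{TZZZ} (``The following lemma is due to...''), so your proposal must be judged as a reconstruction of that cited argument. Your preliminary steps are correct and are indeed the standard ingredients: the two-sided bound on $\lambda(g_t)$ (the competitor $f\equiv 0$ plus monotonicity from (\ref{derivative-lambda})), the uniform scalar curvature bound obtained by tracing ${\rm Ric}(\omega_t)-\omega_t=\sqrt{-1}\partial\bar\partial h_t$ against Perelman's estimate (\ref{h-t-estimate}), and the Moser iteration for $w=e^{-f_t/2}\le C$ (i.e. $f_t\ge -C$) based on Zhang's uniform Sobolev inequality \cite{Zh} and the constraint $\int_M w^2\omega_t^n=V$. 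You also correctly observe that neither $C^0$ bound is accessible to the maximum principle applied to (\ref{EulerLagrange}) or (\ref{EulerLagrange-2}), and that the Laplacian bound is免 immediate once the $C^0$ and gradient bounds are known.

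The genuine gap is the gradient estimate. The Bochner formula for $|\nabla f_t|^2$ produces the term ${\rm Ric}(\nabla f_t,\nabla f_t)$, and a maximum-principle argument requires a uniform pointwise lower bound on ${\rm Ric}(\omega_t)=\omega_t+\sqrt{-1}\partial\bar\partial h_t$, i.e. a pointwise bound on the complex Hessian of $h_t$. Lemma \ref{lem:perelman-1} gives only $\|h_t\|_{C^0}$, $\|\nabla h_t\|_{\omega_t}$ and $\|\Delta h_t\|_{C^0}$ --- no Hessian bound --- and the tensor $\sqrt{-1}\partial\bar\partial(h_t+f_t)$ you invoke is controlled by (\ref{derivative-lambda}) only in $L^2$ and only after integration in time, so at a fixed $t$ it is not bounded by any uniform quantity, let alone pointwise; the remark that ``on a soliton $f_t$ reduces to $-h_t$'' is not usable because $g_t$ is not a soliton. (If a uniform pointwise Ricci bound were available along the flow, the Hamilton--Tian conjecture itself would largely trivialize, so it cannot be an input here.) A correct argument must stay at the level of integral estimates: for instance, in a Moser iteration for $|\nabla v|^2$, $v=e^{-f_t/2}$, the dangerous term $\int h_{i\bar j}\,v^{\bar i}v^{j}\phi\,\omega_t^n$ is integrated by parts so that $\partial\bar\partial h_t$ is traded for the bounded $\nabla h_t$, and the resulting second-derivative terms are absorbed into the good $|\nabla^2 v|^2$ term of the Bochner identity; the same device handles the $\nabla R$ term coming from differentiating the Euler--Lagrange equation, since $|\nabla R|$ is likewise not bounded by Perelman's estimates. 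A secondary weak point is the bound $f_t\le C$: you outsource it to Rothaus \cite{Ro}, but Rothaus works on a fixed manifold, and the assertion that his constants depend only on the Sobolev constant and $\|R\|_{C^0}$ is exactly the uniformity that has to be proved; as written, this step assumes the conclusion rather than deriving it.
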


 We note that (\ref{EulerLagrange}) is equivalent  to
 \begin{equation}\label{EulerLagrange-2}
 \Delta v_t-\frac{1}{2}f_tv_t-\frac{1}{4}R(g_t)v_t=\frac{1}{2V}(2\pi)^{n}\lambda(g_t)v_t,
 \end{equation}
  where $v_t=e^{\frac{-f_t}{2}}$.
  Consider the  restricted equation of  (\ref{EulerLagrange-2}) on each $\tilde\Omega_\gamma^i$ for  metrics $\omega_{t_i+s}$ in Proposition \ref{regularity-phi}.
 Then by Lemma \ref{mini} and Proposition \ref{regularity-phi},   we get the higher order estimate  for $f_{t_i+s}$,
 \begin{align}\label{f-ck}\| f_{t_i+s}\|_{C^{k,\alpha}(\Omega')} \le A_\gamma( d(\Omega'), k), ~\forall ~s\in [\frac{1}{2}, 1].
 \end{align}

 \begin{lem}\label{limit-soliton}  There is a sequence of $s_i\in [\frac{1}{2}, 1]$ such that
  $((\Phi_i\cdot\Psi_\gamma^i)^{-1})^*\omega_{t_i+s_i}$ converges to a K\"ahler-Ricci soliton  $\omega_\infty$ on ${\rm reg}(\tilde M_\infty)$ in $C^\infty$-topology.
 \end{lem}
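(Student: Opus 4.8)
The plan is to combine the monotonicity of Perelman's entropy $\lambda(g_t)$ with the local higher regularity already established in Proposition \ref{regularity-phi} and in (\ref{f-ck}).

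First I would record that, by (\ref{derivative-lambda}), $\lambda(g_t)$ is non-decreasing, while a standard consequence of the uniform estimates (Lemma \ref{lem:perelman-1} and Lemma \ref{mini}) is that $\lambda(g_t)$ is bounded above; hence $\lambda(g_t)\to\lambda_\infty$ for a finite $\lambda_\infty$. Integrating (\ref{derivative-lambda}) in $t$ gives
$$\int_0^\infty\!\!\int_M\big(|\partial\bar\partial(h_t+f_t)|^2+|\partial\partial f_t|^2\big)\,\omega_{g_t}^n\,dt=\lambda_\infty-\lambda(g_0)<\infty.$$
In particular $\int_{t_i+1/2}^{t_i+1}\big(\int_M(\cdots)\,\omega_{g_s}^n\big)\,ds=\lambda(g_{t_i+1})-\lambda(g_{t_i+1/2})\to0$, so by the mean value theorem there is $s_i\in[\tfrac12,1]$ for which
$$\int_M\big(|\partial\bar\partial(h_{t_i+s_i}+f_{t_i+s_i})|^2+|\partial\partial f_{t_i+s_i}|^2\big)\,\omega_{g_{t_i+s_i}}^n\longrightarrow0.$$

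Next I would promote this $L^2$-smallness to smooth convergence. By Proposition \ref{regularity-phi} applied to $\psi_i^{s_i}$ and by (\ref{f-ck}) applied to $f_{t_i+s_i}$, the pullbacks under $\Psi_\gamma^i$ of $\psi_i^{s_i}$, of $f_{t_i+s_i}$ and of the Ricci potentials are uniformly $C^{k,\alpha}(\Omega')$-bounded for every $\Omega'\subset\subset\Omega_\gamma$ and every $k$. Arzel\`a--Ascoli together with a diagonal argument over an exhaustion $\{\Omega_\gamma\}$ of ${\rm reg}(\tilde M_\infty)$ then gives, after passing to a further subsequence, $C^\infty_{\rm loc}$-limits $\psi_\infty$, $f_\infty$, $h_\infty$; thus $((\Phi_i\cdot\Psi_\gamma^i)^{-1})^*\omega_{t_i+s_i}\to\omega_\infty:=\tilde\omega_\infty+\sqrt{-1}\partial\bar\partial\psi_\infty$ in $C^\infty_{\rm loc}$ on ${\rm reg}(\tilde M_\infty)$, with $h_\infty$ the Ricci potential of $\omega_\infty$. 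To conclude I would pass the defect to the limit: for any compact $K\subset{\rm reg}(\tilde M_\infty)$, the $C^\infty_{\rm loc}$-convergence and Fatou's lemma give
$$\int_K\big(|\partial\bar\partial(h_\infty+f_\infty)|^2+|\partial\partial f_\infty|^2\big)\,\omega_\infty^n\le\liminf_{i\to\infty}\int_M\big(\cdots\big)\,\omega_{g_{t_i+s_i}}^n=0.$$
As $K$ is arbitrary, $\partial\partial f_\infty=0$ and $\partial\bar\partial(h_\infty+f_\infty)=0$ on ${\rm reg}(\tilde M_\infty)$. The former says $v=g_\infty^{i\bar j}(-f_\infty)_{\bar z^j}\frac{\partial}{\partial z^i}$ is a holomorphic vector field, and the latter gives $h_\infty=-f_\infty+{\rm const}$, so that ${\rm Ric}(\omega_\infty)-\omega_\infty=\sqrt{-1}\partial\bar\partial h_\infty=L_v\omega_\infty$; that is, $\omega_\infty$ solves (\ref{kr-soliton}) and is a K\"ahler-Ricci soliton on ${\rm reg}(\tilde M_\infty)$.

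The main obstacle is exactly the passage from the global (on $M$) $L^2$-smallness of the soliton defect to a pointwise soliton identity on the only-locally-controlled regular limit: this is where Proposition \ref{regularity-phi} and (\ref{f-ck}) are indispensable, since they upgrade the $L^2$-smallness into genuine $C^\infty_{\rm loc}$-convergence. One must further check that the limiting Ricci potential $h_\infty$ of $\omega_\infty$ really agrees with the limit of the $h_{t_i+s_i}$, and that the additive normalizations of $h_t$ and $f_t$ entering (\ref{derivative-lambda}) are matched, so that $h_\infty+f_\infty$ being constant does produce the soliton equation rather than merely a closed condition.
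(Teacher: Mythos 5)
Your proposal follows essentially the same route as the paper's proof: selecting $s_i\in[\tfrac12,1]$ from the monotone, bounded entropy via (\ref{derivative-lambda}), upgrading to $C^\infty_{\rm loc}$-convergence through Proposition \ref{regularity-phi} and (\ref{f-ck}), and passing the vanishing $L^2$-defect to the limit to get $\partial\partial f_\infty=0$ and $h_\infty+f_\infty=\mathrm{const}$, hence the soliton equation (\ref{kr-soliton-limit}). Your sign convention $h_t+f_t$ is the one consistent with (\ref{derivative-lambda}) and with the conclusion (\ref{h-f}); the displayed defect $|\partial\bar\partial(h_{t_i+s_i}-f_{t_i+s_i})|^2$ in the paper's own proof appears to be a typo, and your explicit mean-value and Fatou steps merely flesh out what the paper leaves implicit.
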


 \begin{proof}Since $\lambda(g_t)$ is uniformly bounded,  by (\ref{derivative-lambda}), it is easy to see that there is a sequence of $s_i\in [\frac{1}{2}, 1]$ such that
 $$\int_M ( |\partial\bar\partial (h_{t_i+s_i}-f_{t_i+s_i})|^2
 + |\partial\partial f_{t_i+s_i}|^2) dv\to 0,~ {\rm as}~i\to \infty.$$
 By the regularities in  Proposition \ref{regularity-phi} and (\ref{f-ck}), we may assume that  $f_{t_i+s_i}$ converges a smooth function $f_\infty$ on ${\rm reg}(\tilde M_\infty)$  while  $h_{t_i+s_i}$ converges to  $h_\infty$ as in
 Corollary \ref{h-limit}. Thus we get
 \begin{align}\label{h-f}f_\infty=-h_\infty+const., ~\partial\partial f_\infty=0, ~{\rm on}~ {\rm reg}(\tilde M_\infty).
 \end{align}
 The second relation implies that
 \begin{align}\label{x-vector} v=g_\infty^{i\bar j}( h_\infty)_{ \bar z^j }\frac{\partial }{\partial  z^i}
 \end{align}
 is a holomorphic vector field on  ${\rm reg}(\tilde M_\infty)$. Moreover,
 \begin{align}\label{kr-soliton-limit}\mathrm{Ric}(\omega_\infty)-\omega_\infty=\sqrt{-1}\partial\bar\partial h_\infty=L_v(\omega_\infty),
 \end{align}
 where $\omega_\infty=\tilde\omega_{\infty}+\sqrt{-1}\partial\bar\partial \psi_\infty$ and $\psi_\infty$ is the limit of
 $\psi_i^{s_i}$ as in Corollary  \ref{h-limit}. Hence,  $\omega_\infty$ is a K\"ahler-Ricci soliton on
 ${\rm reg}(\tilde M_\infty)$.

 \end{proof}

 For a   $L^\infty$-solution $\psi_\infty$ of (\ref{limit-equ})  as in Corollary \ref{h-limit}, we can regard it as a global   K\"ahler potential in $\tilde M_\infty$ by
 \begin{align}
 &\int_{\tilde M_\infty}    (\tilde\omega_{\infty}+\sqrt{-1}\partial\bar\partial \psi_\infty)^n
 = \int_{{\rm Reg} (\tilde M_\infty)}  (\tilde\omega_{\infty}+\sqrt{-1}\partial\bar\partial \psi_\infty)^n
 \notag\\
 &=\int_{\hat M_\infty}  (\pi^*(\tilde\omega_{\infty})+\sqrt{-1}\partial\bar\partial \psi_\infty(\pi))^n,
    \notag
    \end{align}
 where $\pi: \hat M_\infty\to \tilde M_\infty$  is a resolution  of   $\tilde M_\infty$.  By choosing a  sequence  of deceasing smooth functions $\psi_i\to \psi_\infty(\pi)$ on $\hat M_\infty$, we see that
 \begin{align}\label{full-mass}
  &\int_{\tilde M_\infty}    (\tilde\omega_{\infty}+\sqrt{-1}\partial\bar\partial \psi_\infty)^n\notag\\
 &= \lim_i\int_{\hat M_\infty}  (\pi^*(\tilde\omega_{\infty})+\sqrt{-1}\partial\bar\partial \psi_i)^n
   =\int_{\tilde M_\infty} \tilde\omega_\infty^n.
 \end{align}
 Thus     $\psi_\infty$  has full mass  \cite{BBEGZ}.

 \begin{prop}\label{q-fan}    The normal variety $\tilde M_\infty\subset \mathbb CP^N$ in Proposition  \ref{q-fano-limit} is  $Q$-Fano with $klt$- singularities
 and  the  $L^\infty$-solution $\psi_\infty$ of (\ref{limit-equ})  in Corollary \ref{h-limit}  has full mass with
  \begin{align}\label{volume-equiv}\int_{\tilde M_\infty} (\tilde\omega_\infty+\sqrt{-1} \partial\bar\partial \phi)^n
 =\int_{ M} \omega_0^n=V.
 \end{align}

 \end{prop}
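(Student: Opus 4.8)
The plan is to settle the three assertions—ampleness of the $\mathbb{Q}$-Cartier divisor $-K_{\tilde M_\infty}$, the klt property, and the volume normalization (\ref{volume-equiv})—by combining the algebraic convergence of Proposition~\ref{q-fano-limit}, the full-mass identity recorded in (\ref{full-mass}), and the limiting soliton equation (\ref{limit-equ}). I would begin with the volume identity, which is quickest. The full-mass computation (\ref{full-mass}) already yields $\int_{\tilde M_\infty}\omega_\infty^n=\int_{\tilde M_\infty}\tilde\omega_\infty^n$, so it remains to evaluate the right-hand side. Since $\tilde\omega_\infty=\frac1l\omega_{FS}|_{\tilde M_\infty}$ and $[\omega_{FS}]=2\pi c_1(\mathcal{O}(1))$, one has $\int_{\tilde M_\infty}\tilde\omega_\infty^n=l^{-n}(2\pi)^n\deg\tilde M_\infty$. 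The maps $\Phi_i$ are all $l$-th anticanonical Kodaira embeddings of $M$, so $\deg\tilde M_i=(c_1(K_M^{-l}))^n=l^n c_1(M)^n$; as $\tilde M_i\to\tilde M_\infty$ as cycles the degree is preserved, whence $\int_{\tilde M_\infty}\tilde\omega_\infty^n=(2\pi)^n c_1(M)^n=\int_M\omega_0^n=V$, which is (\ref{volume-equiv}).

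Next I would treat the $\mathbb{Q}$-Fano structure. Each $\tilde M_i=\Phi_i(M)$ is projectively equivalent to the embedding of $M$ by $H^0(M,K_M^{-l})$, so $\mathcal{O}(1)|_{\tilde M_i}\cong(-K_{\tilde M_i})^{\otimes l}$. Passing to the normal limit $\tilde M_\infty$ and extending this isomorphism of reflexive sheaves across the singular set, one obtains $\mathcal{O}(1)|_{\tilde M_\infty}\cong(-K_{\tilde M_\infty})^{\otimes l}$; since $\mathcal{O}(1)$ is ample and Cartier, $-K_{\tilde M_\infty}=\frac1l\,\mathcal{O}(1)$ is an ample $\mathbb{Q}$-Cartier divisor.

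For the klt property I would exploit the cancellation of the Ricci potential in (\ref{limit-equ}). Because $\tilde h_\infty$ is the Ricci potential of $\tilde\omega_\infty$, the measure $\mu_{FS}:=e^{\tilde h_\infty}\tilde\omega_\infty^n$ has Ricci form equal to $\tilde\omega_\infty$, i.e.\ it is, up to a constant, the volume form of the smooth Hermitian metric $\|\cdot\|_{FS}^{1/l}$ on $-K_{\tilde M_\infty}$. Substituting this into (\ref{limit-equ}) eliminates $\tilde h_\infty$ and gives $\omega_\infty^n=c\,e^{-\psi_\infty-h_\infty}\,\mu_{FS}$ on ${\rm Reg}(\tilde M_\infty)$, with $\psi_\infty$ and $h_\infty$ both $L^\infty$-bounded. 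Since $\int_{\tilde M_\infty}\omega_\infty^n=V<\infty$ by the first step, it follows that $\int_{\tilde M_\infty}\mu_{FS}<\infty$. As $\mu_{FS}$ is the volume form of a smooth metric on $-K_{\tilde M_\infty}$, finiteness of its mass is intrinsic to $\tilde M_\infty$: pulling back by a log resolution $\pi:\hat M_\infty\to\tilde M_\infty$ with $K_{\hat M_\infty}=\pi^*K_{\tilde M_\infty}+\sum_j a_jE_j$, the pulled-back measure behaves like $\prod_j|z_{E_j}|^{2a_j}$ times a smooth volume, so finite mass forces $a_j>-1$ for every exceptional $E_j$; that is, $\tilde M_\infty$ has klt singularities, completing the $\mathbb{Q}$-Fano claim.

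The hard part, I expect, is the last step: one must justify that the Ricci-potential cancellation produces a genuine equality of measures $\omega_\infty^n=c\,e^{-\psi_\infty-h_\infty}\mu_{FS}$, rather than merely an equality modulo a possibly unbounded global pluriharmonic factor on the noncompact set ${\rm Reg}(\tilde M_\infty)$, and that the bounded metric $\|\cdot\|_{FS}^{1/l}e^{-\psi_\infty}$ on $-K_{\tilde M_\infty}$ is comparable to a smooth reference, so that finiteness of mass is equivalent to klt. Controlling the behaviour across the singular set, where $\tilde h_\infty$ itself may blow up, is the delicate point; this is exactly where the $L^\infty$-bounds on $\psi_\infty,h_\infty$ and the full-mass property (in the sense of \cite{BBEGZ}) do the essential work.
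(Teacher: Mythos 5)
Your proposal is correct in substance and its skeleton matches the paper's --- volume identity from (\ref{full-mass}), $Q$-Fano from identifying $K^{-l}_{\tilde M_\infty}$ with $\mathcal O_{\mathbb CP^N}(1)|_{\tilde M_\infty}$, klt from an integrability criterion --- but the two middle steps are executed differently, in ways worth recording. For the $Q$-Fano claim the paper argues with currents instead of sheaves: using the $L^\infty$ bound on $\psi_\infty$ and Ding--Tian's $L^3$ bound on $\tilde h_\infty$, it promotes the Ricci-potential equation (\ref{current-omega-1}) to a global equation of currents on $\tilde M_\infty$ and reads off that $K^{-l}_{\tilde M_\infty}$ is the restriction of $\mathcal O_{\mathbb CP^N}(1)$; your extension of the reflexive-sheaf isomorphism across the singular set (which has codimension at least two in the normal limit) is the algebraic counterpart, and both rest on the same input, namely that the limits of the embedding sections identify the two bundles over ${\rm Reg}(\tilde M_\infty)$. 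For klt the divergence is real: the paper bounds the soliton potential $\tilde\theta_v=h_\infty-v(\psi_\infty)$ through the gradient estimates (\ref{derivative}) and (\ref{x-potential}) and then invokes the argument of \cite[Proposition 3.8]{BBEGZ}, to the effect that a full-mass current solution of (\ref{current-omega-1}) on a $Q$-Fano variety forces klt singularities; you instead absorb the soliton term into the bounded factor $e^{-h_\infty}$ and run the underlying criterion by hand, via finiteness of the adapted Fubini--Study measure and the log-resolution discrepancy computation. Your version is more self-contained and never needs to control $\theta_v$ or $v(\psi_\infty)$; the paper's version outsources the resolution argument to \cite{BBEGZ} but must establish those extra bounds.

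The one obligation you flag without closing --- that $e^{\tilde h_\infty}\tilde\omega_\infty^n$ really equals a constant multiple of the FS-adapted measure on ${\rm Reg}(\tilde M_\infty)$, with no unbounded pluriharmonic factor --- is genuine, but it can be discharged using the compact approximants. On each $\tilde M_i\cong M$ the same identity with $\tilde h_i$ in place of $\tilde h_\infty$ holds exactly up to a constant $c_i$, because pluriharmonic functions on a compact manifold are constant and the identification $K^{-l}_{\tilde M_i}\cong\mathcal O(1)|_{\tilde M_i}$ is furnished by the embedding $\Phi_i$ itself. Evaluating both sides at a fixed point of ${\rm Reg}(\tilde M_\infty)$ and using the local $C^\infty$ convergence $\tilde h_i\to\tilde h_\infty$ and $\tilde\omega_i\to\tilde\omega_\infty$ shows that the constants $c_i$ converge to some $c>0$, so the identity passes to the limit on ${\rm Reg}(\tilde M_\infty)$. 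With this inserted your proof is complete, and in substance it is the same mechanism that the paper accesses through the citation of \cite{BBEGZ}.
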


 \begin{proof} By  (\ref{limit-equ}) and (\ref{kr-soliton-limit}), it is easy to see that
   \begin{align}\label{current-omega-1}\mathrm{Ric}(\tilde\omega_\infty)-\tilde\omega_\infty=\sqrt{-1}\partial\bar\partial \tilde h_\infty,
    ~{\rm in}~ {\rm reg}(\tilde M_\infty).
    \end{align}
 Moreover, by a result of Ding-Tian \cite{DT},  $\tilde h_\infty\in L^3(\tilde M_\infty)$.    Since $\psi_\infty$ is uniformly bounded on   ${\rm reg}(\tilde M_\infty)$,
  $$ \omega_\infty= \tilde\omega_{\infty}+\sqrt{-1}\partial\bar\partial \psi_\infty$$
  can be regarded as a  global current  and Monge-Amp\`ere
 measure  $ \omega_\infty^n$ is well-defined  on $\tilde M_\infty$.
 Thus (\ref{current-omega-1}) holds  for
 $\tilde \omega_\infty$ on
 $\tilde M_\infty$ as a current. This means that $K_{\tilde M_\infty}^{-l}$ is a restriction of the  line bundle
  $\mathcal {O}_{\mathbb CP^{N}}(1).$ Thus  $\tilde M_\infty$ is  a $Q$-Fano variety. In particular,   (\ref{volume-equiv})
  holds by (\ref{full-mass}).

 It remains to show that  $\tilde M_\infty$ has  $klt$- singularities.  Let $\tilde \theta_v$ and $\theta_v$ be the potentials of $v$ in (\ref{x-vector}) associated to metrics $\tilde\omega_\infty$ and  $\omega_\infty$, respectively. Then
  $$ \tilde \theta_v=  \theta_v-v(\psi_\infty) =h_\infty- v(\psi_\infty),~{\rm in}~ {\rm reg}(\tilde M_\infty),$$
  where $\psi_\infty$ is the solution of  (\ref{limit-equ}).
 On the other hand,  by (\ref{section-estiamte-1}),  we have
 $$\|\nabla\rho_l(M,\omega_i)\|_{\omega_i}\le C.$$
 It follows that
 $$|\nabla\psi_\infty|_{\omega_\infty}\le C.$$
 By the gradient estimate of $h_t$ in (\ref{h-t-estimate}), we also have
 $$ |\nabla h_\infty|_{\omega_\infty}\le C.$$
 Hence,
 \begin{align}\label{derivative} |v(\psi_\infty)|\le |\nabla\psi_\infty|_{\omega_\infty}^{\frac{1}{2}} |\nabla h_\infty|_{\omega_\infty}^{\frac{1}{2}}\le C,
 \end{align}
 and so,
  \begin{align}\label{x-potential}| \tilde \theta_v|\le C.
  \end{align}

 By (\ref{derivative}) and (\ref {x-potential}), we can use an argument in
   \cite[Proposition 3.8]{BBEGZ} to conclude that  a  $Q$-Fano variety, on which  there is a current solution of (\ref{current-omega-1} )  with full mass has klt singularities.
  In particular, $e^{\tilde h_\infty}\in L^p(\tilde M_\infty, \tilde\omega_\infty)$, for some $p>1$.

 \end{proof}

   Since
 $$e^{\tilde h_\infty-\psi_\infty-h_\infty}  \tilde\omega_{\infty}^n  $$
 is a Lebsegue measure,
 $  (\pi^*(\tilde\omega_{\infty})+\sqrt{-1}\partial\bar\partial \psi_\infty(\pi))^n$ can be extended to a global complex Monge-Amp\`ere
 measure  on  $\hat M_\infty$ such that
 \begin{align}
   (\pi^*(\tilde\omega_{\infty})+\sqrt{-1}\partial\bar\partial \psi_\infty(\pi))^n
   = e^{(\tilde h_\infty-\psi_\infty-h_\infty)(\pi)}  (\pi^*(\tilde\omega_{\infty}))^n= e^f \hat\omega^n,
   \notag
 \end{align}
 where $\hat\omega$ is a smooth K\"ahler metric and  $e^f$ is a $L^p$-function  ($p>1$)  on $\hat M_\infty$, respectively.
 By a result in \cite{EGZ}  (also see \cite[Lemma 3.6]{BBEGZ}),  we conclude  that $ \psi_\infty(\pi)$ is a continuous function on  $\hat M_\infty$.
 Thus  $\psi_\infty$  a continuous function on  $\tilde M_\infty$.   In subsection 4.2 below, we will give an alternative proof for the continuity of
   $\psi_\infty$ in case of $v=0$.

 The following shows that   the vector field $v$ in (\ref{x-vector})
 can be extended to a holomorphic  vector field in  $ \mathbb CP^N$.

 \begin{lem}\label{v-vaector}
    The vector field $v$ in (\ref{x-vector})
 can be extended to a holomorphic  vector field in  $ \mathbb CP^N$.

 \end{lem}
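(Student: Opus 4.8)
The plan is to show that the holomorphic vector field $v = g_\infty^{i\bar j}(h_\infty)_{\bar z^j}\frac{\partial}{\partial z^i}$, defined a priori only on ${\rm reg}(\tilde M_\infty)$, extends holomorphically to all of $\mathbb{CP}^N$. The central idea is that $v$ is generated by a one-parameter subgroup of $SL(N+1;\mathbb{C})$, which automatically acts on $\mathbb{CP}^N$. To see this, first I would recall from Lemma~\ref{limit-soliton} that $v$ is the soliton vector field, so it corresponds to a holomorphic automorphism of $\tilde M_\infty$ preserving the Kähler-Ricci soliton structure. By the general theory of Kähler-Ricci solitons (following \cite{TZ1}), $v$ lies in the Lie algebra of a reductive subgroup of ${\rm Aut}(\tilde M_\infty)$, and the imaginary part ${\rm Im}(v)$ generates a compact torus action.

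The key step is to use the polarization. Since $\tilde M_\infty$ is $Q$-Fano with $K_{\tilde M_\infty}^{-l}$ being the restriction of $\mathcal{O}_{\mathbb{CP}^N}(1)$ (established in Proposition~\ref{q-fan}), any holomorphic automorphism of $\tilde M_\infty$ preserving this polarization lifts to a linear action on $H^0(\tilde M_\infty, K_{\tilde M_\infty}^{-l})$, hence to an element of $PGL(N+1;\mathbb{C})$ acting on the ambient $\mathbb{CP}^N$. Concretely, I would argue that the flow $\sigma_\tau = \exp(\tau v)$ of $v$ preserves the soliton metric up to the Lie derivative relation, and in particular preserves $\tilde\omega_\infty = \frac{1}{l}\omega_{FS}|_{\tilde M_\infty}$ up to a Kähler potential. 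This means $\sigma_\tau$ acts on the space of holomorphic sections defining the embedding, giving a path $\tau \mapsto A(\tau) \in GL(N+1;\mathbb{C})$ whose derivative at $\tau = 0$ is a linear endomorphism $\hat v$ of $\mathbb{C}^{N+1}$. The induced vector field $\hat v$ on $\mathbb{CP}^N$ then restricts to $v$ on ${\rm reg}(\tilde M_\infty)$.

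For the gluing, I would verify that the restriction of $\hat v$ to ${\rm reg}(\tilde M_\infty)$ agrees with $v$ by matching their actions on the coordinate functions $s^\alpha_\infty/s^0_\infty$, using that $h_\infty$ is precisely the logarithm of a ratio of Bergman-type norms (from the construction in Corollary~\ref{h-limit} and the estimate $|\nabla h_\infty|_{\omega_\infty} \le C$ in Proposition~\ref{q-fan}). Since both $v$ and $\hat v|_{\tilde M_\infty}$ are holomorphic vector fields tangent to $\tilde M_\infty$ and they agree on the dense open set ${\rm reg}(\tilde M_\infty)$, they coincide there by analytic continuation, and $\hat v$ provides the desired extension.

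The main obstacle I anticipate is establishing that the flow of $v$ genuinely preserves the embedding data in a way that produces a \emph{linear} (rather than merely projective or only locally defined) action on $\mathbb{C}^{N+1}$. This requires controlling the behavior of the soliton automorphisms across the singular set ${\rm Sing}(\tilde M_\infty)$: one must rule out that the automorphism group fails to extend over the singularities. Here the $Q$-Fano structure with klt singularities from Proposition~\ref{q-fan} is essential, since it guarantees that $\tilde M_\infty$ is a projectively normal subvariety whose automorphisms (preserving $K_{\tilde M_\infty}^{-l}$) are all induced by linear transformations of the ambient projective space. The uniform gradient bound $|\nabla h_\infty|_{\omega_\infty} \le C$ will be used to show that the generated flow stays bounded and extends continuously, closing the argument.
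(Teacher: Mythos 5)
Your proposal has the right overall skeleton (flow of $v$ $\Rightarrow$ action on sections $\Rightarrow$ one-parameter subgroup of $SL(N+1,\mathbb C)$ $\Rightarrow$ differentiate), but it contains a genuine gap, and it is precisely the step you flag as ``the main obstacle'' and then dismiss. You begin by asserting that $v$ ``corresponds to a holomorphic automorphism of $\tilde M_\infty$'' and that, by the theory of \cite{TZ1}, $v$ lies in the Lie algebra of a reductive subgroup of ${\rm Aut}(\tilde M_\infty)$ with ${\rm Im}(v)$ generating a torus action. This cannot be invoked here: the uniqueness/reductivity theory of \cite{TZ1} is for compact smooth manifolds, whereas $v$ is at this stage only a holomorphic vector field on ${\rm reg}(\tilde M_\infty)$, produced as a limit in Lemma \ref{limit-soliton}. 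Its flow is a priori only a family of locally defined biholomorphisms of the regular part, and nothing yet says these maps extend to, or even stay inside, $\tilde M_\infty$ near ${\rm Sing}(\tilde M_\infty)$. The whole content of the lemma is exactly this extension problem. Your proposed resolution --- that the $Q$-Fano/klt structure ``guarantees'' that automorphisms preserving the polarization are induced by linear maps --- answers a different question: it linearizes automorphisms that are \emph{already} globally defined on $\tilde M_\infty$ (this part, via normality of the embedding, is indeed routine), but it does not show that the flow of $v$ gives such global automorphisms in the first place. As written, the argument is circular at its crucial point.

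The paper fills this gap with a concrete analytic mechanism that your proposal lacks. One takes $Y={\rm Im}(v)$ and its flow $\phi(t)$, defined on a neighborhood of $\overline{\tilde M_\infty\setminus T_\epsilon(\mathcal S)}$; since $Y$ is a Killing field for $\omega_\infty$ by (\ref{kr-soliton-limit}), $\phi(t)$ acts isometrically on the Hermitian metric $H_\infty$ on $K^{-l}_{{\rm Reg}(\tilde M_\infty)}$, so for any holomorphic section $\sigma$ the pullback $\phi_t^*(\sigma)$ stays \emph{bounded}. The partial $C^0$-estimate (\ref{partial-WZ-2}) supplies, near each piece $V_i$ of a cover of the singular set, a reference section $\sigma_i$ with $c\le|\sigma_i|_{H_\infty}\le c^{-1}$, so that $f_i=\phi_t^*(\sigma_i)/\sigma_i$ is a bounded holomorphic function off $T_\epsilon(\mathcal S)$. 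Then a slicing argument --- intersecting with generic linear subspaces $E$ of dimension $N-n+2$ so that $M_E=\tilde M_\infty\cap E$ is a normal surface meeting $\mathcal S$ in finitely many points --- allows Hartogs' extension theorem to extend $f_i$, hence $\phi_t^*(\sigma_i)$, across the singular set. Only after this does one conclude that $\phi_t$ lifts to an isomorphism of $H^0({\rm Reg}(\tilde M_\infty), K^{-l}_{{\rm Reg}(\tilde M_\infty)})$, i.e.\ is the restriction of an element of $SL(N+1,\mathbb C)$, and differentiation in $t$ extends $Y$ and hence $v$ to $\mathbb{CP}^N$. In short: the boundedness of pulled-back sections (from the Killing property plus the partial $C^0$-estimate) and the Hartogs slicing argument are the missing ideas; without them your ``derivative of the path $A(\tau)$'' is not even defined, because the path of linear maps $A(\tau)$ has not been constructed.
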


 \begin{proof}
 This  is in fact an application of Hartog's extension theorem as done in \cite{T2}.  For simplicity,   we let  $\mathcal S={\rm Sing}(\tilde M_\infty)$  and denote  $T_\epsilon(\mathcal S)$ to be an $\epsilon$-neighborhood of $\mathcal S$ in  $ \mathbb CP^N$.  Then there
 are finitely many open
 subsets $V_1,..., V_k$ of $\mathbb{C}P^N$ which covers   $\overline {T_\epsilon(\mathcal S)}$ so that  each $V_i$ is isometric to a ball in $\mathbb C^N$.

 Let  $\{s^\alpha_{t_{i}}\}$  be  a sequence of orthonormal  bases  of   $H^0(M,  K^{-l}_M, \omega_{t_i})$. Since  $|s^\alpha_{t_{i}}|_{h(\omega_{t_i})^{\otimes l}}$ is uniformly bounded,  by Proposition
  \ref{regularity-phi} and  (\ref{section-estiamte-1}),  it is easy to see that  each  $s^\alpha_{t_{i}}$ converges to a    holomorphic section  $s^\alpha_\infty$ of    $K^{-l}_{{\rm Reg}(\tilde M_\infty)}$. In fact, $\{s^\alpha_\infty\}$ becomes  an  orthonormal  basis   of   $H^0((\tilde M_\infty,  K^{-l}_{{\rm Reg}(\tilde M_\infty)})$ by (\ref{volume-convergence}) (to  see
  Remark \ref{relative-volume-convergence} below). Namely,
  $$(s^\alpha_\infty, s^\beta_\infty)=\int_{{\rm Reg}(\tilde M_\infty)} <s^\alpha_\infty, s^\beta_\infty>_{H_\infty}\omega_{\infty}^n=\delta_{\alpha}^\beta,$$
  where $H_\infty$ is the  induced Hermitian metric  by $\omega_\infty$ on $K_{{\rm Reg}(\tilde M_\infty)}^{-l}$.
  Moreover, by the partial $C^0$-estimate (\ref{partial-WZ}),  we have
  \begin{align}\label{partial-WZ-2}
 \rho_l(\tilde M_\infty,\omega_\infty)(x)=\Sigma_{i=0}^N \,|s_{\infty}^\alpha|^2_{H_\infty}(x)\ge A_0>0,~\forall~x\in {\rm Reg}(\tilde M_\infty).
 \end{align}
 Thus  for each $i$,  there is a section $\sigma_i$ in $H^0({\rm Reg}(\tilde M_\infty),  K^{-l}_{{\rm Reg}(\tilde M_\infty)})$ such that
 $$c\le |\sigma_i|_{H_\infty}\le c^{-1},~{\rm on}~\tilde M_\infty\cap V_i$$
  for some $c>0$ independent of $i$.

   Let    $Y$ be the imaginary part of $v$ .
 We integrate $Y$ to get a family of biholomorphic maps $\phi(t)$ from a neighborhood of $\overline{\tilde M_\infty\setminus T_\epsilon(\mathcal S)}$ into $\tilde M_\infty\setminus \mathcal S$, where $|t|\le \delta$ for some $\delta=\delta(\epsilon)>0.$
 Note that $\phi(0)=I$. Since $Y$ is a Killing field by  (\ref{kr-soliton-limit}),  whenever $\phi(t)$ is well-defined, it is an isometry of  $H_\infty$ on $K_{{\rm Reg}(\tilde M_\infty)}^{-l}$. Given any $\sigma\in H^0({\rm Reg}(\tilde M_\infty),  K^{-l}_{{\rm Reg}(\tilde M_\infty)}) $, $\phi_t^*(\sigma)$ is a bounded holomorphic section of $K_{{\rm Reg}(\tilde M_\infty)}^{-l}$ over $\tilde M_\infty\setminus T_\epsilon(\mathcal S)$. If $E$ is any subspace of $\mathbb CP^N$ of complex dimension $N-n+2$ with  (at most ) finite intersections with $\mathcal S$, then
 $M_E=\tilde M_\infty \cap E$  is a complex normal variety of complex dimension $2$ and $M_E\cap T_\epsilon(\mathcal S)$ is compact. For each $i$, $f_i=\frac{\phi_t^*(\sigma_i)}{\sigma_i}$ is a bounded holomorphic function on $(\tilde M_\infty\setminus T_\epsilon(\mathcal S))\cap V_i$, so by Hartog's extension theorem, $f_i$ extends to a  bounded holomorphic function on $M_E\cap V_i$. It follows that  $\phi_t^*(\sigma_i)$ extends to a  bounded holomorphic section of $ K_{{\rm Reg}(\tilde M_\infty)}^{-\ell}$ over $M_E\cap T_\epsilon(\mathcal S)$. Since $E$ is arbitrary,  we can easily deduce that $\phi_t^*(\sigma_i)$  extends to $\tilde M_\infty\cap T_\epsilon(\mathcal S)$.
 Thus $\phi_t$ lifts to an isomorphism of   $H^0({\rm Reg}(\tilde M_\infty),  K^{-l}_{{\rm Reg}(\tilde M_\infty)})$, or equivalently, $\phi_t$ is the restriction of an automorphism in $G=SL(N+1,\mathbb{C})$. Differentiating $\phi_t$ on $t$,  we see that $Y$, consequently, $v$ extends a holomorphic vector field on $\mathbb CP^N.$

 \end{proof}

 By Proposition \ref{q-fan} and Lemma \ref{v-vaector},  we introduce

 \begin{defi} Let  $(X, \omega_0)\subset \mathbb CP^N$ be   Q-Fano variety  with klt-singularities and  $v$ be  a holomorphic vector  field  on $ \mathbb CP^N$ which is tangent to $X$.  A current
 $$\omega=\omega_0+\sqrt{-1}\partial\bar\partial \varphi\in  [\omega_0]=2\pi c_1(X)$$
  is called a singular
  K\"ahler-Ricci  soliton  with respect to $v$ on   $X$ if   $\omega$  is smooth  on  ${\rm Reg}(X)$  with $L^\infty$  K\"ahler potential    $\varphi$ on $X$ such that
   it satisfies  K\"ahler-Ricci soliton equation on $X$ as a current,
 \begin{align}\label{singular-soliton}\mathrm{Ric}(\omega)-\omega= \sqrt{-1}\partial\bar\partial \theta_v=L_v\omega,
 \end{align}
 where $\theta_v$ is a bounded real potential of $v$ associated to $\omega$.

 \end{defi}

 Since $v(\varphi)$ is uniformly bounded in ${\rm Reg}(X)$,  one sees that  (\ref{singular-soliton}) is equivalent to a complex Monge-Amp\`ere equation,
 \begin{align}\label{soliton-2}
   (\omega_{0}+\sqrt{-1}\partial\bar\partial \varphi)^n
   = e^{h_0-\theta_0-v(\varphi)-\varphi }  \omega_{0}^n,
 \end{align}
 where  $\theta_0$ is a bounded potential of $v$ associated to $\omega_0$ and $h_0$ is a Ricci potential of $\omega_0$.
 By a   result of Berndtsson  \cite {Bo11} (also see \cite{BBEGZ}),    the uniqueness  theorem of  Tian-Zhu \cite{TZ1} for    K\"ahler-Ricci  solitons  can be generalized to   Q-Fano varieties  with klt-singularities.  Namely, if there are two solutions $\omega$ and $\hat \omega$  of (\ref{singular-soliton}), then there exists an element $\sigma\in {\rm Aut}_r(\tilde M_\infty)$ such that
 $$ \hat \omega=\sigma^*\omega,$$
 where  ${\rm Aut}_r(\tilde M_\infty)$ is the  reductive subgroup of ${\rm Aut}(\tilde M_\infty)$.

 \subsection{Singular structure of   $(X, d_\infty)$}  In this subsection, we study  the singular structure of   Gromov-Hausdroff  limit $(X, d_\infty)$ of   sequence of   K\"ahler metrics  $\eta_{t}$ on $M$  in  (\ref{modified-metric}).
 First we have

 \begin{lem}\label{eta-metric-local}Let $\{i\}$ be the sequence  in Proposition \ref{metric-equivalent}.  Then there is a uniform constant $C_\gamma'$ which depends
 only on  $\gamma$ such that
 \begin{align}\label{eta-metric}
 (C_\gamma')^{-1}\tilde\omega_{i}\le(\Phi_i^{-1})^*\eta_{t_i}\le C_\gamma' \tilde\omega_{i}, ~{\rm in}~\tilde \Omega_\gamma^i.
  \end{align}

 \end{lem}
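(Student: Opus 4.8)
The plan is to read (\ref{eta-metric}) off from Yau's second order (Laplacian) estimate for the complex Monge--Amp\`ere equation satisfied by $\eta_{t_i}$, transported into the Fubini--Study chart supplied by $\Phi_i$ and measured against the background $\tilde\omega_{i}$. The two ingredients that make this work are the uniform bound on the curvature of $\tilde\omega_{i}$ over $\tilde\Omega_\gamma^i$ (the geometry fixed in Section 3) and the metric comparison (\ref{c2-estimate}) already established for $\omega_{t_i}$ itself. Since $\tilde M_i=\Phi_i(M)$ is smooth for each fixed $i$, all objects on $\tilde\Omega_\gamma^i$ are smooth and the only issue is uniformity in $i$.

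First I would record three facts. Writing $\eta_{t_i}=\omega_{t_i}+\sqrt{-1}\partial\bar\partial\kappa_{t_i}$ and using (\ref{C^0}), pullback by $\Phi_i^{-1}$ gives
\[
(\Phi_i^{-1})^*\eta_{t_i}=\tilde\omega_{i}+\sqrt{-1}\partial\bar\partial\chi_i,\qquad \chi_i=(\kappa_{t_i}-\phi_i)\circ\Phi_i^{-1},
\]
and $\chi_i$ is uniformly $C^0$-bounded by (\ref{c0-psi-kappa}) and (\ref{c0-phi}). Next, pulling back the Monge--Amp\`ere equation (\ref{yau-equation}), using the $C^0$-bound on $h_{t_i}$ from Lemma \ref{lem:perelman-1} together with the volume comparison contained in (\ref{c2-estimate}), I obtain on $\tilde\Omega_\gamma^i$
\[
\bigl((\Phi_i^{-1})^*\eta_{t_i}\bigr)^n=F_i\,\tilde\omega_i^n,\qquad 0<c\le F_i\le C.
\]
Finally, since ${\rm Ric}(\eta_{t_i})=\omega_{t_i}$ by (\ref{modified-metric}) and the Ricci form is a biholomorphic invariant, ${\rm Ric}\bigl((\Phi_i^{-1})^*\eta_{t_i}\bigr)=(\Phi_i^{-1})^*\omega_{t_i}$, which by (\ref{c2-estimate}) is bounded above by $C_\gamma\tilde\omega_{i}$ on $\tilde\Omega_\gamma^i$.

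The core of the proof is the interior upper bound. Put $\eta_i'=(\Phi_i^{-1})^*\eta_{t_i}$. Because the bisectional curvature of $\tilde\omega_{i}$ is uniformly bounded on $\tilde\Omega_\gamma^i$ and ${\rm tr}_{\tilde\omega_{i}}{\rm Ric}(\eta_i')$ is bounded above by the previous paragraph, Yau's inequality yields, on $\tilde\Omega_\gamma^i$,
\[
\Delta_{\eta_i'}\log{\rm tr}_{\tilde\omega_{i}}\eta_i'\ \ge\ -C_1-C_2\,{\rm tr}_{\eta_i'}\tilde\omega_{i}.
\]
Applying the maximum principle to $\log{\rm tr}_{\tilde\omega_{i}}\eta_i'-A\chi_i$ against a cut-off supported in a slightly larger set $\tilde\Omega_{\gamma'}^i\supset\supset\tilde\Omega_\gamma^i$, and using $\Delta_{\eta_i'}\chi_i=n-{\rm tr}_{\eta_i'}\tilde\omega_{i}$ to absorb the term $C_2\,{\rm tr}_{\eta_i'}\tilde\omega_{i}$ once $A>C_2$, the $C^0$-bound on $\chi_i$ together with the volume bound on $F_i$ gives ${\rm tr}_{\tilde\omega_{i}}\eta_i'\le C_\gamma'$, i.e. the upper bound $\eta_i'\le C_\gamma'\tilde\omega_{i}$ on $\tilde\Omega_\gamma^i$. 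The lower bound then drops out of the volume normalization: if $\lambda_1,\dots,\lambda_n$ are the eigenvalues of $\eta_i'$ relative to $\tilde\omega_{i}$, then $\prod_j\lambda_j=F_i\ge c$ and each $\lambda_j\le C_\gamma'$, whence $\lambda_{\min}\ge c\,(C_\gamma')^{-(n-1)}$, giving $\eta_i'\ge (C_\gamma')^{-1}\tilde\omega_{i}$ after enlarging the constant.

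The main obstacle is the interior Laplacian estimate itself, specifically checking that every constant depends only on $\gamma$ and not on $i$; this rests entirely on the \emph{uniform} bisectional curvature bound for $\tilde\omega_{i}$ from Section 3 and the \emph{uniform} comparison (\ref{c2-estimate}). The cut-off and maximum-principle bookkeeping used to pass from a bound on the larger set $\tilde\Omega_{\gamma'}^i$ to one on $\tilde\Omega_\gamma^i$ is routine, and it is precisely this step that introduces the $\gamma$-dependence of $C_\gamma'$, exactly as in the proof of Lemma \ref{c3-phi}.
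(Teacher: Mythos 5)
Your proof is correct in substance, but it takes a genuinely different --- and much heavier --- route than the paper's. The paper's argument contains no second-order estimate at all: it writes the same Monge--Amp\`ere relation you derive, $\eta_{t_i}^n=(\tilde\omega_i+\sqrt{-1}\partial\bar\partial(\kappa_{t_i}+\psi_i^0))^n=e^{\tilde h_i-\psi_i^0}\tilde\omega_i^n$ on $\tilde\Omega_\gamma^i$, so that the volume ratio is pinched between positive constants by the boundedness of $\tilde h_i$ and (\ref{phi-s-c0}); but it then obtains a one-sided metric comparison directly from Bergman-kernel technology, namely the gradient estimate for the sections $\bar s_{t_i}^\alpha$ orthonormal with respect to $\eta_{t_i}$ together with the lower bound on $\rho_l(M,\eta_{t_i})$, transported from the $\bar\Phi_i$-embedding to the $\Phi_i$-embedding via the uniformly bounded matrices (\ref{matrix}) --- the same mechanism as in (\ref{lower-bound-metric}) and (\ref{upper-bound}). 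The remaining side then follows from the determinant pinching by exactly the eigenvalue argument you use at the end. In short: the paper uses a first-order section estimate plus linear algebra, while you use the Aubin--Yau Laplacian inequality plus a localized maximum principle plus linear algebra. Your route buys independence from the section estimates --- you need only the potential bounds, the density bounds, ${\rm Ric}(\eta_i')=(\Phi_i^{-1})^*\omega_{t_i}\le C_{\gamma}\tilde\omega_i$ from (\ref{c2-estimate}), and the uniform curvature of $\tilde\omega_i$ --- but it costs you the cutoff maximum principle, whose ``routine bookkeeping'' is not entirely free: you must choose a cutoff $\zeta$ with $|\nabla\zeta|^2/\zeta$ controlled so that the terms produced by differentiating $\zeta$ can be absorbed into the good term $(A-C_2){\rm tr}_{\eta_i'}\tilde\omega_i$, and the constant $C_1$ in your differential inequality requires the lower bound ${\rm tr}_{\tilde\omega_i}\eta_i'\ge n F_i^{1/n}\ge c>0$ (arithmetic--geometric mean plus the density lower bound), since the Ricci term enters as $-{\rm tr}_{\tilde\omega_i}{\rm Ric}(\eta_i')/{\rm tr}_{\tilde\omega_i}\eta_i'$. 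Both points are standard and fillable, so your proof goes through; it is simply far longer than the paper's three-line argument. (Incidentally, the paper states its one-sided bound as the upper bound (\ref{gradient-eta-metric}), although the section estimate, as in (\ref{upper-bound}), naturally yields $\tilde\omega_i\le C(\Phi_i^{-1})^*\eta_{t_i}$; either side suffices once the determinant is pinched.)
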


 \begin{proof}By (\ref{yau-equation}) and (\ref{potenial-equation}), we have
 \begin{align}\label{yau-equation-1}
 \eta_{t_i}^n= (\tilde \omega_i + \sqrt{-1} \partial\bar\partial (\kappa_{t_i}+\psi_i^0))^n=e^{\tilde h_i-\psi_i^0} \tilde\omega_{i}^n,  ~{\rm in}~\tilde \Omega_\gamma^i.
 \end{align}
 By the gradient estimate of $\bar s_{t_i}^\alpha$ and the lower bound of $\rho_l(M,\eta_{t_i})$  together with  (\ref{matrix}),  we know that
 \begin{align} \label{gradient-eta-metric}(\Phi_i^{-1})^*\eta_{t_i}\leq C \tilde \omega_i.
 \end{align}
 Thus by  (\ref{yau-equation-1})  and  the relation  (\ref{phi-s-c0}),  we also get
 $$C_\gamma^{-1}\tilde\omega_{i}\le(\Phi_i^{-1})^*\eta_{t_i},~{\rm in}~\tilde \Omega_\gamma^i.$$

 \end{proof}

 By  Proposition \ref{regularity-phi},  there is a  subsequence of $\psi_i^0$   (still denoted  by the same)  such that
   \begin{align}\label{c-infty-convrergence-eta} \psi_i^0((\Phi_i\Psi_\gamma^i)^{-1}) \to \psi_\infty^0, ~  {\rm on}~  {\rm Reg}(\tilde M_\infty),
   \end{align}
 in  local $C^\infty$-topology   as $i,\gamma\to\infty$, where  $\psi_\infty^0$ is  smooth  uniformly bounded on
   ${\rm Reg}(\tilde M_\infty)$.
 On the other hand,  by Lemma  \ref{eta-metric-local}, we can  apply the regularity for  uniformly elliptic equations  to (\ref{yau-equation-1}) to see  that
 $$\| (\kappa_{t_i} +\psi_i^0)\|_{C^{k,\alpha} (\Omega')}\le C_\gamma( d(\Omega'), k), ~\forall ~k, \Omega'\subset\tilde {\Omega}_\gamma^i. $$
   Thus    there is a  subsequence of $\kappa_{t_i}$   (still denoted  by the same)  such that
   \begin{align}\label{kappa-conv} \kappa_{t_i} \to  \kappa_\infty ~  {\rm on}~  {\rm Reg}(\tilde M_\infty)
   \end{align}
 in  local $C^\infty$-topology   as $i\to\infty$,  and  $( \kappa_\infty+\psi_\infty^0)$
    satisfies  complex Monge-Amp\`ere equation,
   \begin{align}\label{yau-equation-eta}
 \eta_{\infty}^n=(\tilde\omega_\infty+ \sqrt{-1}\partial\bar\partial(\kappa_\infty +\psi_\infty^0))^n= e^{\tilde h_\infty-\psi_\infty^0}  \tilde\omega_{\infty}^n.~{\rm Reg}(\tilde M_\infty) .
 \end{align}
 Moreover,  by (\ref{c0-psi-kappa}),  $(\kappa_\infty +\psi_\infty^0)$ can be extended to a $L^\infty$-solution on  $\tilde M_\infty$.

 The following proposition  improves the regularity of  limit $(X, d_\infty)$ in Theorem \ref{normal} for the sequence $\{\eta_i\}$.

 \begin{prop}\label{corollary-eta} Let $\eta_{t_i}$  be a  subsequence  of K\"ahler metrics on $M$  as in (\ref{eta-metric}) and  $(X, d_\infty)$  the Gromov-Hausdroff  limit of $\eta_{t_i}$ as in  Theorem \ref{normal}. Then  the completion   $\overline{({\rm Reg}(\tilde M_\infty), \eta_\infty)}$   of $({\rm Reg}(\tilde M_\infty), \eta_\infty)$ is isometric to  $(X, d_\infty)$.
 Moreover, $X$ is homeomorphic to $\tilde M_\infty $ and
 \begin{align}\label{singular-cod}{\rm codim}( {\rm Sing}(X, d_\infty))\geq4.
 \end{align}

 \end{prop}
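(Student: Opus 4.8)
The plan is to construct a single map $F\colon X\to \tilde M_\infty$ that serves simultaneously as the homeomorphism and, after restriction to the two regular loci, as the isometry, and then to extract the codimension bound from the complex-analytic structure of $\tilde M_\infty$. For the homeomorphism, recall from Proposition \ref{q-fano-limit} that $\tilde M_\infty = A\circ W$, where $W\subset \mathbb CP^N$ is the cycle limit of the embeddings $\bar\Phi_i$ defined by orthonormal bases of $H^0(M,K_M^{-l},\eta_{t_i})$ and $A\in SL(N+1;\mathbb C)$. Applying Theorem \ref{normal} to the sequence $(M,\eta_{t_i})$ shows that the Gromov-Hausdorff limit $(X,d_\infty)$ is homeomorphic to $W$, and by Remark \ref{LS-theorem} this homeomorphism is realized by the limit map $\bar\Phi_\infty\colon X\to W$. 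Since $A$ restricts to a biholomorphism $W\to \tilde M_\infty$, the composition $F:=A\circ\bar\Phi_\infty$ is a homeomorphism $X\to \tilde M_\infty$, which already yields the second assertion.

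For the isometry I would first upgrade the Gromov-Hausdorff convergence to smooth Cheeger-Gromov convergence over ${\rm Reg}(\tilde M_\infty)$. This is exactly what (\ref{kappa-conv}), (\ref{yau-equation-eta}) and Lemma \ref{eta-metric-local} provide: on each $\tilde\Omega^i_\gamma$ the pulled-back metrics $(\Phi_i^{-1})^*\eta_{t_i}$ are uniformly equivalent to $\tilde\omega_i=\frac1l\omega_{FS}|_{\tilde M_i}$ and converge, through the diffeomorphisms $\Phi_i\Psi^i_\gamma$, in local $C^\infty$-topology to $\eta_\infty=\tilde\omega_\infty+\sqrt{-1}\partial\bar\partial(\kappa_\infty+\psi^0_\infty)$. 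In particular $F$ carries ${\rm Reg}(\tilde M_\infty)$ into the metric-regular set of $(X,d_\infty)$, and on this open dense set $d_\infty$ is induced by the smooth metric $\eta_\infty$. The point is then to show that the intrinsic length distance of $\eta_\infty$ on ${\rm Reg}(\tilde M_\infty)$ coincides with the restriction of $d_\infty$. Here I would invoke the Cheeger-Colding structure theory for the non-collapsed limit $(X,d_\infty)$, whose sequence has ${\rm Ric}(\eta_{t_i})=\omega_{t_i}\ge -\eta_{t_i}$: the singular set has Hausdorff codimension at least $2$, hence measure zero and no distance-shortcuts, so minimizing geodesics between regular points are approximable by curves lying in ${\rm Reg}$ and no distance is lost across ${\rm Sing}$. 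Consequently the metric completion $\overline{({\rm Reg}(\tilde M_\infty),\eta_\infty)}$ is isometric to $(X,d_\infty)$ through $F$.

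With the isometry in hand the codimension estimate follows from the complex-analytic picture. Since the smooth convergence above takes place over the analytic regular locus, every analytic-regular point is metric-regular, so ${\rm Sing}(X,d_\infty)\subset F^{-1}({\rm Sing}(\tilde M_\infty))$. As $\tilde M_\infty$ is a normal variety (Proposition \ref{q-fano-limit}), ${\rm Sing}(\tilde M_\infty)$ has complex codimension at least $2$, hence $\omega_{FS}$-Hausdorff dimension at most $2n-4$ by analyticity. The global bound $\eta_\infty\le C\tilde\omega_\infty\le \frac{C}{l}\omega_{FS}$, coming from the uniform gradient estimate (\ref{gradient-eta-metric}), shows that the identity from $(\tilde M_\infty,\omega_{FS})$ to $(\tilde M_\infty,d_\infty)$ is locally Lipschitz and therefore does not increase Hausdorff dimension. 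Thus the $d_\infty$-Hausdorff dimension of ${\rm Sing}(\tilde M_\infty)$ is also at most $2n-4$, and a fortiori that of ${\rm Sing}(X,d_\infty)$, which gives ${\rm codim}({\rm Sing}(X,d_\infty))\ge 4$.

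The hard part will be the isometry, specifically the identification of $d_\infty$ on ${\rm Reg}(\tilde M_\infty)$ with the intrinsic $\eta_\infty$-distance: one must rule out that geodesics of $(X,d_\infty)$ joining regular points are forced through ${\rm Sing}$, so that completing the smooth regular piece genuinely reconstructs the Gromov-Hausdorff limit rather than a quotient with extra identifications. This rests on the codimension-$\ge 2$ estimate for the singular set together with the almost-everywhere geodesic convexity of the regular part from Cheeger-Colding theory, applied to the non-collapsed family $(M,\eta_{t_i})$; once this is secured, both the homeomorphism and the codimension bound are comparatively formal.
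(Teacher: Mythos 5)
Your homeomorphism step (Theorem \ref{normal}, Remark \ref{LS-theorem}, composed with the matrix $A$ from Proposition \ref{q-fano-limit}) is exactly the paper's argument. The isometry and codimension steps, however, take a different route from the paper, and each contains a genuine gap.

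For the isometry, your ``no-shortcut'' argument conflates two different singular sets. Cheeger--Colding theory controls the \emph{metric} singular set $\mathcal S=X\setminus \mathcal R$ (points without Euclidean tangent cones), whereas the smooth Cheeger--Gromov convergence gives you length control only on the \emph{analytic} regular locus $R:=F^{-1}({\rm Reg}(\tilde M_\infty))$, where $\eta_\infty$ is actually defined. The inclusion you assert, ``analytic-regular implies metric-regular'', is $R\subset\mathcal R$; but the shortcut argument needs the \emph{reverse} inclusion $\mathcal R\subset R$: without it, a $d_\infty$-geodesic between points of $R$ that avoids $\mathcal S$ may still pass through $\mathcal R\setminus R$, where you have no metric $\eta_\infty$ and hence no comparison of lengths. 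That reverse inclusion is precisely the nontrivial half of the paper's claim (\ref{regular-part-R}), proved there via Proposition 2.4 of \cite{LS}, which you never invoke; in addition, ``a.e.\ geodesic convexity of $\mathcal R$'' is itself heavy machinery and only yields almost-everywhere statements. The paper avoids this circle entirely by a volume argument: the $L^\infty$-bound on the potential $\kappa_\infty+\psi^0_\infty$ gives full Monge--Amp\`ere mass $\int_{\tilde M_\infty}\eta_\infty^n=V$, hence the volume convergence (\ref{volume-convergence-eta}); if the closure of ${\rm Reg}(\tilde M_\infty)$ missed an open set $U\subset X$, then balls approximating $U$ would be disjoint from sets carrying volume tending to $V$, contradicting the non-collapsing bound ${\rm vol}(B_r)\ge cr^{2n}$ of \cite{ZhK}. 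This elementary contradiction is what actually forces $\overline{({\rm Reg}(\tilde M_\infty),\eta_\infty)}=(X,d_\infty)$.

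For the codimension bound, your Lipschitz transfer does not work as stated. The estimate $\eta_\infty\le C\tilde\omega_\infty$ coming from (\ref{gradient-eta-metric}) holds only on ${\rm Reg}(\tilde M_\infty)$; it bounds $\eta_\infty$-lengths of paths \emph{inside the regular part} by their Fubini--Study lengths, but says nothing about $d_\infty$-distances between points of, or near, ${\rm Sing}(\tilde M_\infty)$. To conclude that $F^{-1}:(\tilde M_\infty,\omega_{FS})\to(X,d_\infty)$ is locally Lipschitz across the singular set you would either have to presuppose the completion/no-shortcut statement you are in the middle of proving (circular), or pass the global bounds on $\tilde M_i$ to a limit of intrinsic distances under cycle convergence --- a nontrivial step you do not address, and one that is delicate because intrinsic distances need not behave semicontinuously and normal varieties need not have intrinsic distance locally comparable to ambient distance. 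The paper's mechanism is entirely different and never uses normality: by (\ref{regular-part-R}) one has ${\rm Sing}(X,d_\infty)=\bar\Phi_\infty^{-1}({\rm Sing}(\tilde M_\infty))\subset\mathcal S_{2n-2}$; codimension-$2$ and $3$ strata are excluded by \cite{CCT}; and $\mathcal S_{2n-2}\setminus\mathcal S_{2n-4}=\emptyset$ (equation (\ref{2n-2-singular})) because any point whose tangent cone splits off $\mathbb{R}^{2n-2}$ is analytically regular by Proposition 3.2 of \cite{LS}, hence metrically regular. Then ${\rm Sing}(X,d_\infty)=\mathcal S_{2n-4}$ and Theorem 4.7 of \cite{CC} gives (\ref{singular-cod}). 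So both of your key steps would need the very propositions from \cite{LS} that the paper quotes, and in their absence neither step closes.
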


 \begin{proof} Since
  $(\kappa_\infty+\psi_\infty^0)$ is a  $L^\infty$ K\"ahler potential  on $\tilde M_\infty$ as $\psi_\infty$  in Proposition \ref{q-fan},  as in (\ref{volume-equiv}), we have
  $$\int_{\tilde M_\infty} \eta_\infty^n=\int_{M} \omega_0^n=V.$$
  Namely, $\psi_\infty^0$ has  full mass.   Thus     by (\ref{c-infty-convrergence-eta}) and (\ref{kappa-conv}),  we obtain
 \begin{align}\label{volume-convergence-eta}\int_{(\Phi_i^{-1} \Psi_\gamma^i)(\Omega_{\gamma})} \eta_{t_i}^n \to V,~{\rm as}~i,\gamma\to\infty.
 \end{align}

 Note that  $({\rm Reg}(\tilde M_\infty),  \eta_\infty)$ is  an open set of  smooth part of the Gromov-Hausdroff  limit    $(X,  d_\infty)$  of $(M, \eta_{t_i})$.   Then by taking  any Cauchy  sequence in ${{\rm Reg}(\tilde M_\infty, \eta_\infty)}$ one can complete it so that its  completion $\overline {{\rm Reg}(\tilde M_\infty, \eta_\infty)}$ is a subset of  $(X,  d_\infty)$.  Suppose that
 \begin{align}\label{two-space-neq-1}\overline {{\rm Reg}(\tilde M_\infty, \eta_\infty)}\subset\subset (X,   d_\infty),
  \end{align}
  which means that  there is an open set $U \subset (X,   d_\infty)$ such that
  $$\overline{ U}\cap \overline {{\rm Reg} (\tilde M_\infty, \eta_\infty)}=\emptyset.$$
   It follows that  there is a sequence of $r$-geodesic balls $B_r(\omega_{t_i})\subset (M, \omega_{t_i})$ which converges to an open set  $V\subset\overline{ U}$ in  Gromov-Hausdroff topology.
  Thus for any fixed $\gamma$, there is an $i_\gamma$ such that
  $$(\Phi_i^{-1} \Psi_\gamma^i)(\Omega_{\gamma}) \subset \tilde M_i\setminus B_r(\omega_{t_i}),~\forall ~i\ge i_\gamma.$$
  Hence, by  (\ref{volume-convergence-eta}),  we derive
  \begin{align}\label{full-volume}\lim_i  \int_{\tilde M_i\setminus B_r(\omega_{t_i})}  \eta_{t_i}^n=V.
  \end{align}
  On the other hand, by Zhang's result \cite{ZhK},
  $${\rm vol}_{\eta_{t_i}}( B_r(\eta_{t_i}))\ge cr^{2n}, $$
 which implies that
  \begin{align}
 {\rm vol}_{\eta_{t_i}}(\tilde M_i\setminus B_r(\eta_{t_i}))\le V-cr^{2n}.\notag
 \end{align}
 But  this  is  a contradiction with  (\ref{full-volume})!    Therefore,   it must be
 \begin{align}\label{two-space-1}\overline {{\rm Reg}(\tilde M_\infty, \eta_\infty)}=(X,  d_\infty).
  \end{align}

 $X$ is homeomorphic to $\tilde M_\infty $ by Theorem \ref{normal} (also see Remark \ref{LS-theorem}).  In fact, as in the of Proposition \ref{q-fano-limit},
 the Kodaira embedding   $\bar \Phi_i$  of $M$ given   by the orthonormal basis $\{\bar s_{t_i}^\alpha\}$ of $H^0(M, K_M^{-l}, \eta_{t_i})$
  are uniformly Lipschitz by the gradient estimate of  $\bar s_{t_i}^\alpha $ as in (\ref{section-estiamte-1}).   Thus  we have a limit map $\bar\Phi_\infty$  which gives   a homeomorphism from $(X,d_\infty)$ to $(\tilde M_\infty,\tilde \omega_\infty)$.

 We are left to prove (\ref{singular-cod}).   Denote the regular part of $(X,d_\infty)$ by $\mathcal R$ which consists of  points with flat  tangent cones   \cite{CC}.  We claim  that
  \begin{align}\label{regular-part-R}\bar\Phi_\infty(\mathcal R)={\rm Reg}(\tilde M_\infty).
  \end{align}

  By Proposition 2.4 in \cite{LS}, for any point $x\in \mathcal R$, there is a neighborhood $U_x$ around $x$ such that
  the image $\bar\Phi_\infty(U_x)\subset {\rm Reg}(\tilde M_\infty)$.  In particular,  we get $\bar \Phi_\infty(x)\in {\rm Reg}(\tilde M_\infty)$. On the other hand, for any $p\in {\rm Reg}(\tilde M_\infty)$, choose a small convex ball $B_r(p,\eta_\infty)$ inside the open Riemannian manifold $({\rm Reg}(\tilde M_\infty),\eta_\infty)$. Then for $x=\bar\Phi_\infty^{-1}(p)$,    $B_r(p,\eta_\infty)$ is isometric to
   $B_r(x, d_\infty)$ as a length space. As a consequence, we have $x\in \mathcal R$. Thus
  (\ref{regular-part-R}) is true.

  By  (\ref{regular-part-R}) and the homomorphism $\bar \Phi_\infty$,  we have
  $${\rm Sing}(X,  d_\infty)=\bar\Phi_\infty^{-1}({\rm Sing}(\tilde M_\infty))\subset \mathcal S_{2n-2},$$
   where $\mathcal S_k$ is the stratification of ${\rm Sing}(X, d_\infty)$ in \cite{CC}.
 It remains to show that
 \begin{align}\label{2n-2-singular}\mathcal S_{2n-2}\setminus \mathcal S_{2n-4}=\emptyset.
 \end{align}
 Note that $\mathcal S_{2n-3}\setminus \mathcal S_{2n-4}=\emptyset$ \cite{CCT}.   By definition,     for any $x\in \mathcal S_{2n-2}\setminus \mathcal S_{2n-4}$,  there are  two sequences $\{\epsilon_i\}$ and $\{r_i \}$ to $0$ such that $r$-distance ball  $B_x(r_i)$ around $x$ satisfies
 $${\rm dist}_{GH}(B_x(r_i), B_V(o, r_i))\le r_i\epsilon_i,$$
 where $B_V(o, r)$ is a $r$-ball of metric cone $(V,o)$ splitting off $\mathbb R^{2n-2}$.  On the other hand, by Proposition 3.2 in \cite{LS}, we know that
 there exists a small ball  $B_x(r_i)$ such that  $\bar\Phi_\infty(B_x(r_i) )\subset {\rm Reg}(\tilde M_\infty)$.   In particular,   $\bar \Phi_\infty(x)\in {\rm Reg}(\tilde M_\infty)$. Thus $x\in \mathcal R$ by  (\ref{regular-part-R}) and the homomorphism $\bar\Phi_\infty$, and so   (\ref{2n-2-singular}) must be true.  As a sequence,
 ${\rm Sing}(X, d_\infty)=\mathcal S_{2n-4}$. Hence,   by Theorem 4.7 in \cite{CC}, we prove
 $${\rm codim}( {\rm Sing}(X, d_\infty))\geq4.$$

 \end{proof}

 \subsection{Proof of Theorem \ref{WZ}}  We are ready  to prove Theorem \ref{WZ}. First we prove the convergence of $\omega_{t_i}$.

 \begin{lem}\label{same-KE}Let    $\omega_\infty$  be   the   singular  K\"ahler-Ricci  soliton on $\tilde M_\infty$
  in  Lemma  \ref{limit-soliton}.     Then  $\omega_{t_i}$ (perhaps after  taking a subsequence)  converges to  $\sigma_0^*\omega_\infty$  on $\tilde M_\infty$ for some  $\sigma_0\in {\rm Aut}_r(\tilde M_\infty)$.
 \end{lem}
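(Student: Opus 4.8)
The plan is to run the same compactness arguments on the unshifted metrics $\omega_{t_i}$ that Lemma \ref{limit-soliton} ran on the shifted ones, to show that the resulting limit is again a singular K\"ahler--Ricci soliton on $\tilde M_\infty$, and then to appeal to the uniqueness theorem for such solitons to produce $\sigma_0$. First I would record that Proposition \ref{regularity-phi} gives uniform local $C^{k,\alpha}$-bounds, independent of $i$, for $\psi_i^s$ over all $s\in[-\frac{1}{2},1]$ on each $\tilde\Omega_\gamma^i$, and that the estimate (\ref{f-ck}) together with Lemma \ref{mini} supplies the corresponding bounds for $f_{t_i+s}$, whose derivation is valid for all $s\in[0,1]$, not only for $s\in[\frac{1}{2},1]$. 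Passing to a diagonal subsequence (along which also $s_i\to s_\infty$), the pulled-back metrics $((\Phi_i\Psi_\gamma^i)^{-1})^*\omega_{t_i+s}$ converge in $C^\infty_{\rm loc}({\rm Reg}(\tilde M_\infty))$ to a family $\omega_\infty^s$, with $f_{t_i+s}\to f_\infty^s$ and $h_{t_i+s}\to h_\infty^s$ locally uniformly in $s\in[0,1]$; the family solves the K\"ahler--Ricci flow on ${\rm Reg}(\tilde M_\infty)$, and its value $\omega_\infty^{s_\infty}$ at $s_\infty=\lim_i s_i$ is exactly the soliton $\omega_\infty$ of Lemma \ref{limit-soliton}. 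At $s=0$ the limit $\omega_\infty^0=\lim((\Phi_i\Psi_\gamma^i)^{-1})^*\omega_{t_i}$ extends, by the arguments of Proposition \ref{q-fan}, to a current in $2\pi c_1(\tilde M_\infty)$ with $L^\infty$-potential and full mass.

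The crucial step is to upgrade $\omega_\infty^0$ from a flow to a \emph{soliton}: Lemma \ref{limit-soliton} yields the soliton identity only at the special times $t_i+s_i$, where the entropy gradient is small, so at $s=0$ it is not automatic. To obtain it I would integrate (\ref{derivative-lambda}) over the whole interval; since $\lambda(g_t)$ is bounded and non-decreasing,
\[
\int_{t_i}^{t_i+1}\!\!\int_M\bigl(|\partial\bar\partial(h_t+f_t)|^2+|\partial\partial f_t|^2\bigr)\,\omega_{g_t}^n\,dt=\lambda(g_{t_i+1})-\lambda(g_{t_i})\longrightarrow0 .
\]
Passing to the limit with Fatou's lemma on compact subsets of ${\rm Reg}(\tilde M_\infty)$ forces $\partial\bar\partial(h_\infty^s+f_\infty^s)=0$ and $\partial\partial f_\infty^s=0$ for almost every $s\in[0,1]$, hence for every such $s$ by continuity of the smooth limit in $s$. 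In particular $\omega_\infty^0$ satisfies the soliton equation (\ref{RicciSolitons}), so by the reasoning already used for $\omega_\infty$ (see (\ref{x-vector})--(\ref{kr-soliton-limit}) and Proposition \ref{q-fan}) it is a singular K\"ahler--Ricci soliton on $\tilde M_\infty$ in the sense of the Definition; its vector field is the same canonical $v$ of (\ref{x-vector}), since by Tian--Zhu the soliton vector field is determined by the complex variety $\tilde M_\infty$ and lies in the center of the reductive part of ${\rm Aut}(\tilde M_\infty)$.

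Finally I would compare $\omega_\infty^0$ and $\omega_\infty$: both lie in $2\pi c_1(\tilde M_\infty)$ and both solve (\ref{singular-soliton}) for the same $v$. The uniqueness theorem for singular K\"ahler--Ricci solitons on $Q$-Fano varieties with klt singularities (the extension of Tian--Zhu \cite{TZ1} via Berndtsson \cite{Bo11}, see also \cite{BBEGZ}) then produces $\sigma_0\in{\rm Aut}_r(\tilde M_\infty)$ with $\omega_\infty^0=\sigma_0^*\omega_\infty$, and transporting the $C^\infty_{\rm loc}$-convergence back through the embeddings gives the stated convergence of $\omega_{t_i}$. I expect the main obstacle to be precisely the soliton property at $s=0$: one must convert the time-averaged decay of the entropy gradient into a pointwise soliton identity for the limiting flow, where the uniformity in $s$ of the estimates and the full-mass/$L^\infty$ structure of Proposition \ref{q-fan} (needed so that the uniqueness theorem even applies) are essential.
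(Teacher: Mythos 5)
Your strategy is viable and differs from the paper's in one genuine respect. The paper never proves directly that the $s=0$ limit is a soliton: instead, exactly as in Lemma \ref{limit-soliton}, for every $\epsilon_0>0$ it picks good times $s_i\in[-\epsilon_0,0]$ at which the spatial integral in (\ref{derivative-lambda}) is small, obtains a soliton limit $\bar\omega_\infty$, invokes the Berndtsson/Tian--Zhu uniqueness to write $\bar\omega_\infty=\sigma^*\omega_\infty$ with the uniform bound ${\rm dist}(\sigma,{\rm Id})\le C_0$ of (\ref{simga-bounded}), and then uses the uniform bounds on $\dot\psi_i^s$ (equicontinuity in $s$) together with a diagonal argument in $\epsilon_0\to 0$ to identify the limit of $\omega_{t_i}$ with $\sigma_0^*\omega_\infty$. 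You instead pass to a limit of the whole flow on $[0,1]$, prove the soliton identity at (almost) every $s$ by a space--time Fatou argument, and apply uniqueness only once at $s=0$; if carried out, this avoids the paper's compactness argument for the automorphisms $\sigma_{\epsilon_0}$, which is a real simplification.

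There is, however, a concrete gap in your execution: you assert $f_{t_i+s}\to f_\infty^s$ \emph{locally uniformly in} $s\in[0,1]$, and later pass from ``a.e.\ $s$'' to ``every $s$'' by ``continuity of the smooth limit in $s$'', applied to identities that involve $f_\infty^s$. The available estimates --- Lemma \ref{mini} and (\ref{f-ck}) --- are purely spatial, uniform in $s$; they provide no modulus of continuity of $f_{t_i+s}$ in the $s$-variable. Since $f_t$ is a minimizer of $\mathcal W(g_t,\cdot)$, minimizers need not be unique and nothing controls $\partial_s f_t$, so neither the claimed uniform-in-$s$ convergence nor the $s$-continuity of $f_\infty^s$ is justified; Arzel\`a--Ascoli in the $s$-direction simply has no input. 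The gap is repairable within your scheme: from $\int_0^1\frac{d}{ds}\lambda(g_{t_i+s})\,ds\to 0$ and (\ref{derivative-lambda}) you get $L^1([0,1])$-convergence of $a_i(s)=\int_M\bigl(|\partial\bar\partial(h_{t_i+s}+f_{t_i+s})|^2+|\partial\partial f_{t_i+s}|^2\bigr)\omega^n_{t_i+s}$ to zero, hence a subsequence and a full-measure set $S\subset[0,1]$ with $a_i(s)\to 0$ for each $s\in S$; at each fixed $s\in S$, spatial compactness alone (the argument of Lemma \ref{limit-soliton} verbatim) yields, as in (\ref{h-f}), that $f_\infty^s=-h_\infty^s+{\rm const}$ and $\partial\partial f_\infty^s=0$, i.e.\ $\partial\partial h_\infty^s=0$. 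This last condition is the soliton condition and involves only $(\omega_\infty^s,h_\infty^s)$, which \emph{do} depend continuously on $s$ thanks to the bounds on $\dot\psi_i^s$ from (\ref{c3-dot}) and its higher-order analogue; letting $s\to 0$ along $S$ gives the soliton identity at $s=0$, after which Proposition \ref{q-fan} and the uniqueness theorem of \cite{Bo11}, \cite{TZ1}, \cite{BBEGZ} finish as you indicate. Finally, note that both you and the paper apply the uniqueness theorem to two solitons whose vector fields are not a priori identical; the full statement (uniqueness modulo ${\rm Aut}_r$, with conjugate vector fields) covers this, so your treatment of that point is at the same level of rigor as the paper's.
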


 \begin{proof}As in the proof of  Lemma \ref{limit-soliton},  for any $\epsilon_0>0$,
  there is a sequence of $s_i\in [-\epsilon_0, 0]$ such that
  $((\Phi_i\cdot\Psi_\gamma^i)^{-1})^*\omega_{t_i+s_i}$ converges to a  K\"ahler-Ricci soliton  $\bar\omega_\infty$ on  ${\rm Reg}(\tilde M_\infty)$ in $C^\infty$-topology.   Moreover, according to the proof of Proposition \ref{q-fan}, $\bar\omega_\infty$ is a   singular K\"ahler-Ricci soliton   on  $\tilde M_\infty$ with
  \begin{align}\label{volume-bar}
  \int_{\tilde M_\infty}\bar\omega_\infty^n=V.
  \end{align}
 Thus by   the uniqueness  result of Berndtsson  for  singular  K\"ahler-Ricci solitons  \cite {Bo11},
 $ \bar\omega_\infty=\sigma^*\omega_\infty$ for some  $\sigma\in {\rm Aut}_r(\tilde M_\infty)$.
 Since $ ((\Phi_i \Psi_\gamma^i)^{-1})^*\omega_{t_i+s_i}$ is locally $C^k$-uniformly bounded,  we  see that there exists a uniform constant $C_0$ such that
 \begin{align}\label{simga-bounded}{\rm dist}( \sigma, {\rm Id})\le C_0.
 \end{align}

 On the other hand,  by   (\ref{equation-flow}) together with (\ref{c3-dot}), we have
 $$\|\dot \psi_i^s\|_{C^{k,\alpha}(\tilde\Omega_\gamma^i) }\le C_k(\gamma),~\forall ~s\in [-\frac{1}{2}, 1].$$
 Thus the  K\"ahler potential of $ ((\Phi_i \Psi_\gamma^i)^{-1})^*\omega_{t_i+s_i}$  and its derivatives with any orders  are  locally uniformly continuous.  Hence, by taking  a diagonal subsequence of  $\omega_{t_i+s_i}$ with $s_i\to 0$, we get
 from (\ref{simga-bounded}),
 \begin{align} \lim_{i_k} ((\Phi_{i_k} \Psi_\gamma^{i_k})^{-1})^*\omega_{t_{i_k}}= \lim_{i_k} ((\Phi_{i_k} \Psi_\gamma^{i_k})^{-1})^*\omega_{t_{i_k}+s_{i_k}},  ~\forall~
 \Omega_\gamma\subset \tilde M_\infty, \notag
 \end{align}
 and
 $((\Phi_{i_k} \Psi_\gamma^{i_k})^{-1})^*\omega_{t_{i_k}+s_{i_k}}$
  converges to  $\sigma_0^*\omega_\infty$  for  some  $\sigma_0\in {\rm Aut}_r(\tilde M_\infty)$  on  ${\rm Reg}(\tilde M_\infty)$ in $C^\infty$-topology.

 \end{proof}

   \begin{proof}[Proof of Theorem \ref{WZ}]
   The local  $C^\infty$-convergence of   $\omega_{t_i}$ to   $\omega_\infty$ on  ${\rm Reg}(\tilde M_\infty)$  follows from
   Lemma  \ref{same-KE} with   $\omega_\infty$ replaced by $\sigma_0^*\omega_\infty$.
      In fact, there is a subsequence of $i$  ( still denoted by $i$ for simplicity) such that
   \begin{align}\label{c-infty-convrergence}((\Phi_{i} \Psi_\gamma^{i})^{-1})^*\omega_{t_{i}}\to \omega_\infty,  {\rm on}~  {\rm Reg}(\tilde M_\infty),
   \end{align}
   as $i,\gamma\to\infty$.
   Since $({\rm Reg}(\tilde M_\infty),  \omega_\infty)$ is  an open set of  smooth part of the Gromov-Hausdroff  limit    $(M_\infty,  \omega_\infty')$  of $(M, \omega_{t_i})$,    the completion $\overline {{\rm Reg}(\tilde M_\infty)}$ of  $({\rm Reg}(\tilde M_\infty),  \omega_\infty)$ is contained in  $(M_\infty,  \omega_\infty')$.
 On the other hand,  by (\ref{volume-equiv}),  we have
  $$\int_{ \tilde M_\infty} \omega_\infty^n=\int_{\tilde M_\infty} \tilde\omega_\infty^n=V.
 $$
 Thus   by (\ref{c-infty-convrergence}), we get
 \begin{align}\label{volume-convrrgence-m}\int_{(\Phi_i^{-1} \Psi_\gamma^i)(\Omega_{\gamma})} \omega_{t_i}^n \to V,~{\rm as}~i,\gamma\to\infty.
 \end{align}

 Suppose that
 \begin{align}\label{two-space-neq}\overline { ({\rm Reg}(\tilde M_\infty), \omega_\infty)}\subset\subset (M_\infty,  \omega_\infty').
  \end{align}
  Then there is an open set $U \subset (M_\infty,  \omega_\infty')$ such that
  $$\overline{ U}\cap \overline { ({\rm Reg}(\tilde M_\infty), \omega_\infty)}=\emptyset.$$
  Thus as in the proof of  Corollary \ref{corollary-eta},
   there is a sequence of $r$-geodesic balls $B_r(\omega_{t_i})\subset (M, \omega_{t_i})$  such that
     $$\lim_i  \int_{\tilde M_i\setminus B_r(\omega_{t_i})}  \omega_{t_i}^n=V.$$
  However, by the Perelaman's  non-collapsed property in   Lemma \ref{lem:perelman-1}-1),  we have
 \begin{align}\label{less-volume}
 {\rm vol}_{\omega_{t_i}}(\tilde M_i\setminus B_r(\omega_{t_i}))\le V-cr^{2n}.
 \end{align}
 Therefore, we obtain a contradiction from the above two relations and prove that
 \begin{align}\label{two-space}\overline {({\rm Reg}(\tilde M_\infty, \omega_\infty))}=(M_\infty,  \omega_\infty').
  \end{align}

  In case that $\omega_\infty$ is a singular  K\"ahler-Einstein metric on   $\tilde M_\infty$,  by the relation  (\ref{yau-equation-eta}), we get
  $${\rm Ric}(\eta_\infty)=\omega_\infty={\rm Ric}(\omega_\infty),  ~{\rm on}~{\rm Reg}(\tilde M_\infty).$$
 Moreover, both of $\eta_\infty$ and $\omega_\infty$ are  satisfied  globally as a current  in  $[\tilde \omega_\infty]$ on $\tilde M_\infty$
 by  the following complex Monge-Amp\`ere equation,
  $$\eta_\infty^n=\omega_\infty^n =e^f\tilde\omega_\infty^n,$$
   where $e^f$ is a $L^p$-function  ($p>1$)  with respect to $\tilde\omega_\infty$.
  Thus by the uniqueness  of solutions of complex Monge-Amp\`ere equation \cite{BT, EGZ},
  $$\omega_\infty=\eta_\infty,  ~{\rm on}~\tilde M_\infty.$$
  Hence,  by  (\ref{two-space-1}) and  (\ref{two-space}), we prove
 \begin{align}\label{3-topology}(M_\infty, \omega_\infty')=\overline {({\rm Reg}(\tilde M_\infty), \omega_\infty)}=(X,d_\infty).
 \end{align}
 By  Proposition \ref{corollary-eta},  we also get
 \begin{align}\label{singular-cod-m-infty}{\rm codim}( {\rm Sing}(M_\infty, \omega_\infty'))\geq4.
 \end{align}

 By (\ref{c0-psi})  and (\ref{section-estiamte-2}),   it is easy to see that the potential $\psi_\infty$ of $\omega_\infty$ can be extended to a
  continuous function on $(M_\infty, \omega_\infty')$.   Thus by    Proposition \ref{corollary-eta} together with (\ref{3-topology}),   the extended $\psi_\infty$  is also  a  continuous function on $\tilde M_\infty$.  The proof is complete.

   \end{proof}

 \begin{rem}By Theorem \ref{WZ},   the Gromov-Hausdorff  limit $ (M_\infty, \omega_\infty')$ of $\omega_{t_i}$  is a  K\"ahler-Ricci soliton on the open set  $ {\rm Reg}(\tilde M_\infty)$. Moreover, if   $\omega_\infty'$ is a  K\"ahler-Einstein metric  on
  $ {\rm Reg}(\tilde M_\infty)$,
 $${\rm Sing}(M_\infty, \omega_\infty')={\rm Sing}(\tilde M_\infty, \tilde\omega_\infty)
 ~{\rm and}~\rm {codim}( {\rm Sing}(M_\infty, \omega_\infty'))\geq4.$$
 It is still unknown whether the Hausdroff measure of  codimension $4$  of ${\rm Sing}(M_\infty, \omega_\infty')$ is finite or not.
 \end{rem}

 \begin{rem}\label{relative-volume-convergence} By (\ref{two-space}),  we  introduce  the relative volume of any set $U$ in $(M_\infty,  \omega_\infty')$ by
  $${\rm vol}(U)=\int_{{\rm Reg}(\tilde M_\infty) \cap U} (\omega_\infty')^n.$$
  Then by  the local convergence of  ${\omega_{t_i}}$,  it is easy to see that for any
  $$(U_i, \omega_{t_i})\to (U,\omega_\infty')$$
  in Gromov-Hausdroff topology, it holds
  \begin{align}\label{volume-convergence}
  {\rm vol}_{\omega_{t_i}}(U_i) \to {\rm vol}(U).
  \end{align}
 \end{rem}

  The relation (\ref{volume-convergence}) will be used in  Subsection 4.3.

   \subsection{Proof of  Corollary \ref{WZ-2}}

 Recall  the Mabuchi $K$-energy,
  $$\mu(\varphi)
  =-\frac {n}{ V}\int_0^1\int_M
 \dot\psi ({\rm R}(\omega_{\psi})-n)
 \omega_{\psi}^{n} \wedge dt,
 $$
  where $\psi=\psi_t$ $(0\le t\le 1)$ is a  path  of K\"ahler potentials in $2\pi c_1(M)$  connecting $0$ to $\varphi$.
  In case that $\mu(\cdot)$  is bounded below,  it was proved that $f_t$ in (\ref{f-estimate}) satisfies   \cite[Proposition 4.2]{TZZZ},
   \begin{align}\label{f-0}\|f_t\|_{C^0(M)}\to 0, ~{\rm as}~t\to\infty.
   \end{align}

   \begin{defi}[\cite{Ti97}]    Let     $M$ be a  Fano manifold.  Denote  $F_{M}(v)$ to  be Ding-Tian's generalized Futaki-invariant
  for $C^*$-action $G_0$ induced by $v\in sl(N+1, \Bbb C)$ with  a $Q$-Fano variety   as its center fiber.
  $M$ is called

 i)  $K$-semistability if $F_{M}(v)\ge 0$ for any $v$;

 ii)  $K$-stability if $F_{M}(v)> 0$ for any $v$;

 iii)  $K$-polystability if $F_{M}(v)\ge 0$  for  any $v$  and $"="$  holds if and only if
 $v$ lies  in  the Lie algebra of  ${\rm Aut}(M).$

 \end{defi}

 \begin{proof}[Proof of  Corollary \ref{WZ-2}]By the stability theorem \cite{Zhu}, it suffices to prove that there are  sequences of
  $\{\omega_{t_i}\}$  in  (\ref{kr-flow})  and  automorphisms $\Psi_i$ in ${\rm Aut}(M)$  such that $\Psi_i^*\omega_{t_i}$  converges smoothly  to a
   K\"ahler-Einstein  metric on  $M$.  However,  By Theorem \ref{WZ},   any sequences $\{\omega_{t_i}\}$ (perhaps after taking a subsequence)  converges  to a singular  K\"ahler-Ricci soliton  $\omega_\infty $ on  a $Q$-Fano variety  $\tilde M_\infty$ with klt-singularities.
   By Lemma \ref{limit-soliton},
      together with (\ref{h-f}) and (\ref{f-0}),
  we see that  $\omega_\infty$   is in fact a singular  K\"ahler-Einstein metric on  $\tilde M_\infty$.  Thus by Tian's generalized Matsushima theorem \cite{T2},
  $\tilde M_\infty$ must be reductive.
   As a consequence, by  a well-known GIT  result  in algebraic geometry, there exists a $C^*$-action  $G_0=\{\sigma_t(v)\}\subset SL(N+1, \Bbb C)$   such that $\sigma_t(v)$ degenerates to $\tilde M_\infty$ as $t\rightarrow \infty$.
 Since $\tilde M_\infty$ is a  K\"ahler-Einstein variety,   Ding-Tian's generalized Futaki-invariant $F_{M}(v)$ will vanish for the action  $G_0$ \cite{DT}.

  We claim that  $\tilde M_\infty$ is  biholomorphic to $M$.  Then by the regularity of singular  K\"ahler-Einstein metrics  (cf. \cite{JWZ, BBEGZ}),   $\omega_\infty$ is in fact  a  smooth K\"ahler-Einstein  manifold  on $M$ and  the corollary follows.   On contrary,   the  $C^*$-action  $G_0$ will be  nontrivial, i.e, $G_0$
  is not a one-parameter  subgroup in ${\rm Aut}(M)$. Thus
   by  the condition of $K$-stability,  we have
  $$F_M(v)> 0.$$
  But this is impossible!    Corollary \ref{WZ-2}  is proved.

  \end{proof}

  \subsection{Computation of  $L(g)$} In this subsection, we give another application of Theorem \ref{WZ} to the computation of
   energy level $L(g)$ of  Perelman's entropy along a   K\"{a}hler-Ricci flow.

   \begin{defi} [\cite{TZ4}]  Let  $(M,\omega_t)$ be a    K\"{a}hler-Ricci flow in  (\ref{kr-flow}) with initial metric  $\omega_0=\omega_g$.  The energy level $L(g)$ of entropy
 $\lambda(\cdot)$ for    $(M,\omega_t)$   is
 defined by
 \begin{equation*}
 L(g)=\lim_{t\rightarrow\infty}\lambda(g_t).
 \end{equation*}
 \end{defi}

 By the monotonicity of $\lambda(g_t)$, we see that $L(g)$ exists and
 it is finite.   We shall  give an explicit  computation of  $L(g)$.

  By (\ref{kr-soliton-limit}),  $h_\infty$ is a potential of holomorphic vector field $v$.  As in (\ref{singular-soliton}),  we write it by $\theta_v$. Then
  by the estimate (\ref{f-estimate}) in Lemma \ref{mini},   we have
 \begin{align}\label{c2-theta}\|\theta_v\|_{C^0}+\|\nabla \theta_v\|_{\omega_\infty}+\|\triangle_{\omega_\infty}  \theta_v\|_{C^0}\le C.
 \end{align}
 We will normalize it by
 $$\int_{\tilde M_\infty}  e^{\theta_v}\omega_\infty^{n}=\int_M \omega_0^n=V.
 $$

  We recall a notation  in \cite{TZZZ} by
  $$
 N_v(\omega_\infty)=\int_{\tilde M_\infty}\theta_ve^{\theta_v}\omega_{\infty}^{n}.
 $$
  By  Jensen's inequality, it is easy to see that
 $$N_v(\omega_\infty)\ge 0$$
 and $"="$ holds
  if and only if  $v=0$.

  \begin{lem}\label{entropy-computation} Let $ (\omega_\infty, v)$ be  a singular K\"ahler-Ricci soliton limit  of    (\ref{kr-flow}) with   an initial metric  $\omega_g$  in  Lemma \ref{mini}. Then
  \begin{align}\label{lambda-quantity}
  L(g)= (2\pi)^{-n}[nV-N_v(\omega_\infty)].
  \end{align}

  \end{lem}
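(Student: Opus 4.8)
The plan is to identify $L(g)$ with the value of Perelman's $\mathcal W$-functional at the limiting soliton, and then to evaluate that value by a soliton trace identity followed by an integration by parts across the singular set. Since $\lambda(g_t)$ is non-decreasing and bounded, $L(g)=\lim_{t\to\infty}\lambda(g_t)$ exists and may be computed along any sequence; I take the sequence $t_i+s_i$ of Lemma \ref{limit-soliton}, along which $((\Phi_i\Psi_\gamma^i)^{-1})^*\omega_{t_i+s_i}$ converges in local $C^\infty$ on ${\rm Reg}(\tilde M_\infty)$ to $\omega_\infty$. Writing $f_t$ for the minimizer of $\mathcal W(g_t,\cdot)$, the bounds of Lemma \ref{mini} together with the higher-order estimate (\ref{f-ck}) give local $C^\infty$ subconvergence $f_{t_i+s_i}\to f_\infty$ on ${\rm Reg}(\tilde M_\infty)$; by (\ref{h-f}) the limit is $f_\infty=-h_\infty+\mathrm{const}=-\theta_v$, the constant being fixed by the normalization $\int e^{-f_\infty}\omega_\infty^n=\int e^{\theta_v}\omega_\infty^n=V$. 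Since the integrand $(R+|\nabla f|^2+f)e^{-f}$ is uniformly bounded along the flow (scalar curvature is controlled because $R_t-n=\Delta_{\omega_t}h_t$ with $\|\Delta h_t\|_{C^0}\le C$, and $f_t$ obeys Lemma \ref{mini}), local smooth convergence on each $\Omega_\gamma$ combined with the volume convergence (\ref{volume-convergence}) lets me pass to the limit and conclude $L(g)=\mathcal W(\omega_\infty,f_\infty)$.

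Next I evaluate $\mathcal W(\omega_\infty,-\theta_v)$ directly. Substituting $f_\infty=-\theta_v$ and using $\int\theta_v e^{\theta_v}\omega_\infty^n=N_v(\omega_\infty)$,
\begin{align*}
\mathcal W(\omega_\infty,-\theta_v)=(2\pi)^{-n}\Bl\int_{\tilde M_\infty}(R+|\nabla\theta_v|^2)e^{\theta_v}\omega_\infty^n-N_v(\omega_\infty)\Br.
\end{align*}
Tracing the soliton equation (\ref{singular-soliton}) gives $R=n+\Delta_{\omega_\infty}\theta_v$ on ${\rm Reg}(\tilde M_\infty)$, and the pointwise identity $\Delta_{\omega_\infty}(e^{\theta_v})=e^{\theta_v}(\Delta_{\omega_\infty}\theta_v+|\nabla\theta_v|^2)$ yields
\begin{align*}
\int_{\tilde M_\infty}(R+|\nabla\theta_v|^2)e^{\theta_v}\omega_\infty^n=n\int_{\tilde M_\infty}e^{\theta_v}\omega_\infty^n+\int_{\tilde M_\infty}\Delta_{\omega_\infty}(e^{\theta_v})\,\omega_\infty^n=nV+\int_{\tilde M_\infty}\Delta_{\omega_\infty}(e^{\theta_v})\,\omega_\infty^n.
\end{align*}
Thus the lemma reduces to the vanishing of the last integral, after which $\mathcal W(\omega_\infty,-\theta_v)=(2\pi)^{-n}[nV-N_v(\omega_\infty)]$, as claimed.

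The main obstacle is justifying the integration by parts $\int_{\tilde M_\infty}\Delta_{\omega_\infty}(e^{\theta_v})\,\omega_\infty^n=0$, which on a smooth closed manifold is immediate but here must be carried out across ${\rm Sing}(\tilde M_\infty)$. I would use a cut-off argument exploiting ${\rm codim}\,{\rm Sing}(\tilde M_\infty)\ge 4$ (Proposition \ref{corollary-eta} and Theorem \ref{WZ}): since the singular set has vanishing $2$-capacity, there exist cut-offs $\chi_\epsilon$ equal to $1$ outside an $\epsilon$-neighborhood of the singular set, compactly supported in ${\rm Reg}(\tilde M_\infty)$, with $\int|\nabla\chi_\epsilon|^2\,\omega_\infty^n\to 0$ as $\epsilon\to 0$. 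Writing $\Delta_{\omega_\infty}(e^{\theta_v})\,\omega_\infty^n=n\sqrt{-1}\,\partial\bar\partial(e^{\theta_v})\wedge\omega_\infty^{n-1}=d\bl n\sqrt{-1}\,\bar\partial(e^{\theta_v})\wedge\omega_\infty^{n-1}\br$ and integrating against $\chi_\epsilon$, Stokes' theorem on the smooth locus gives
\begin{align*}
\int_{\tilde M_\infty}\chi_\epsilon\,\Delta_{\omega_\infty}(e^{\theta_v})\,\omega_\infty^n=-\int_{\tilde M_\infty}d\chi_\epsilon\wedge n\sqrt{-1}\,\bar\partial(e^{\theta_v})\wedge\omega_\infty^{n-1},
\end{align*}
which is bounded by $C\,\|\nabla(e^{\theta_v})\|_{\infty}\,\bl\int|\nabla\chi_\epsilon|^2\,\omega_\infty^n\br^{1/2}V^{1/2}$. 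The uniform bounds (\ref{c2-theta}) on $\theta_v$ and $\nabla\theta_v$ keep $e^{\theta_v}$ and $\nabla(e^{\theta_v})$ bounded, so the right-hand side tends to $0$; since $\Delta_{\omega_\infty}(e^{\theta_v})$ is integrable by (\ref{c2-theta}) and $\chi_\epsilon\to 1$ almost everywhere, the left-hand side converges to $\int_{\tilde M_\infty}\Delta_{\omega_\infty}(e^{\theta_v})\,\omega_\infty^n$. Letting $\epsilon\to 0$ forces this integral to vanish, which completes the proof of (\ref{lambda-quantity}).
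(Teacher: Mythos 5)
Your proof follows essentially the same route as the paper's: identify $L(g)$ with $\mathcal W(\omega_\infty,-\theta_v)$ using the higher-order estimate (\ref{f-ck}), the convergence of minimizers to $-\theta_v$, and the volume convergence (\ref{volume-convergence}), then evaluate $\mathcal W(\omega_\infty,-\theta_v)=(2\pi)^{-n}[nV-N_v(\omega_\infty)]$ via the trace identity $R=n+\Delta_{\omega_\infty}\theta_v$ and the Stokes formula $\int_{\tilde M_\infty}(\Delta\theta_v+|\nabla\theta_v|^2)e^{\theta_v}\omega_\infty^n=0$. The only difference is that you justify this Stokes identity across ${\rm Sing}(\tilde M_\infty)$ by an explicit cut-off/capacity argument, a point the paper asserts without further comment.
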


  \begin{proof}
  We note by  (\ref{c2-theta}) that
  $$\mathcal W(\omega_\infty,  -\theta_v)=(2\pi)^{-n}\int_{\tilde M_\infty}  (  R(\omega_\infty)+  |\nabla \theta_v|^2-\theta_v)e^{\theta_v} \omega_\infty^n$$
 is well-defined,
 where $R(\omega_\infty)= n+ \triangle_{\omega_\infty}  \theta_v$.
 By  stoke's formula,
 $$\int_{\tilde M_\infty}(\Delta
 \theta_{v}+|\nabla
 \theta_{v}|^{2})e^{\theta_{v}}\omega_{\infty}^{n}=0.$$
 Thus
 \begin{eqnarray*}
  \mathcal W(\omega_\infty,  -\theta_v)=(2\pi)^{-n}[nV-N_v(g)].
  \end{eqnarray*}
  On the other hand, by (\ref{f-ck}) together with the relation (\ref{volume-convergence}), it is easy to see that
  $$ \lim_{t_i}\lambda(g_{t_i})=\mathcal W(\omega_\infty,  -\theta_v).$$
  Hence  by the monotonicity   of $\lambda(g_t)$,     we get (\ref{lambda-quantity}).

  \end{proof}

  The following proposition give a weak rigidity  for the limit of  K\"ahler-Ricci flow   (\ref{kr-flow}).

  \begin{prop}\label{two-equiv}Let   $(M,\omega_t)$ be  a K\"ahler-Ricci flow   (\ref{kr-flow}) with   an initial metric  $\omega_g$.  Then the  following  two statements  are true.

 1)  There is a  sequence of  $(M,\omega_t)$   such that the limit  is a  singular  K\"ahler-Einstein metric (or K\"ahler-Ricci soliton)   in the Hamilton-Tian conjecture  if and only if any  limit   in  $(M,\omega_t)$  is a  singular  K\"ahler-Einstein metric (or K\"ahler-Ricci soliton).

  2)  If   there is a  sequence of  $(M,\omega_t)$     such that the limit  is a  singular  K\"ahler-Einstein metric    in the Hamilton-Tian conjecture, then
  \begin{align}\label{KE-entropy}\sup\{\lambda(g')|~\omega_{g'}\in 2\pi c_1(M,J)\}=(2\pi)^{-n}V.
 \end{align}

  \end{prop}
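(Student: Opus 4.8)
The plan is to turn the monotone entropy level $L(g)=\lim_{t\to\infty}\lambda(g_t)$ into an invariant that detects the type of the limit. By the monotonicity (\ref{derivative-lambda}), $L(g)$ is the monotone limit (equivalently, the supremum) of the increasing family $\lambda(g_t)$, hence it does not depend on which subsequence $t_i\to\infty$ is used to extract a geometric limit $(\omega_\infty,v)$. On the other hand, Lemma \ref{entropy-computation} evaluates this same number through any singular K\"ahler-Ricci soliton limit as $L(g)=(2\pi)^{-n}[nV-N_v(\omega_\infty)]$. Comparing the two descriptions will force $N_v(\omega_\infty)=nV-(2\pi)^{n}L(g)$ to be the \emph{same} for every limit, and the rigidity $N_v(\omega_\infty)\ge 0$ with equality iff $v=0$ (Jensen's inequality) then transfers the type of one limit to all of them.

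For part 1), I would first recall that by Theorem \ref{WZ} every subsequential limit is a singular K\"ahler-Ricci soliton, which is K\"ahler-Einstein precisely when its vector field $v$ vanishes. Assume one sequence gives a K\"ahler-Einstein limit, so $v=0$ and $N_v(\omega_\infty)=0$; then $L(g)=(2\pi)^{-n}nV$ by Lemma \ref{entropy-computation}. Applying (\ref{lambda-quantity}) to an arbitrary limit $(\omega_\infty',v')$ gives $(2\pi)^{-n}[nV-N_{v'}(\omega_\infty')]=L(g)=(2\pi)^{-n}nV$, whence $N_{v'}(\omega_\infty')=0$ and $v'=0$, so that limit is K\"ahler-Einstein as well; the converse implication is trivial. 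The soliton alternative follows by the identical argument after replacing the conditions $v=0,\ N_v(\omega_\infty)=0$ by $v\ne 0,\ N_v(\omega_\infty)>0$, since $N_{v'}(\omega_\infty')$ is pinned to the common value $nV-(2\pi)^{n}L(g)$.

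For part 2), the K\"ahler-Einstein hypothesis again yields $L(g)=(2\pi)^{-n}nV$ via Lemma \ref{entropy-computation} with $N_v(\omega_\infty)=0$. I would obtain the upper bound $\sup\{\lambda(g')\}\le (2\pi)^{-n}nV$ by running the flow (\ref{kr-flow}) from an arbitrary $\omega_{g'}\in 2\pi c_1(M,J)$: monotonicity gives $\lambda(g')\le \lim_{t}\lambda(g'_t)=L(g')$, while (\ref{lambda-quantity}) applied to this flow gives $L(g')=(2\pi)^{-n}[nV-N_{v'}(\omega_\infty')]\le(2\pi)^{-n}nV$ because $N_{v'}(\omega_\infty')\ge 0$. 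The matching lower bound comes from the original flow, whose metrics $g_t$ all lie in $2\pi c_1(M,J)$ and satisfy $\sup_{g'}\lambda(g')\ge \sup_t\lambda(g_t)=L(g)=(2\pi)^{-n}nV$; the two estimates together establish (\ref{KE-entropy}).

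I expect the only genuinely delicate point to be the first one: that $N_v(\omega_\infty)$ is independent of the chosen limit. This rests on reading $L(g)$ simultaneously as the flow-intrinsic monotone limit of $\lambda(g_t)$ and as the value $\lim_{t_i}\lambda(g_{t_i})=\mathcal W(\omega_\infty,-\theta_v)$ produced along a particular subsequence, the latter identification being legitimate only because the uniform estimates of Lemma \ref{mini} (and the local regularity behind Lemma \ref{entropy-computation}) make $\mathcal W(\omega_\infty,-\theta_v)$ well defined and equal to the limiting entropy.
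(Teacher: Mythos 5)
Your proposal is correct, and for part 1) it is exactly the paper's argument: monotonicity of $\lambda(g_t)$ makes $L(g)$ subsequence-independent, Lemma \ref{entropy-computation} then pins $N_v(\omega_\infty)=nV-(2\pi)^nL(g)$ to a common value for every limit, and the Jensen rigidity ($N_v\ge 0$ with equality iff $v=0$) transfers the Einstein/soliton dichotomy to all limits. Where you diverge is the upper bound in part 2): the paper gets $\sup_{g'}\lambda(g')\le \mathcal W(\omega_{g'},0)$ in one line, simply by testing the infimum defining $\lambda(g')$ with the admissible function $f=0$ (which satisfies the normalization (\ref{norm-1})), so that the bound is the total scalar curvature $(2\pi)^{-n}\int_M R\,\omega_{g'}^n$, a cohomological constant. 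You instead run the flow from $\omega_{g'}$, invoke Theorem \ref{WZ} to produce a soliton limit $(\omega_\infty',v')$, and use $L(g')=(2\pi)^{-n}[nV-N_{v'}(\omega_\infty')]\le(2\pi)^{-n}nV$. Your route is valid (Theorem \ref{WZ} and Lemma \ref{entropy-computation} apply to an arbitrary initial metric in $2\pi c_1(M,J)$), but it deploys the full convergence machinery where a trivial test-function estimate suffices; the paper's bound is the cleaner and more robust one. A side benefit of your version: your consistent use of $(2\pi)^{-n}nV$ agrees with Lemma \ref{entropy-computation} and with the direct computation $\mathcal W(\omega_{g'},0)=(2\pi)^{-n}\int_M R\,\omega_{g'}^n=(2\pi)^{-n}nV$, which exposes an internal inconsistency in the paper, where (\ref{KE-entropy}) and its proof write $(2\pi)^{-n}V$ (the factor $n$ is evidently dropped, or absorbed by an unstated renormalization).
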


  \begin{proof} 1).   By Theorem \ref{WZ},   any sequence $\{\omega_{t_i}\}$ (perhaps after taking a subsequence)    in  $(M,\omega_t)$ converges  to a singular  K\"ahler-Ricci soliton  $(\omega_\infty, v) $ on  a $Q$-Fano variety  $\tilde M_\infty$ with klt-singularities.  By  (\ref{lambda-quantity}) in   Lemma \ref{entropy-computation},    $L(g)$ is independent of choice of sequences.   In other words,
   $ N_v(\omega_\infty)$ is  independent of  limit $\omega_\infty$.   Moreover,   $ N_v(\omega_\infty)=0$  if and only if  $\omega_\infty$ is a   singular  K\"ahler-Einstein metric. Hence, 1) is true.

  2).  In case that there is a   limit  of   singular  K\"ahler-Einstein  metric   ($M_\infty, \omega_\infty)$ for some  sequence  of  $(M,\omega_t)$,   by Lemma \ref{entropy-computation},
  $$ \lim_t\lambda(g_t)=(2\pi)^{-n}V.$$
  Note that
  $$\lambda(g')\le  \mathcal W(\omega_{g'},  0)=(2\pi)^{-n}V, ~\forall ~\omega_{g'}\in 2\pi c_1(M, J).$$
  Thus we get (\ref{KE-entropy}) immediately.

    Based on Proposition  \ref{two-equiv},  we propose the following conjecture.

    \begin{conj}\label{semi-stable-entropy} A Fano  manifold is  $K$-semistable if and only if    (\ref{KE-entropy}) holds.

    \end{conj}

  Conjecture  \ref{semi-stable-entropy} gives an analytic character for  a $K$-semistable Fano manifold in terms of Perelaman's entropy.
  We note that the necessary part is true according to the proof in Corollary \ref{WZ-2} since  the $K$ semi-stability
  is equivalent to the lower bound of $K$-energy by a theorem of Li-Sun \cite{LS}.

  \end{proof}

 \end{document}